\definecolor{battleshipgrey}{rgb}{0.52, 0.52, 0.51} 
\theoremstyle{plain}
\newtheorem{theorem}{Theorem}[section]
\newtheorem{lemma}[theorem]{Lemma}
\newtheorem{definition}[theorem]{Definition}
\theoremstyle{remark}
\newtheorem{remark}{Remark}[section]
\newtheorem{example}{Example}[section]
\newtheorem*{notation}{Notation}
\newtheorem*{acknowledgment}{Acknowledgment}
\numberwithin{equation}{section}
\newcommand{\bA}{\mathbb{A}}
\newcommand{\K}{\mathbb{K}}
\newcommand{\R}{\mathbb{R}}
\newcommand{\Z}{\mathbb{Z}}
\newcommand{\N}{\mathbb{N}}
\newcommand{\cP}{{\mathcal P}}
\newcommand{\g}{\mathfrak{g}}
\newcommand{\ttt}{\mathbf{t}}
\newcommand{\vvv}{\mathbf{v}}
\newcommand{\pG}{\mathsf{PG}}
\newcommand{\sfn}{\mathsf{n}}
\newcommand{\sfone}{\mathsf{1}}
\newcommand{\sftwo}{\mathsf{2}}
\newcommand{\Diff}{\mathrm{Diff}}
\newcommand{\opp}{\mathrm{opp}}
\newcommand{\Sec}{\mathrm{Sec}}
\newcommand{\Bis}{\mathrm{Bis}}
\newcommand{\im}{\mathrm{im}}
\newcommand{\id}{\mathrm{id}}
\newcommand{\fin}{\mathrm{fin}}
\newcommand{\pr}{\mathrm{pr}}
\newcommand{\Bij}{\mathrm{Bij}}
\newcommand{\sett}[1]{{\{ #1 \}}}
\newcommand{\settt}[1]{{\overline{\{ #1 \}}}}
\newcommand{\Bigsetof}[2]{\begin{Bmatrix} #1 \,\Big|\, #2 \end{Bmatrix}}
\newcommand{\ssk}{\smallskip}
\newcommand{\nin}{\noindent}
\newcommand{\ol}{\overline}
\begin{document}

\title[Lie Calculus]{Lie Calculus} 

\author{Wolfgang Bertram}

\address{Institut \'{E}lie Cartan de Lorraine \\
Universit\'{e} de Lorraine at Nancy, CNRS, INRIA \\
B.P. 70239 \\
F-54506 Vand\oe{}uvre-l\`{e}s-Nancy Cedex, France}

\email{\url{wolfgang.bertram@univ-lorraine.fr}}

\subjclass[2010]{
18F15,  	
20L05,  	
22E65,  	
39A12,       
58A05,  	
58C20,    
97I40}  	

\keywords{(conceptual, topological) differential calculus, groupoids, higher algebra
($n$-fold groupoids), Lie group, Lie groupoid, tangent groupoid, cubes of rings}

\begin{abstract} 
We explain that general differential calculus and Lie theory have a common foundation:
{\em Lie Calculus} is differential calculus, seen from the point of view of Lie theory, by making use of the
{\em groupoid} concept as link between them. Higher order theory naturally involves
{\em higher algebra} ($n$-fold  groupoids). 
 \end{abstract}

\maketitle

\setcounter{tocdepth}{1}

\section*{Introduction}

When working on the foundations of differential calculus (in chronological order, 
\cite{BGN04, Be08, Be13, Be15, Be15b}), I got the impression that there ought to exist a comprehensive algebraic 
theory, englobing both the fundamental results of calculus and of differential
geometry, and where {\em Lie theory} is a kind of
Ariadne's thread. Confirming this impression, {\em groupoids} turned out, in my most recent approach
\cite{Be15, Be15b}, to be the most remarkable algebraic structure underlying calculus. 
These groupoids are in fact {\em Lie groupoids}, and Lie theoretical features can be used even before starting
to develop Lie theory properly. In this sense, Lie theory and the development of ``conceptual'' calculus go hand in
hand, whence the term ``Lie Calculus'' chosen here. 
There are many similarities with the approach by {\em synthetic differential geometry}\footnote{ cf.\ \cite{Ko10,
MR91};  see Subsection \ref{ssec:SDG} at the end of this paper.},
and, of course, with the ideas 
present in Charles Ehresmann's \oe uvre (cf.\ \cite{KPRW07} for an overview): in a sense, I simply propose
to apply his ideas not only to differential geometry, but already to calculus itself. 
The reader certainly realizes that this sounds like a big program, and indeed the present short text, though entirely 
self-contained, is far from giving
a final and complete exposition of these ideas. I hope to have time and occasion to develop them in more length and
depth in some not too distant future. 

\ssk
Lie Calculus, as understood here,  can be cast in  three formulae.
We consider functions $f:U \to W$, where
$U$ is an (open) subset in a $\K$-vector space 
$V$.  The first formula defines the {\em first extended domain} of $U$:
\begin{equation}\label{1}
U^{[1]} := \{ (x,v,t) \in V \times V \times \K \mid \, x \in U, x+tv \in U \} .
\end{equation}
The second formula goes with Theorem \ref{th:tangentgroupoid} saying
that the pair of sets 
\begin{equation}\label{2}
U^\sett{1}:= (U^{[1]} , U \times \K) ,
\end{equation}
with source $\alpha$ and target $\beta$, units, 
 product and inversion defined as in the theorem,
is a {\em groupoid}.
The third formula describes the ``iteration'' of (\ref{2}): one would like to define the ``double extension'' by
$(U^\sett{1})^\sett{1}$, but since it turns out that one has to remember the order in which these iterated
extensions are performed, we must first make a formal copy 
$\sett{k}$ of  the symbol $\sett{1}$, for each $k \in \N$, and then define
\begin{equation}\label{3}
U^\sfn := U^\sett{1,2,\ldots,n} := (\ldots(U^\sett{1})^\sett{2} \ldots )^\sett{n} . 
\end{equation}
Then (Theorem \ref{th:Usfn}) $U^\sfn$ is an 
{\em $n$-fold groupoid}, called the {\em $n$-fold tangent groupoid} of $U$
(def.\ \ref{def:n-fold}; indeed, it is a higher order generalization of {\em Connes' tangent groupoid},
{\it cf.} def.\ \ref{def:tangentgroupoid}). 
A map $f:U \to W$ then is {\em smooth} if, and only if, it has natural prolongations to groupoid morphisms
$f^\sfn:U^\sfn \to W^\sfn$, for all $n \in \N$ (Theorem \ref{th:tangent-n}).
Studying the structure of $f^\sfn$ and the one of $U^\sfn$ go hand in hand. 

\ssk
A first aim of the present text is to make these three formulae intelligible: to give the necessary background and 
definitions, and to indicate the (elementary) proofs. 
A second aim is to unfold them a little bit more: to
 give some ideas about their consequences and about what kind of theory emerges from them.
As said above, the full unfolding will be a matter for another book. 
 
\ssk
Here is a short description of the contents of this work:
Basic notions and ideas on {\em groupoids} are presented in
Section \ref{sec:groupoids}.  
In Section \ref{sec:calculus}, we explain  that
first order calculus of a map $f$ is described by groupoids, via formulae (\ref{1}) and (\ref{2}).
We also establish the 
 {\em chain rule} 
$(g \circ f)^\sfn=g^\sfn \circ f^\sfn$.
The chain rule is the basic tool needed to define {\em atlasses and manifolds}.
In the present approach, speaking about
manifolds is less essential than in the usual presentation, and the corresponding
Section \ref{sec:manifolds} is rather short. Indeed, our constructions are natural from the very outset, and
hence it is more or less obvious that everything carries over to the manifold level: the groupoid
$M^\sett{1}$ is an intrinsic object associated to any (Hausdorff) manifold $M$.  
The step from first order to higher order calculus is, conceptually, most important and challenging: already in
usual calculus, the procedure of {\em iterating} is not quite straightforward, and in the present
approach, it naturally leads to higher, {\em $n$-fold groupoids}. A (hopefully) simple and down-to-earth
presentation of this concept is given in Section \ref{sec:doublegroupoids}.
With this preparation at hand, Sections \ref{sec:firstLie} and \ref{sec:higherorder} are the heart of the present work:
(general) higher order calculus works by using several times principles of (first order) Lie calculus.
We concentrate on the {\em symmetric cubic} theory, and show that it can be understood from the point of view of scalar extension by
{\em cubes of rings} (Theorem \ref{th:scalarextension}).
These definitions are the beginning of a far-reaching theory whose full exposition would need more space.
In order to give an impression of its possible scope, at the end of this paper we give some more 
comments on Lie Theory (subsection \ref{ssec:Lietheory}), Connection Theory (subsection \ref{ssec:connections}),
and on further problems (section \ref{sec:perspectives}) such as
the case of {\em discrete} base rings, ``full'' cubic calculus and the scaloid,  relation with SDG,
and the case of possibly {\em non-commutative} base rings and supercalculus. 

\begin{notation}
For $n \in \N$, the {\em standard $n$-element set} is denoted by
\begin{equation}\label{eqn:sfn}
\sfn := \{ 1,\ldots ,n \} . 
\end{equation}
\end{notation}

\begin{acknowledgment}
The present work has been presented at the 
\href{http://50sls.impan.pl//slides/Bertram.pdf}{50th Seminar Sophus Lie in B\c edlewo, September 2016}, 
and I would like to thank the organizers for inviting me and for the great job they did in organizing this conference.
I also thank the unknown referee for helpful comments.  
\end{acknowledgment}

\section{Groups, and their cousins}\label{sec:groupoids}

In Lie Theory, but also in general mathematics, {\em groups} play a double r\^ole: on the one hand, they are an 
{\em object} of study
in their own right, and on the other hand, they are an important {\em tool}, or even: a  {\em part of mathematical
language}, used for studying a great variety of topics. 
This double aspect is shared by some of  their 
``cousins''. Recall that a group has a {\em binary, everywhere defined, and associative} 
product, {\em one} unit, and {\em inversion}. Then,
\begin{itemize}
\item
forgetting the unit but
keeping an everywhere defined product we get {\em torsors},
\item
forgetting associativity, but keeping one unit and invertibility, we get {\em loops},
\item
allowing {\em many units}, and a {\em not everwhere defined product}, we get {\em groupoids},
\item
forgetting inversion in a groupoid, we get {\em small categories},
\item
forgetting the units in a groupoid, we get {\em pregroupoids}.
\end{itemize}
\nin
In this work, we will not talk about {\em loops}, although, via the theory of {\em connections}, they have
a close relation to the topics to be discussed here (see subsection \ref{ssec:connections}).

\subsection{Groups without unit: torsors} \label{ssec:groups}
We start with a {\em group}. But
sometimes one wishes to get rid of its unit element, just like affine spaces are sometimes preferable to vector spaces.
A simple and efficient way to describe this procedure algebraically is to replace the {\em binary} product map by
the {\em ternary} product map $G^3 \to G$, $(x,y,z) \mapsto (xyz):=xy^{-1}z$. It satisfies the algebraic identities

\ssk
(IP) idempotency:  $(xyy)=x=(yyx)$,

(PA) para-associativity:  $((uvw)yz) = (uv(wyz)) = (u (ywv) z)$.
\ssk

\nin
By definition, a {\em torsor} is a set together with an everywhere defined ternary map satisfying (IP) and
(PA).\footnote{There is no really standard terminolgy: other terms are
{\em heap, groud, principal homogeneous space}... Using the term ``torsor'' in our sense
has been popularized by John Baez.}
 It is easy to prove that every torsor $M$, after fixing an element $y\in M$, becomes a group with product
$xz:=(xyz)$.  The converse is also true:
 torsors are for groups what affine spaces are for vector spaces (folklore).

\subsection{Groupoids}
By now, it is widely realized that {\em groupoids} are omnipresent in mathematics -- see
 \cite{Br87, CW99, Ma05, W96}. Since there are various definitions and conventions, it is important to fix one throughout
 a given text. Here is our's:

\begin{definition}\label{def:groupoid}
A {\em groupoid} $G =(G_1,G_0,\alpha,\beta,\ast,1,i)$ is given by:
 a set $G_0$ of {\em objects}, a set $G_1$ of {\em morphisms}, by
{\em source} and {\em targent maps} $\alpha,\beta: G_1 \to G_0$, a product $\ast$ defined on the set
$$
G_1 \times_{\alpha,\beta} G_1 :=
\bigl\{ (a,b) \in G_1 \times G_1 \mid \alpha(a)=\beta(b) \bigr\} ,
$$ 
such that $\alpha(a \ast b)=\alpha(b)$ and  $\beta(a\ast b)=\beta(a)$ and 
$(a \ast b) \ast c= a\ast (b \ast c)$ whenever $\beta(c) =\alpha(b)$ and $\beta(b)=\alpha(a)$; 
a {\em unit section} $1:G_0 \to G_1$, $x \mapsto 1_x$ such that
$\alpha \circ 1 = \id_{G_0} = \beta \circ 1$, and
$a \ast 1_{\alpha(a)} = a$, $1_{\beta(b)} \ast b = b$, 
and an 
{\em inversion map} $i:G \to G$, $a \mapsto a^{-1}$ such that
$a \ast a^{-1} = 1_{\beta(a)}$, $a^{-1} \ast a = 1_{\alpha(a)}$.
\end{definition}

\nin Following \cite{CW99, W96}, we shall represent a groupoid by drawing its morphism set.   
Fibers of $\alpha$ and $\beta$ are represented by  grey lines whose directions are given by the two arrows,
labelled $\alpha,\beta$, and
the object set $G_0$ is identified with the image of the unit section (fat horizontal line in the figure).

\begin{figure}[h]
\caption{Representation of a groupoid}
\begin{center}
\psset{xunit=0.6cm,yunit=0.3cm,algebraic=true,dotstyle=o,dotsize=3pt 0,linewidth=0.8pt,arrowsize=3pt 2,arrowinset=0.25}
\begin{pspicture*}(-4.3,-5.0)(19.08,4.3)
\psline{->}(-2,1)(1,-2)
\psline{->}(-2,1)(-2,-2)
\psline[linewidth=2.8pt](3.48,-2.58)(13.14,-2.52)
\psplot[linecolor=lightgray]{-4.3}{29.08}{(--2.7-3*x)/3}
\psplot[linecolor=lightgray]{-4.3}{29.08}{(--31.86-3*x)/3}
\psline[linecolor=lightgray](13.14,-10.06)(13.14,6.3)
\psline[linecolor=lightgray](3.48,-10.06)(3.48,6.3)
\psplot[linecolor=lightgray]{-4.3}{29.08}{(--7.47-3*x)/3}
\psplot[linecolor=lightgray]{-4.3}{29.08}{(--41.23-9.66*x)/9.66}
\psplot[linecolor=lightgray]{-4.3}{29.08}{(--61.76-9.66*x)/9.66}
\psplot[linecolor=lightgray]{-4.3}{29.08}{(--78.29-9.66*x)/9.66}
\psline[linecolor=lightgray](5.06,-10.06)(5.06,6.3)
\psline[linecolor=lightgray](6.83,-10.06)(6.83,6.3)
\psline[linecolor=lightgray](8.94,-10.06)(8.94,6.3)
\psline[linecolor=lightgray](10.64,-10.06)(10.64,6.3)
\psline[linewidth=1.6pt](5.06,3.04)(5.06,1.33)
\psline[linewidth=1.6pt](5.06,1.33)(5.06,-0.79)
\psline[linewidth=1.6pt](5.06,3.04)(6.83,1.28)
\psline[linewidth=1.6pt](6.83,1.28)(8.94,-0.84)
\psline[linewidth=1.6pt](6.83,1.28)(6.83,-0.43)
\psline[linewidth=1.6pt](5.06,1.33)(6.83,-0.43)
\psline[linewidth=1.6pt](5.06,-0.79)(6.83,-2.56)
\psline[linewidth=1.6pt](6.83,-0.43)(6.83,-2.56)
\psline[linewidth=1.6pt](6.83,-0.43)(8.94,-2.55)
\psline[linewidth=1.6pt](8.94,-0.84)(8.94,-2.55)
\psline[linewidth=1.6pt](6.83,-2.56)(8.94,-4.67)
\psline[linewidth=1.6pt](8.94,-2.55)(8.94,-4.67)
\begin{scriptsize}
\rput[bl](-0.28,-0.14){$\beta$}
\rput[bl](-2.52,-0.54){$\alpha$}
\rput[bl](10.9,2.84){$G_1$ (morphism set)}
\rput[bl](10.9,-3.84){$G_0$ (object set)}
\psdots[dotstyle=*,linecolor=darkgray](5.06,-0.79)
\rput[bl](5.24,-0.66){\darkgray{$f$}}
\psdots[dotstyle=*,linecolor=darkgray](6.83,-0.43)
\rput[bl](7,-0.32){\darkgray{$g$}}
\psdots[dotstyle=*,linecolor=darkgray](5.06,1.33)
\rput[bl](3.14,1.46){\darkgray{$g\ast f$}}
\psdots[dotstyle=*,linecolor=darkgray](8.94,-0.84)
\rput[bl](9.22,-0.7){\darkgray{$h$}}
\psdots[dotstyle=*,linecolor=darkgray](5.06,3.04)
\rput[bl](5.14,3.26){\darkgray{$h \ast g \ast f$}}
\rput[bl](7.14,1.46){\darkgray{$h \ast g$}}
\psdots[dotstyle=*,linecolor=darkgray](6.83,1.28)
\psdots[dotstyle=*,linecolor=darkgray](8.94,-4.67)
\rput[bl](9.18,-4.64){\darkgray{$g^{-1}$}}
\end{scriptsize}
\end{pspicture*}
\end{center}
\end{figure}

\begin{example}[Pair groupoids]
For every set $M$, the {\em pair groupoid} $\pG(M)$ is defined by:
$G_1 = M\times M$, $G_0=M$, $\alpha(y,x)=x$, $\beta(y,x)=y$, $1_x = (x,x)$,
$(z,y) \ast (y,x)=(z,x)$, $(y,x)^{-1}=(x,y)$.
 In this case, one might rather be inclined to represent $G_0$ by a diagonal line, and
$\beta$ by horizontal lines. 
The pair $(y,x)$ may be seen as the ``zero jet'' of a function sending $x$ to $y$, and the pair groupoid
may thus be considered as the {\em groupoid of jets of order zero}.
\end{example}

\begin{example}\label{ex:equivalencerelation}
Let $\epsilon = \{ (x,y) \in M^2 \mid x \sim y \}$ be (the graph of) an equivalence relation $\sim$ on $M$.
Then $G_1 = \epsilon$, $G_0 = M$ defines a subgroupoid of the pair groupoid.
\end{example}

\begin{example}[Groups]
If $\alpha =\beta$, then every fiber $[y]_\alpha = \{ g \in G_1 \mid \alpha(g)=y \}$ is a group with unit $1_y$: we have
a {\em group bundle}. If, moreover, $G_0$ is a singleton, then $G_1$ is  a usual group.
Thus groupoids generalize groups. 
\end{example}

\subsection{Small cats}
By {\em small cat} we shall abbreviate the term
 {\em small category}: it is defined just like a groupoid, without requiring existence of the inverse $i$.
For instance, if in Example \ref{ex:equivalencerelation}, $\epsilon$ is reflexive and transitive, but not symmetric, we
get a small cat. 
A small cat with one object is a {\em monoid}.
A groupoid can be defined as a small cat in which every morphism is invertible.
When we use the word ``category'', we mean ``(possibly) big category'' (that is, the collection of objects and morphisms 
need not form a set in the sense of naive set theory). 

\subsection{Pregroupoids}
With groupoids, we may play the game described above, forgetting the units in order to get the groupoid analog of 
a torsor, called a  {\em pregroupoid}: 
we retain properties of the {\em ternary} product
$(abc):= a \ast b^{-1}\ast c$, defined on the set
$$
G_1 \times_{\alpha} G_1 \times_\beta G_1 :=
\bigl\{ (a,b,c) \in G_1 \times G_1 \times G_1  \mid \alpha(a)=\alpha(b), \beta(c)=\beta(c)  \bigr\}  .
$$ 
As is immediately checked, the ternary product satisfies idempotency (IP) and
para-associativity (PA) (see above, \ref{ssec:groups}).
A {\em pregroupoid} is defined to be a set $G_1$ with two surjections $\alpha:G_1 \to A$,
$\beta:G_1 \to B$ and a ternary product defined on
$G_1 \times_{\alpha} G_1 \times_\beta G_1$ satisfying these two properties
(definition due to Kock, cf.\ \cite{Be14b}).

\begin{example}
If  $A=B$ is a singleton, then a pregroupoid is the same as a {\em torsor}.
\end{example}

\begin{example}\label{ex:product}
Let $A,B$ sets, let $G_1 := B \times A$, $\alpha = \pr_A$ and $\beta = \pr_B$ the two projections, and
when $\alpha(a,b)=\alpha(a',b'), \beta(a'',b'')=\beta(a',b')$, i.e., $b=b'$, $a'=a''$,
$$
\bigl( (a,b),(a',b'),(a'',b'') \bigr) := (a,b'') \, .
$$
You may call this a ``pair-pregroupoid''. 
If $A=B$, this is the pair groupoid with $(uvw)=u\ast v^{-1} \ast w$, by forgetting the unit section; else it is ``new''. 
\end{example}

\subsection{Functors}\label{ssec:functors}
A  {\em functor}
between small cats or groupoids $G = (G_1,G_0)$ and $G' = (G_1',G_0')$ is given by  a pair of maps 
$f = (f_1:G_1 \to G_1', f_0:G_0 \to G_0')$ such  that
\begin{enumerate}
\item $\alpha' \circ f_1 = f_0 \circ \alpha$, $\quad \beta' \circ f_1 = f_0 \circ \beta$, $\quad 1' \circ f_0 = f_1 \circ 1$,
\item $\forall (a,b) \in G_1 \times_{\alpha,\beta} G_1$ : $f_1 (a \ast b) = f_1(a) \ast' f_1(b)$.
\end{enumerate} 
Obviously, small cats, and groupoids and their functors form (big!) categories.

\subsection{Opposites}
For each small cat or groupoid  $G$, there is an {\em opposite small cat (groupoid)} 
$G^\opp$, given by the same sets, and 
$\alpha^\opp := \beta$, $\beta^\opp = \alpha$, $a \ast^\opp b := b \ast a$,  $i^\opp = i$ and $1^\opp = 1$. 
A  {\em contravariant functor} is a functor into an opposite cat.

\subsection{Sections and bisections}\label{ssec:bisections}
An {\em $\alpha$-section} of $(G^1,G^0)$ is a subset $S \subset G^1$ which is a representative set for
$\alpha$-classes, and likewise for {\em $\beta$-sections}.
The spaces of such sections are denoted by
\begin{align}
\Sec_\alpha(G) & := \bigl\{
S \subset G^1 \mid \,
\forall x \in G_0 : \exists^! s=s(x) \in S : x =\alpha(s) \bigr\},
\\
\Sec_\beta(G) & := \bigl\{
S \subset G^1 \mid \,
\forall x \in G_0 : \exists^! s=s(x) \in S : x =\beta(s) \bigr\} .
\end{align}
Of course, then $S = \im(s)$ is uniquely determined by the map $s:G^0 \to G^1$, which is a section of $\alpha$, resp.\
of $\beta$.  
A {\em bisection} is a section both of $\alpha$ and of $\beta$, and the space of all bisections is denoted by
\begin{equation}
\Bis(G):= \Sec_\alpha(G) \cap \Sec_\beta(G).
\end{equation}

\nin
The proof of the following two theorems is straightforward (cf.\ \cite{CW99, Be14b}).

\begin{theorem}[Monoid of sections, group of bisections]
For every groupoid $G$,
the power set $\cP(G^1)$ forms a monoid with respect to the product $S \ast R$ induced by the groupoid law $\ast$
of $G$, and  unit $1=1_{G_0}$ the unit section, 
$$
S \ast R = \{  s \ast r \mid \, s \in S, r\in R, \alpha(s)=\beta(r) \} .
$$
The sets $\Sec_\alpha(G)$ and $\Sec_\beta(G)$ are sub-moinoids of
$\cP(G)$ such that $(\Sec_\alpha(G))^{-1} = \Sec_\beta(G)$. In particular, $\Bis(G)$ is a group, called the
{\em group of bisections} of $G$.
\end{theorem}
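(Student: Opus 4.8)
The plan is to verify each claim in turn, working entirely inside the monoid $\cP(G^1)$ of subsets of the morphism set, with product induced by the groupoid multiplication $\ast$ as stated.

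First I would check that $\cP(G^1)$ is a monoid under $S \ast R = \{ s \ast r \mid s \in S,\ r \in R,\ \alpha(s) = \beta(r)\}$. Associativity follows pointwise from associativity of $\ast$ in $G$ together with the bookkeeping of the composability conditions $\alpha(s) = \beta(r)$: if $s \ast r$ is defined and $(s\ast r) \ast q$ is defined, then $\beta(q) = \alpha(s \ast r) = \alpha(r)$, so $r \ast q$ is defined, and $s \ast (r \ast q)$ is defined since $\beta(r\ast q) = \beta(r) = \alpha(s)$; hence $(S \ast R) \ast Q = S \ast (R \ast Q)$. For the unit, with $1 = 1_{G_0} = \{ 1_x \mid x \in G_0\}$, one has $S \ast 1 = \{ s \ast 1_{\alpha(s)} \mid s \in S\} = S$ using $a \ast 1_{\alpha(a)} = a$, and symmetrically $1 \ast S = S$ using $1_{\beta(b)} \ast b = b$; note here that for \emph{every} $s \in S$ the composable partner $1_{\alpha(s)}$ is present in $1$, so no element of $S$ is lost.

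Next I would show $\Sec_\alpha(G)$ is a submonoid. The unit section $1$ is an $\alpha$-section (indeed a bisection). If $S, R \in \Sec_\alpha(G)$, I claim $S \ast R \in \Sec_\alpha(G)$: given $x \in G_0$, let $r = r(x)$ be the unique element of $R$ with $\alpha(r) = x$, and let $s = s(\beta(r))$ be the unique element of $S$ with $\alpha(s) = \beta(r)$; then $s \ast r$ is defined and $\alpha(s \ast r) = \alpha(r) = x$. For uniqueness, if $s' \ast r' \in S \ast R$ also has $\alpha$-value $x$, then $\alpha(r') = \alpha(s' \ast r') = x$ forces $r' = r$, and then $\alpha(s') = \beta(r') = \beta(r)$ forces $s' = s$; so $S \ast R$ meets each $\alpha$-fiber exactly once. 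The argument for $\Sec_\beta(G)$ is the mirror image, using the target map. For the relation $(\Sec_\alpha(G))^{-1} = \Sec_\beta(G)$, I would set $S^{-1} := i(S) = \{ s^{-1} \mid s \in S\}$ and observe that the inversion map swaps the roles of $\alpha$ and $\beta$ (since $\alpha(s^{-1}) = \beta(s)$ and $\beta(s^{-1}) = \alpha(s)$), so $S$ is a representative set for $\alpha$-classes iff $S^{-1}$ is one for $\beta$-classes; this gives a bijection $\Sec_\alpha(G) \to \Sec_\beta(G)$, $S \mapsto S^{-1}$.

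Finally, for $\Bis(G) = \Sec_\alpha(G) \cap \Sec_\beta(G)$: it is a submonoid as an intersection of submonoids, and it is closed under the inversion $S \mapsto S^{-1}$ by the previous paragraph. It remains to see that $S^{-1}$ is a genuine two-sided inverse of $S$ in $\cP(G^1)$ when $S$ is a bisection. The inclusion $S \ast S^{-1} \subseteq 1$: for $s, s' \in S$ with $\alpha(s) = \beta(s'^{-1}) = \alpha(s')$, the bisection property forces $s = s'$ (uniqueness of the $\alpha$-representative), so $s \ast s'^{-1} = s \ast s^{-1} = 1_{\beta(s)}$. The reverse inclusion $1 \subseteq S \ast S^{-1}$: given $x \in G_0$, pick the unique $s \in S$ with $\beta(s) = x$ (using that $S$ is a $\beta$-section); then $s \ast s^{-1} = 1_x$, so $1_x \in S \ast S^{-1}$. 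Hence $S \ast S^{-1} = 1$, and symmetrically $S^{-1} \ast S = 1$, so every bisection is invertible and $\Bis(G)$ is a group. The only slightly delicate point — the step I would flag as needing the most care — is keeping track of the composability constraints when forming $S \ast R$: one must check both that the intended product elements genuinely are composable and that no spurious extra elements appear in a fiber, which is exactly where the "unique representative" hypothesis is used twice in succession.
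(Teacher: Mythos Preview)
Your proof is correct and is exactly the straightforward verification the paper has in mind; in fact the paper does not spell out a proof at all, merely remarking that ``the proof of the following two theorems is straightforward (cf.\ \cite{CW99, Be14b})''. Your argument supplies precisely the details one would expect: pointwise associativity, the unit check, closure of $\Sec_\alpha$ and $\Sec_\beta$ under the product via the double use of the unique-representative property, and the two inclusions establishing $S\ast S^{-1}=1$ for bisections.
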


\begin{example}\label{ex:bisections}[Binary relations]
Let $G = \pG(M)$ be the pair groupoid of a set $M$.
Then $\cP(G^1) = \cP(M\times M)$ is the set of binary relations on $M$ with their usual relational product, and
$\Sec_\alpha(G)$ is the set of (graphs of) mappings $f:M\to M$, and $\Bis(G)=\Bij(M)$ the group of bijections
of $M$. Note that $\Sec_\beta(G)$ is the set of ``duals'' of mappings; there is no common word in mathematics
to name it. 
\end{example}

\begin{theorem}[Anchor]\label{th:anchor0}
For each groupoid $(G^1,G^0)$, the {\em anchor map} $(\Upsilon,\id_{G_0})$, 
$$
\Upsilon : G_1 \to G_0\times G_0,,\quad g \mapsto (\beta(g),\alpha(g)) ,
$$
is a functor from $G$ to $\pG(G_0)$,
and it induces a group morphism
$$
\Bis(G) \to \Bij(G_0), \quad
S \mapsto (x \mapsto \beta( S \cap [x]_\alpha)) \, .
$$
\end{theorem}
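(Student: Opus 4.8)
The plan is to verify the two assertions of Theorem \ref{th:anchor0} separately, both being direct computations from the groupoid axioms in Definition \ref{def:groupoid} and the definition of functor in \ref{ssec:functors}.

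First I would check that $(\Upsilon,\id_{G_0})$ is a functor from $G$ to $\pG(G_0)$. Recall that in $\pG(G_0)$ we have source $\alpha^{\pG}(y,x)=x$, target $\beta^{\pG}(y,x)=y$, units $1_x=(x,x)$, and product $(z,y)\ast(y,x)=(z,x)$. Condition (1) of \ref{ssec:functors} amounts to $\alpha^{\pG}(\Upsilon(g))=\alpha(g)$, i.e.\ $\alpha(\Upsilon(g))$-component equals $\alpha(g)$, which holds since $\Upsilon(g)=(\beta(g),\alpha(g))$; similarly $\beta^{\pG}(\Upsilon(g))=\beta(g)$; and $\Upsilon(1_x)=(\beta(1_x),\alpha(1_x))=(x,x)=1_x^{\pG}$, using $\alpha\circ 1=\id=\beta\circ 1$. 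For condition (2): if $(a,b)\in G_1\times_{\alpha,\beta}G_1$, i.e.\ $\alpha(a)=\beta(b)$, then $a\ast b$ is defined with $\alpha(a\ast b)=\alpha(b)$ and $\beta(a\ast b)=\beta(a)$, hence $\Upsilon(a\ast b)=(\beta(a),\alpha(b))$; on the other hand $\Upsilon(a)\ast\Upsilon(b)=(\beta(a),\alpha(a))\ast(\beta(b),\alpha(b))=(\beta(a),\alpha(b))$, legitimate precisely because $\alpha(a)=\beta(b)$. So $\Upsilon(a\ast b)=\Upsilon(a)\ast\Upsilon(b)$, and $(\Upsilon,\id_{G_0})$ is a functor.

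Next I would treat the induced map on bisections. Given $S\in\Bis(G)$, for each $x\in G_0$ there is a unique $s\in S$ with $\alpha(s)=x$; set $\phi_S(x):=\beta(s)$, which is exactly $\beta(S\cap[x]_\alpha)$. I claim $\phi_S\in\Bij(G_0)$: since $S$ is also a $\beta$-section, for each $y$ there is a unique $s'\in S$ with $\beta(s')=y$, and then $x'=\alpha(s')$ is the unique preimage of $y$ under $\phi_S$, giving bijectivity. The cleanest way to see that $S\mapsto\phi_S$ is a group morphism is to note that $\phi_S$ is precisely the image of $S$ under the monoid morphism $\cP(G^1)\to\cP(G_0\times G_0)$, $S\mapsto\Upsilon(S)$, induced by the functor $\Upsilon$ (functoriality of $\Upsilon$ gives $\Upsilon(S\ast R)=\Upsilon(S)\ast\Upsilon(R)$ on the level of subsets). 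By the previous theorem and Example \ref{ex:bisections}, $\Upsilon$ restricts to a map $\Bis(G)\to\Bis(\pG(G_0))=\Bij(G_0)$, and since $\Upsilon(1_{G_0})=1_{G_0}$ (the diagonal) this restriction is a group morphism, which under the identification $\Bis(\pG(G_0))=\Bij(G_0)$ of Example \ref{ex:bisections} sends $S$ to $\phi_S$.

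I do not expect any serious obstacle here: everything is a bookkeeping exercise in the axioms. The only point requiring a little care is making sure the partial product $\Upsilon(a)\ast\Upsilon(b)$ in $\pG(G_0)$ is actually defined exactly when $a\ast b$ is defined in $G$ — that is, matching the fibered-product condition $\alpha(a)=\beta(b)$ on both sides — and, for the bisection statement, checking that a bisection of $G$ really does push forward to a bisection (equivalently, a bijection) of $G_0$ rather than merely a relation; this uses both the $\alpha$-section and the $\beta$-section property of $S$. Once these are in place, compatibility with products and units is immediate from the functor property established in the first step.
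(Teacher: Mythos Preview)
Your proposal is correct and is exactly the kind of direct verification the paper has in mind: the paper itself gives no proof beyond declaring it ``straightforward (cf.\ \cite{CW99, Be14b})'', and your check of the functor conditions from \ref{ssec:functors} together with the pushforward of bisections via $\Upsilon$ (using the preceding theorem and Example \ref{ex:bisections}) is the natural way to fill this in.
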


\begin{remark}
A groupoid is called {\em principal} if $\Upsilon$ is an isomorphism. 
This holds iff the groupoid is isomorphic to a pair groupoid. In this sense, principal groupoids ``are'' the pair
groupoids.
\end{remark}

\section{The groupoid of differential calculus} \label{sec:diffcal}\label{sec:calculus}

\subsection{The classes $C^n$}
Let us briefly review ``usual'' differential calculus. 
The crucial operation  is to take the limit $t\to 0$ in the {\em difference quotient} 
(\ref{eqn:differencequotient}) of a map
$f:U \to W$, where $f$ is defined on an (open) set $U$ in a vector space $V$, with values in another vector space $W$,
\begin{equation} \label{eqn:differencequotient}
f^{[1]} (x,v,t) := \frac{ f(x+tv) - f(x)}{t} \, .
\end{equation}
In other words, filling in the ``missing value'' for $t=0$, we can extend the difference quotient to a map 
$f^{[1]}:U^{[1]} \to W$ defined on the whole set $U^{[1]}$ given by (\ref{1}).
It is more or less folklore that  this map is {\em continuous} iff $f$ is of class $C^1$:

\begin{theorem}\label{th:difftheorem}
Assume $\K=\R, V=\R^n,W= \R^m$. The following are equivalent:
\begin{enumerate}
\item
$f$ is of class $C^1$,
\item
 the difference quotient map extends
to a {\em continuous} map
$f^{[1]} : U^{[1]} \to W$.
\end{enumerate}
Under these conditions, the differential of $f$ is given by
$df(x)v= f^{[1]}(x,v,0)$.
Moreover, with the same notation, the following are also equivalent:
\begin{enumerate}
\item[(1')]
$f$ is of class $C^n$,
\item[(2')]
$f$ is $C^1$, and $f^{[1]}:U^{[1]} \to W$ is of class $C^{n-1}$. 
\end{enumerate}
\end{theorem}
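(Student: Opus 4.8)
The plan is to prove the two equivalences separately, both by a ``bootstrapping'' argument that reduces everything to the one-variable case and then iterates.

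For the equivalence of (1) and (2): the implication (2)$\Rightarrow$(1) is the substantial direction. First I would observe that continuity of $f^{[1]}$ on $U^{[1]}$ at the slice $t=0$ forces, for each fixed $x$, the map $v \mapsto f^{[1]}(x,v,0)$ to be the limit of the difference quotients, so it is the Gateaux derivative; I then need to check this candidate $df(x)v := f^{[1]}(x,v,0)$ is linear in $v$ and continuous in $(x,v)$. Linearity is the delicate point: homogeneity $f^{[1]}(x,\lambda v,0) = \lambda f^{[1]}(x,v,0)$ follows by the substitution $(v,t)\mapsto(\lambda v, t/\lambda)$ inside $f^{[1]}$ and passing to the limit, while additivity is obtained by writing $f(x+t(v+w))-f(x)$ as a telescoping sum $\bigl(f(x+tv+tw)-f(x+tv)\bigr)+\bigl(f(x+tv)-f(x)\bigr)$, dividing by $t$, and using joint continuity of $f^{[1]}$ to take $t\to 0$ in both terms (the first term needs continuity in the base point, which is exactly what joint continuity on $U^{[1]}$ gives). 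Joint continuity of $(x,v)\mapsto df(x)v$ is then immediate from joint continuity of $f^{[1]}$ restricted to $\{t=0\}$. Finally, to pass from Gateaux-differentiable-with-continuous-differential to genuine $C^1$ (Fréchet), I invoke the standard mean value / fundamental theorem of calculus estimate: $f(x+h)-f(x)-df(x)h = \int_0^1 \bigl(df(x+sh)-df(x)\bigr)h\,ds$, whose norm is $o(\|h\|)$ by continuity of $df$. The converse (1)$\Rightarrow$(2) is the classical fact that for a $C^1$ map the difference quotient $f^{[1]}(x,v,t)=\int_0^1 df(x+stv)v\,ds$ extends continuously, with value $df(x)v$ at $t=0$; here I would just cite this integral representation and continuity of the integrand.

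For the equivalence of (1$'$) and (2$'$): I would argue by induction on $n$, the base case $n=1$ being the first part (with the convention that $C^0$ means continuous, so ``$f^{[1]}$ is $C^0$'' is ``$f^{[1]}$ is continuous''). For the inductive step, (1$'$)$\Rightarrow$(2$'$): if $f$ is $C^n$ then in particular $C^1$, and from the representation $f^{[1]}(x,v,t)=\int_0^1 df(x+stv)\,v\,ds$ one sees that $f^{[1]}$ is built from $df$ (which is $C^{n-1}$) by composition with smooth maps and integration over a parameter, hence is $C^{n-1}$. Conversely for (2$'$)$\Rightarrow$(1$'$): if $f$ is $C^1$ and $f^{[1]}$ is $C^{n-1}$, then setting $t=0$ shows $(x,v)\mapsto df(x)v$ is $C^{n-1}$ as a restriction of a $C^{n-1}$ map to a linear subspace; this gives that $df$ is $C^{n-1}$ as a map $U \to \mathrm{Hom}(V,W)$, and a $C^1$ map whose differential is $C^{n-1}$ is $C^n$.

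The main obstacle I anticipate is the linearity argument for the differential in the direction (2)$\Rightarrow$(1): extracting additivity of $v\mapsto f^{[1]}(x,v,0)$ from mere joint continuity of $f^{[1]}$ (rather than assuming differentiability a priori) is the one genuinely non-formal step, since it is precisely the place where the finite-dimensional hypothesis and the structure of $U^{[1]}$ as an open-ish subset of $V\times V\times\K$ are used --- one must make sure that the points $(x, v, t)$, $(x+tv, w, t)$, $(x, v+w, t)$ all lie in $U^{[1]}$ for small $t$, which they do by openness of $U$, and then that all three difference quotients converge. A secondary technical point is keeping track of exactly what ``$C^{n-1}$'' means for a map defined on the not-quite-open set $U^{[1]}$; I would either note that $U^{[1]}$ is open in $V\times V\times\K$ (which it is, since $U$ is open and the conditions $x\in U$, $x+tv\in U$ are open) so the usual notion applies verbatim, or, in the spirit of this paper, take the iterated difference-quotient characterization as the definition, in which case the induction is essentially bookkeeping.
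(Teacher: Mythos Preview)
Your proposal is correct and complete. The paper itself does not give a proof of this theorem; it simply says ``The proof is a nice exercise in undergraduate calculus --- see, e.g., [Be08, Be11] for the solution, and [BGN04] for generalizations'', so there is no detailed argument to compare against. That said, your approach is precisely in the spirit the paper advocates: the additivity identity $f^{[1]}(x,v+w,t)=f^{[1]}(x,v,t)+f^{[1]}(x+tv,w,t)$ that you extract from the telescoping sum is exactly the paper's equation~(3.3) in Example~3.1, and your method of proving identities for invertible $t$ and then passing to $t=0$ by continuity is the ``density principle'' of Lemma~3.2. The integral representation $f^{[1]}(x,v,t)=\int_0^1 df(x+stv)v\,ds$ for the direction (1)$\Rightarrow$(2) and for the higher-order induction is the standard route taken in the cited references.

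One small correction to your self-diagnosis: the additivity argument does \emph{not} use finite-dimensionality --- it works verbatim over any topological module, which is indeed one of the paper's main points. The place where $\K=\R$ and finite dimension genuinely enter is rather in your final step, when you pass from ``$(x,v)\mapsto df(x)v$ is jointly continuous'' to ``$x\mapsto df(x)$ is continuous in operator norm'' in order to get the $o(\|h\|)$ estimate for Fr\'echet differentiability; this implication relies on compactness of the unit sphere. Also, your worry about $U^{[1]}$ being ``not-quite-open'' is unfounded: as you yourself note, $U^{[1]}$ is genuinely open in $V\times V\times\K$, so the usual $C^{n-1}$ notion applies without qualification.
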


\nin
The proof is a nice exercise in undergraduate calculus -- see, e.g., \cite{Be08, Be11} for the solution, and \cite{BGN04}
for generalizations to various infinite dimensional situations. 
As observed in \cite{BGN04}, property (2') from the theorem can serve much more generally
as a {\em definition} of the class $C^n$ over non-discrete topological fields, or even more generally, over ``good''
topological rings:

\begin{definition}\label{def:C1K}\label{def:CnK}
Assume $\K$ is a
 {\em good topological ring}, meaning,   a topological ring whose unit group $\K^\times$ is dense in $\K$.
A map $f:U \to W$ from an open set $U$ in a topological $\K$-module $V$ to a a topological $\K$-module
$W$ is called {\em of class $C^1_\K$} if it satisfies property (2) from the preceding theorem, i.e., if a continuous
map $f^{[1]}:U^{[1]} \to W$, extending the difference quotient, exists.  
The class $C^n_\K$ is defined inductively by using property (2') from the theorem, and  the
{\em higher order extended domains} and {\em higher order difference quotient maps} are defined inductively by
\begin{align*}
U^{[n]} & := (U^{[n-1]})^{[1]}, \\
f^{[n]} & := (f^{[n-1]})^{[1]} : U^{[n]} \to W .
\end{align*}
\end{definition}

\nin
Calculus based on this definition, called {\em topological differential calculus},
 has excellent properties, which by the way clarify and simplify proofs of 
well-known facts from ``usual'' real calculus. One uses, over and over, the ``density principle'':

\begin{lemma}[Prolongation of identities]\label{la:density}
If $f$ is of class $C^n$, then all algebraic identities satisfied for $f^{[n]}$ and for {\em invertible} scalars in the arguments of
$f^{[n]}$ continue to hold, by continuity and density, for {\em all} scalars.
\end{lemma}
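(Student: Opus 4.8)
The plan is to read the assertion as a \emph{propagation principle}. An ``algebraic identity satisfied for $f^{[n]}$'' is an equality $A = B$ of two maps $D \to W$ (or into a finite power of $W$ or of $\K$), where $D$ is an open subset of a finite product $U_1^{[n_1]} \x \cdots \x U_r^{[n_r]}$ of extended domains with $n_i \le n$, and where $A$ and $B$ are built from the maps $f = f^{[0]}, f^{[1]}, \dots, f^{[n]}$ by composition with continuous operations --- addition and scalar multiplication in the modules $V_i$ and $W$, addition and multiplication in $\K$, projections and diagonals. The merit of this reading is that $A$ and $B$ are then automatically continuous: if $f$ is of class $C^n$, an easy induction on $n$, using property (2') of Definition \ref{def:CnK}, shows that each $f^{[k]} : U^{[k]} \to W$ with $0 \le k \le n$ is defined and continuous, and continuity is inherited under the operations just listed.

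First I would record that every extended domain is open in its ambient space. By (\ref{1}), $U^{[1]} = \{ (x,v,t) \in V \x V \x \K : x \in U,\ x + tv \in U \}$ is the intersection of the preimages of the open set $U$ under the continuous maps $(x,v,t) \mapsto x$ and $(x,v,t) \mapsto x + tv$, hence open; and since $U^{[n]} = (U^{[n-1]})^{[1]}$, induction shows $U^{[n]}$ is open in its ambient space $V^{2^n} \x \K^{2^n - 1}$. Consequently the product $U_1^{[n_1]} \x \cdots \x U_r^{[n_r]}$, and its open subset $D$, are open in an ambient space $P$ of the form $E \x \K^N$ with $E$ a product of copies of the $V_i$. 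Since $\K$ is a good topological ring, $\K^\times$ is dense in $\K$, hence $(\K^\times)^N$ is dense in $\K^N$, hence $E \x (\K^\times)^N$ is dense in $P$; intersecting this dense set with the open set $D$ yields a subset $D^\times$ of $D$ --- the locus where all scalar arguments occurring in $A$ and $B$ are invertible --- which is dense in $D$.

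Finally I would invoke continuity. By hypothesis $A = B$ on $D^\times$; and since $W$, as well as any finite power of $W$ or of $\K$, is Hausdorff, the equalizer $\{ z \in D : A(z) = B(z) \}$ is closed in $D$. Containing the dense subset $D^\times$, it must be all of $D$, which is exactly the claim that the identity persists for all scalars.

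The topological step --- two continuous maps into a Hausdorff space that agree on a dense set agree everywhere --- is routine; the real content, and the only point where care is needed, is the bookkeeping that makes it applicable. One must isolate a class of admissible expressions $A, B$ broad enough to cover the identities actually invoked later (para-associativity, the tangent-groupoid axioms, the chain rule, the symmetry relations of the higher-order theory, and so on) yet narrow enough that continuity of $A$ and $B$ is manifest; and one must check that for such expressions the invertible-scalar locus $D^\times$ is genuinely dense in $D$, which is precisely where openness of the iterated domains $U^{[n]}$ is used. With this in place the lemma is immediate, and in practice it is applied case by case without formalizing the general framework.
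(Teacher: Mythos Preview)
Your proposal is correct and is precisely the argument the paper has in mind: the paper does not give a separate proof of this lemma at all --- the justification is already contained in the phrase ``by continuity and density'' in the statement itself, and the lemma is then applied case by case (as in Example~\ref{ex:lin} and the proofs of Theorems~\ref{th:topcal} and \ref{th:Schwarz}). What you have done is make explicit the bookkeeping (openness of the iterated domains, density of $(\K^\times)^N$, closedness of equalizers in a Hausdorff target) that the paper leaves to the reader; your final paragraph accurately describes how the lemma is actually used in practice.
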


\begin{example}\label{ex:lin}
For instance, {\em linearity of the first differential} is obtained by this principle as follows: first, for invertible $t$, by
direct and trivial computation, 
\begin{align}\label{eqn:add}
f^{[1]}(x,v+v',t) & = f^{[1]}(x,v,t) + f^{[1]}(x+vt,v',t) .
\end{align}
By prolongation of identities, if $f$ is $C^1$,
 this also holds for $t=0$, whence additivity
$df(x)(v+v')=df(x)v + df(x)v'$.
Homogeneity is proved similarly (see \cite{BGN04}). 
Thus
 in topological differential calculus, linearity of the differential $df(x)$ is a {\em theorem},
in contrast to he traditional approach by Fr\'echet differentiability, where  it is an {\em assumption}. 
By the philosophical principle known as  Occam's razor, eliminating this assumption
can be considered as a methodological advantage of 
topological differential calculus, compared to the usual one. 
Put differently, the idea of considering differential calculus as a ``linearization machine''  is a consequence, and
not an an input, in our approach. In this respect, one might say that we are coming back to the original ideas of
Newton and Leibniz -- who rather thought in terms of ``continuity of nature'' than in terms of ``approximation of nature
by linear algebra''. 
\end{example}

\subsection{The tangent groupoid}
The most fundamental structure of $U^{[1]}$ is the one of a groupoid.
Topology is not needed in the following

\begin{theorem}[The groupoid $U^\sett{1}$]\label{th:tangentgroupoid}
Assume $V$ is a module over a ring $\K$, $U \subset V$ is non-empty, and define $U^{[1]}$ by
Eqn.\ (\ref{1}). Then the pair $(G_1,G_0) = (U^{[1]}, U \times \K)$, with projections and unit section defined by
$$
\alpha (x,v,t):= (x,t), \qquad
\beta(x,v,t):=(x+tv,t), \qquad
1_{(x,t)} := (x,0,t),
$$
and product $\ast$  and inverse $i$  given by (when $x'=x+tv$ and $t'=t$)
$$
(x',v',t') \ast (x,v,t) = (x,v'+v,t), \qquad
(x,v,t)^{-1} = (x-tv,-v,t),
$$
is a groupoid which we shall denote by $U^\sett{1}$. 
For each fixed value of $t$, the same formulae define a groupoid denoted by
$$
U^\sett{1}_t:=(U_t,U) := (\{ (x,v) \mid (x,v,t) \in U^{[1]} \} , U) \, .
$$
\end{theorem}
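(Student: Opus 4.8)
The plan is to verify directly that $(U^{[1]}, U\times\K)$ satisfies all the axioms in Definition \ref{def:groupoid}; no topology or density argument is needed since $U^{[1]}$ is defined by the condition that both $x$ and $x+tv$ lie in $U$, and all the structure maps are given by explicit affine formulae. First I would check that the structure maps are well-defined, i.e., land in the claimed sets. For $\alpha$ and $\beta$ this is the content of the defining condition of $U^{[1]}$: if $(x,v,t)\in U^{[1]}$ then $x\in U$ so $(x,t)\in U\times\K$, and $x+tv\in U$ so $(x+tv,t)\in U\times\K$. For the unit section, $(x,0,t)\in U^{[1]}$ since $x+t\cdot 0=x\in U$, and clearly $\alpha(1_{(x,t)})=(x,t)=\beta(1_{(x,t)})$. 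For the inverse, if $(x,v,t)\in U^{[1]}$ then $x-tv$: we need $x-tv\in U$ and $(x-tv)+t(-v)\cdot(-1)$... wait, more carefully, $\beta(x-tv,-v,t)=(x-tv)+t(-v)=x-2tv$, which is not obviously in $U$ — so the inverse formula must instead be read with the understanding that $\alpha(x,v,t)=(x,t)$ and $\beta(x,v,t)=(x+tv,t)$, and the inverse of a morphism from $(x,t)$ to $(x+tv,t)$ should be a morphism from $(x+tv,t)$ back to $(x,t)$; setting $y=x+tv$, the formula $(x,v,t)^{-1}=(x-tv,-v,t)$ should read $(y,-v,t)=(x+tv,-v,t)$, so I would double-check the normalization and present it as: the inverse is $(x+tv,-v,t)$, which lies in $U^{[1]}$ because $x+tv\in U$ and $(x+tv)+t(-v)=x\in U$. (The stated formula "$(x-tv,-v,t)$" is presumably a typo or uses a different parametrization; I would reconcile this before writing up.)

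Next I would verify the product is well-defined on $G_1\times_{\alpha,\beta}G_1$. A composable pair has $\alpha(x',v',t')=\beta(x,v,t)$, i.e. $t'=t$ and $x'=x+tv$. Then $(x',v',t')\ast(x,v,t)=(x,v'+v,t)$, and I must check this is in $U^{[1]}$: need $x\in U$ (yes) and $x+t(v'+v)=x+tv+tv'=x'+t'v'\in U$ (yes, since $(x',v',t')\in U^{[1]}$). Then I check the source/target compatibility: $\alpha\bigl((x',v',t')\ast(x,v,t)\bigr)=\alpha(x,v'+v,t)=(x,t)=\alpha(x,v,t)$, and $\beta$ of the product is $(x+t(v'+v),t)=(x'+t'v',t)=\beta(x',v',t')$, as required.

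Then come the three algebraic laws. Associativity: given a triple composable chain (so the scalars all agree, $t''=t'=t$, and $x'=x+tv$, $x''=x'+tv'$), both bracketings of the triple product evaluate to $(x,v''+v'+v,t)$ — a one-line computation since the second and third components just add. The unit laws: $(x,v,t)\ast 1_{\alpha(x,v,t)}=(x,v,t)\ast(x,0,t)=(x,v+0,t)=(x,v,t)$, and $1_{\beta(x,v,t)}\ast(x,v,t)=(x+tv,0,t)\ast(x,v,t)=(x,0+v,t)=(x,v,t)$. The inverse laws: with the corrected inverse $(x+tv,-v,t)$, we get $(x+tv,-v,t)\ast(x,v,t)=(x,-v+v,t)=(x,0,t)=1_{(x,t)}=1_{\alpha(x,v,t)}$, and $(x,v,t)\ast(x+tv,-v,t)=(x+tv,v+(-v),t)=(x+tv,0,t)=1_{\beta(x,v,t)}$. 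This completes the groupoid axioms. For the final sentence about $U^\sett{1}_t$: fixing $t$ throughout, every formula above still makes sense and the composability condition becomes simply $x'=x+tv$, so the identical verification shows $(U_t,U)$ with the induced structure is a groupoid; one only notes that the scalar slot is now constant and may be dropped, and $\alpha(x,v)=x$, $\beta(x,v)=x+tv$, which is literally the restriction of the groupoid $U^\sett{1}$ to the "level set" $\{t\}$, an open-and-closed (in fact, just a subset-defined) sub-object.

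The only real subtlety — and the step I'd flag as the main thing to get right — is the bookkeeping around $\alpha$, $\beta$ and the exact form of the inverse: because the product formula $(x',v',t')\ast(x,v,t)=(x,v'+v,t)$ "keeps the source" rather than a symmetric midpoint, one must be careful that the inverse sends the morphism based at $(x,t)$ with target $(x+tv,t)$ to the morphism based at $(x+tv,t)$ with target $(x,t)$; reconciling the displayed formula with this constraint (and fixing the apparent typo) is where I would spend the most care. Everything else is a routine substitution.
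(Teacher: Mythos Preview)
Your approach is exactly the paper's: verify the axioms of Definition~\ref{def:groupoid} by direct substitution (the paper in fact only spells out the check $\beta(a\ast b)=\beta(a)$ and refers to \cite{Be15} for the rest, so your write-up is more complete). Your diagnosis of the inverse formula is also correct: with the stated $\alpha,\beta$ and product, the inverse of $(x,v,t)$ must be $(x+tv,-v,t)$, not $(x-tv,-v,t)$, and your verification of the inverse laws with the corrected formula is the right one.
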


\begin{proof}
The properties from Definition \ref{def:groupoid} are checked by straightforward computation. We urge the reader
to check this (full details are given in \cite{Be15}).
For instance, let us here just prove the condition $\beta(a\ast b)=\beta(a)$:
$$
\beta(x',v+v',t)= x + t(v+v') =  (x+tv) + tv' = x' + tv' = \beta(x',v',t) .
$$
Since $t$ remains ``silent'' in these computations, $(U_t,U)$ is also a groupoid.
\end{proof}

\begin{theorem}[Anchor of $U^\sett{1}$]\label{th:anchor}
For invertible $t$, the groupoid $U_t$ is isomorphic to the pair groupoid of $U$, and for $t=0$, it is the tangent bundle of $U$.
More precisely, for each invertible scalar $t$, the {\em anchor map} 
$$
\Upsilon : U_t \to U \times U, \quad (x,v) \mapsto (\beta(x,v),\alpha(x,v)) = (x+tv,x) 
$$
defines an isomorphism $(\Upsilon,\id_U)$ between the
groupoid $U_t$  and the pair groupoid
$\pG(U)=(U \times U,U)$. For $t=0$, the groupoid $U_t$ is a group bundle, given by
$$
(TU,U) := ( U \times V,U), \quad \alpha(x,v)=x=\beta(x,v), \quad
(x,v) \ast (x,v') = (x,v+v') .
$$
\end{theorem}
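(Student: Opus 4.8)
\textbf{Proof plan for Theorem \ref{th:anchor} (Anchor of $U^\sett{1}$).}

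The plan is to treat the two cases $t$ invertible and $t=0$ separately, since they are genuinely different phenomena. For the first case I would verify directly that $\Upsilon:U_t\to U\times U$, $(x,v)\mapsto(x+tv,x)$, together with $\id_U$ on objects, is a functor from the groupoid $U_t$ (as defined in Theorem \ref{th:tangentgroupoid}) to the pair groupoid $\pG(U)$, and then exhibit its inverse. Functoriality amounts to checking the three compatibilities from \ref{ssec:functors}: that $\alpha,\beta$ and the unit sections match (here $\alpha_{\pG}\circ\Upsilon(x,v)=x=\id_U(\alpha(x,v))$ and $\beta_{\pG}\circ\Upsilon(x,v)=x+tv=\id_U(\beta(x,v))$, and $\Upsilon(1_x)=\Upsilon(x,0)=(x,x)=1_x^{\pG}$), and that products are respected: for composable $(x',v',t)\ast(x,v,t)=(x,v'+v,t)$ with $x'=x+tv$, one has $\Upsilon(x,v'+v)=(x+t(v'+v),x)=(x'+tv',x)\ast_{\pG}(x',x)=\Upsilon(x',v')\ast_{\pG}\Upsilon(x,v)$. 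Since $t$ is invertible, the map $(y,x)\mapsto(x,t\inv(y-x))$ is a well-defined inverse on the nose (it lands in $U_t$ precisely because $x\in U$ and $x+t\cdot t\inv(y-x)=y\in U$), so $\Upsilon$ is a bijection, hence an isomorphism of groupoids. This step is entirely routine; none of it requires topology or density, only that $t$ be a unit in $\K$.

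For the case $t=0$, the extended domain specializes to $U_0=\{(x,v)\mid x\in U,\ x\in U\}=U\times V$, and the structure maps of Theorem \ref{th:tangentgroupoid} become $\alpha(x,v)=x=\beta(x,v)$, $1_x=(x,0)$, and $(x,v)\ast(x,v')=(x,v+v')$ (the composability condition $x'=x+0\cdot v=x$ is automatic, and inversion sends $(x,v)$ to $(x,-v)$). Thus every $\alpha$-fiber $[x]_\alpha=\{x\}\times V$ is a group isomorphic to the additive group $(V,+)$, with unit $(x,0)$; since $\alpha=\beta$ this is exactly a group bundle in the sense of the ``Groups'' example in Section \ref{sec:groupoids}, namely $(TU,U)=(U\times V,U)$. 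I would simply read all of this off by substituting $t=0$ into the formulas already proved in Theorem \ref{th:tangentgroupoid}.

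There is no serious obstacle here: the theorem is a direct unwinding of definitions, and the only point deserving care is the \emph{domain} bookkeeping in the invertible case — making sure that the proposed inverse $(y,x)\mapsto(x,t\inv(y-x))$ actually maps into $U_t=\{(x,v)\mid x\in U,\ x+tv\in U\}$ and not merely into $U\times V$, which is immediate once one notes $x+t\cdot t\inv(y-x)=y$. I would remark in passing that this is the precise sense in which $U^\sett{1}$ \emph{interpolates} between the pair groupoid (jets of order $1$, all fibers $t\neq 0$) and the tangent bundle (the fiber $t=0$), which is the structural content the theorem is meant to convey and the reason it is called an ``anchor''. $\qed$
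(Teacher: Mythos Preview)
Your proof is correct and essentially follows the paper's argument, with one small difference worth noting. The paper does not verify functoriality of $\Upsilon$ by hand; instead it invokes Theorem~\ref{th:anchor0}, which states that for \emph{any} groupoid the anchor $g\mapsto(\beta(g),\alpha(g))$ is automatically a functor into the pair groupoid. Thus the paper's proof reduces to a single line: $\Upsilon$ is a morphism by general principles, and for $t\in\K^\times$ it is bijective with inverse $(z,x)\mapsto(x,t\inv(z-x))$. Your direct verification of the functor axioms is of course also valid, and has the advantage of being self-contained; the paper's route buys economy by recognizing that functoriality of the anchor needs no computation specific to $U_t$. The treatment of $t=0$ is identical in both.
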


\begin{proof}
Recall from th.\ \ref{th:anchor0} that $\Upsilon$ always defines a groupoid morphism. 
Let $t \in \K^\times$, the group of invertible scalars.
Then $\Upsilon$
is bijective, with inverse given by
$\Upsilon^{-1} (z,x) = (\frac{1}{t} (z-x),x)$.
When $t=0$, we get $\beta(x,v)=x+0 v=x=\alpha(x)$, so $\alpha = \beta$, and
 we have a group bundle as described in the theorem.
\end{proof}

\begin{definition}\label{def:tangentgroupoid}
The groupoid $U^\sett{1}$ is called the
{\em tangent groupoid\footnote{This terminology follows Connes \cite{Co94}, Section II.5,
where 
 in case $\K=\R$ and for $t \in [0,1]$ the tangent groupoid is defined by a disjoint union
$TU \cup (\pG (U)) \times ]0,1]$.  }   of $U$}.
The group bundle $(TU,U)$ is called the {\em tangent bundle of $U$}, and the groupoid
$$
U^\sett{1}_{\fin} := (U, \{ (x,v,t) \in U^{[1]} \mid t \in \K^\times \})   \, \cong \,   \pG(U) \times \K^\times
$$
is called the {\em finite part of the tangent groupoid}. 
Note that, if $\K$ is a field, then $U^\sett{1}$ is the disjoint union of $U^\sett{1}_\fin$ and $TU$.
\end{definition}

\nin
One should think of the family $(U_t)_{t\in \K}$ of groupoids as a sort of contraction of the pair groupoid ($t=1$) towards
the tangent bundle ($t=0$), by letting $\beta$-fibers become more and more vertical as $t$ tends to $0$, as in 
Figure 2. 
\begin{figure}[h] \caption{Tangent groupoid}
\begin{center} 
\newrgbcolor{sqsqsq}{0.13 0.13 0.13}
\psset{xunit=0.9cm,yunit=0.25cm,algebraic=true,dotstyle=o,dotsize=3pt 0,linewidth=0.8pt,arrowsize=3pt 2,arrowinset=0.25}
\begin{pspicture*}(-4.3,-4.06)(29.08,4.1)
\pspolygon[linecolor=sqsqsq,fillcolor=sqsqsq,fillstyle=solid,opacity=0.1](0,2)(0,0)(2,-2)(2,0)
\pspolygon[linecolor=sqsqsq,fillcolor=sqsqsq,fillstyle=solid,opacity=0.1](3,4)(5,0)(5,-4)(3,0)
\pspolygon[linecolor=sqsqsq,fillcolor=sqsqsq,fillstyle=solid,opacity=0.1](6,5)(8,5)(8,-5)(6,-5)
\psline(0,0)(0,2)
\psline(0,2)(2,0)
\psline(2,0)(2,-2)
\psline(2,-2)(0,0)
\psline(0,2)(0,0)
\psline[linecolor=sqsqsq](0,2)(0,0)
\psline[linecolor=sqsqsq](0,0)(2,-2)
\psline[linecolor=sqsqsq](2,-2)(2,0)
\psline[linecolor=sqsqsq](2,0)(0,2)
\psline(3,0)(3,4)
\psline(3,4)(5,0)
\psline(3,0)(5,-4)
\psline(5,0)(5,-4)
\psline(6,0)(8,0)
\psline(6,-10.06)(6,6.3)
\psline(8,-10.06)(8,6.3)
\psline[linecolor=sqsqsq](3,4)(5,0)
\psline[linecolor=sqsqsq](5,0)(5,-4)
\psline[linecolor=sqsqsq](5,-4)(3,0)
\psline[linecolor=sqsqsq](3,0)(3,4)
\psline(0,0)(2,0)
\psline(3,0)(5,0)
\psline(1,1)(1,-1)
\psline(0,1)(2,-1)
\psline(7,-10.06)(7,6.3)
\psline(3,2)(5,-2)
\psline(4,2)(4,-2)
\rput[tl](0.26,3.80){$t=1$}
\rput[tl](4.22,3.80){$t=\frac{1}{2}$}
\rput[tl](8.2,3.80){$ t=0$}
\end{pspicture*}
\end{center}
\end{figure}

\ssk \nin
Using a fixed scalar $s$, we can relate $U_t$ and $U_{st}$. In \cite{Be15}, this has been formalized into a 
{\em double category structure} $U^\settt{1}$. In the present work, we will only use the following more
down-to-earth version of the scalar action:

\begin{theorem}[Rescaling]\label{th:scalars}
The group $\K^\times$ acts on $U^\sett{1}$ by automorphisms:
fix a scalar $s \in \K^\times$ and define $\Phi_s:U^\sett{1} \to U^\sett{1}$ by 
\begin{align*}
U^{[1]} \to U^{[1]}, & \quad (x,v,t) \mapsto \Phi_s (x,v,t):=(x,sv,ts^{-1}) ,
\\
U \times \K  \to  U \times \K, & \quad
(x,t) \mapsto \Phi_s(x,t) := (x,ts^{-1}) .
\end{align*}
Then  $\Phi_s$ is an automorphism of $U^\sett{1}$,
and $\Phi_{st}=\Phi_s \Phi_t$, $\Phi_1 = \id$. Moreover,
the finite part $U_\fin^\sett{1}$, and the tangent bundle $TU$, are stable under $\Phi_s$.
\end{theorem}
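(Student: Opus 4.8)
The plan is to verify directly that the family of maps $\Phi_s$ defined in the statement forms a group action by groupoid automorphisms, proceeding in three stages: first checking $\Phi_s$ is a well-defined bijection of the two underlying sets, then checking functoriality (compatibility with $\alpha$, $\beta$, units, product, inverse), and finally checking the group-action identities $\Phi_{st}=\Phi_s\Phi_t$, $\Phi_1=\id$ together with the stability of the two distinguished subgroupoids.

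\emph{Well-definedness and bijectivity.} I would first observe that $\Phi_s$ maps $U^{[1]}$ to itself: if $x\in U$ and $x+tv\in U$, then since $(ts^{-1})(sv)=tv$, the point $(x,sv,ts^{-1})$ again satisfies $x\in U$ and $x+(ts^{-1})(sv)\in U$, so it lies in $U^{[1]}$. Bijectivity is immediate: the inverse of $\Phi_s$ on both $U^{[1]}$ and $U\times\K$ is $\Phi_{s^{-1}}$, which exists because $s\in\K^\times$; this also foreshadows the group-action identity. I would also note that the diagram on objects and morphisms is compatible, i.e.\ $\Phi_s$ acts compatibly on the object set $U\times\K$ via $(x,t)\mapsto(x,ts^{-1})$ — the first coordinate is untouched and only the scalar is rescaled.

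\emph{Functoriality.} Here one checks the conditions (1)--(2) of Subsection \ref{ssec:functors} using the explicit formulae from Theorem \ref{th:tangentgroupoid}. For source: $\alpha(\Phi_s(x,v,t))=\alpha(x,sv,ts^{-1})=(x,ts^{-1})=\Phi_s(x,t)=\Phi_s(\alpha(x,v,t))$. For target: $\beta(\Phi_s(x,v,t))=\beta(x,sv,ts^{-1})=(x+(ts^{-1})(sv),ts^{-1})=(x+tv,ts^{-1})=\Phi_s(x+tv,t)=\Phi_s(\beta(x,v,t))$; the key cancellation is again $(ts^{-1})(sv)=tv$, valid in the module $V$. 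Units: $\Phi_s(1_{(x,t)})=\Phi_s(x,0,t)=(x,0,ts^{-1})=1_{(x,ts^{-1})}=1_{\Phi_s(x,t)}$. For the product, note that composability $(x',v',t')\ast(x,v,t)$ requires $x'=x+tv$, $t'=t$; applying $\Phi_s$ to both factors gives $(x',sv',ts^{-1})$ and $(x,sv,ts^{-1})$, which are composable since $x'=x+tv=x+(ts^{-1})(sv)$, and their product is $(x,sv'+sv,ts^{-1})=(x,s(v'+v),ts^{-1})=\Phi_s(x,v'+v,t)=\Phi_s((x',v',t')\ast(x,v,t))$. Inversion is analogous: $\Phi_s((x,v,t)^{-1})=\Phi_s(x-tv,-v,t)=(x-tv,-sv,ts^{-1})=(x-(ts^{-1})(sv),-sv,ts^{-1})=(x,sv,ts^{-1})^{-1}=(\Phi_s(x,v,t))^{-1}$.

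\emph{Group action and stability.} The identities $\Phi_1=\id$ and $\Phi_{st}=\Phi_s\Phi_t$ follow by direct inspection: $\Phi_s(\Phi_t(x,v,t_0))=\Phi_s(x,tv,t_0t^{-1})=(x,stv,t_0t^{-1}s^{-1})=(x,(st)v,t_0(st)^{-1})=\Phi_{st}(x,v,t_0)$, using commutativity and invertibility of scalars, and likewise on $U\times\K$; hence each $\Phi_s$ is an automorphism with inverse $\Phi_{s^{-1}}$. For stability of the finite part $U^\sett{1}_\fin$, observe that $t\in\K^\times$ iff $ts^{-1}\in\K^\times$, so $\Phi_s$ preserves the set of morphisms with invertible scalar component; for stability of $TU$, note $\Phi_s$ maps the fiber $\{t=0\}$, i.e.\ $U\times V$, to itself via $(x,v)\mapsto(x,sv)$ (a fiberwise scalar action on the tangent bundle). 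I do not anticipate a genuine obstacle here: everything reduces to the single algebraic cancellation $(ts^{-1})(sv)=tv$ in the $\K$-module $V$ plus associativity/commutativity of multiplication in $\K$; the only mild subtlety is bookkeeping the object-map versus morphism-map components of $\Phi_s$ consistently, and making sure each functoriality axiom is checked against the precise formulae of Theorem \ref{th:tangentgroupoid} rather than informal versions of them.
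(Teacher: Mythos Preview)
Your proof is correct and follows essentially the same approach as the paper's own proof: both verify compatibility with $\alpha$ and $\beta$ via the cancellation $(ts^{-1})(sv)=tv$, then appeal to direct computation for the remaining automorphism and group-action axioms, and finally argue stability of $U^\sett{1}_\fin$ and $TU$ from $ts^{-1}\in\K^\times$ and $0\cdot s^{-1}=0$ respectively. The only difference is that you spell out in full the checks for units, product, inversion, and the identity $\Phi_{st}=\Phi_s\Phi_t$, which the paper leaves as ``by direct check''.
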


\begin{proof}
The action is well-defined: this follows from
 $\alpha(x,sv,ts^{-1})= (x,ts^{-1}) = s.\alpha(x,v,t)$ and
$\beta(x,sv,ts^{-1}) = (x + ts^{-1}sv,ts^{-1})=(x+tv,ts^{-1})=s.\beta(x,v,t)$.
By direct check, for each $s \in \K^\times$, the formulae from the theorem define an automorphism.
Since $ts^{-1} \in \K^\times$ if $t,s \in \K^\times$, the finite part is stable, and since
$0s^{-1}=0$, it follows that $TU$ is stable.
\end{proof}

\subsection{Tangent maps}
Every map $f$ extends to a morphism of finite parts of tangent groupoids.
By ``extends'' we mean that the base map, on the level of objects, is $f$ itself, resp.\ $f \times \id_\K$.
On the level of the total set of the groupoid, the extended map is essentially given by the difference quotient map
$f^{[1]}$ defined by (\ref{eqn:differencequotient}):
given $\K$-modules $V,V'$,  non-empty subsets $U \subset V, U' \subset V'$ and a map $f:U \to U'$,  let
\begin{align}
f^\sett{1}_{\rm fin}: U^\sett{1}_{\rm fin}  \to U^\sett{1}_{\rm fin} , & \quad
(x,v,t) \mapsto \bigl( f(x) , f^{[1]}(x,v,t) ,t \bigr)  ,
\\
f^\sett{1}_t : U^\sett{1}_t  \to U^\sett{1}_t , & \quad
(x,v) \mapsto \bigl( f(x) , f^{[1]}(x,v,t)  \bigr) ,
\end{align}
where in the second line $t \in \K^\times$ is fixed.

\begin{theorem}[Tangent maps]\label{th:tangentmap}
The map $f^\sett{1}_{\rm fin}: U^\sett{1}_{\rm fin}  \to U^\sett{1}_{\rm fin}$ is a functor, and so is
$f^\sett{1}_t : U^\sett{1}_t  \to U^\sett{1}_t$ for each fixed $t \in \K^\times$. 
The functor $f^\sett{1}_\fin$ commutes with each automorphism $\Phi_s$ with  $s \in \K^\times$:
$f^\sett{1}_\fin \circ \Phi_s = \Phi_s \circ f^\sett{1}_\fin$.
\end{theorem}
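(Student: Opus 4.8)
The plan is to verify the two functor axioms of Subsection \ref{ssec:functors} directly, exploiting that on the \emph{finite} part every scalar that occurs ($t$, and later $ts^{-1}$) is invertible, so that $f^{[1]}(x,v,t)=t^{-1}\bigl(f(x+tv)-f(x)\bigr)$ is literally a quotient and is well defined for an \emph{arbitrary} map $f$ --- no continuity or density argument is needed here. The conceptually cleanest route, which I would actually follow, is to reduce everything to Theorem \ref{th:anchor}: for fixed $t\in\K^\times$ the square relating $f^\sett{1}_t$ to the tautological pair-groupoid functor $\pG(f)\colon(y,x)\mapsto(f(y),f(x))$ commutes,
\[
\Upsilon\circ f^\sett{1}_t=\pG(f)\circ\Upsilon .
\]

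To prove this one only computes, with $\Upsilon(x,v)=(x+tv,x)$,
\[
\Upsilon\bigl(f(x),f^{[1]}(x,v,t)\bigr)=\bigl(f(x)+t\,f^{[1]}(x,v,t),\,f(x)\bigr)=\bigl(f(x+tv),\,f(x)\bigr)=\pG(f)\bigl(\Upsilon(x,v)\bigr).
\]
Since $\pG(f)$ is obviously a functor of pair groupoids and $\Upsilon$ is a groupoid isomorphism (Theorem \ref{th:anchor}), $f^\sett{1}_t=\Upsilon^{-1}\circ\pG(f)\circ\Upsilon$ is a functor. Running this for every $t\in\K^\times$ at once --- equivalently, using the identification $U^\sett{1}_\fin\cong\pG(U)\times\K^\times$ of Definition \ref{def:tangentgroupoid}, under which $f^\sett{1}_\fin$ becomes $\pG(f)\times\id_{\K^\times}$ --- shows $f^\sett{1}_\fin$ is a functor over the base map $f\times\id_\K$.

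For readers preferring a hands-on check: the clause $\alpha\circ f^\sett{1}_t=f\circ\alpha$, compatibility with units (since $f^{[1]}(x,0,t)=0$, the unit $1_x=(x,0)$ is sent to $1_{f(x)}$), and compatibility with inversion are immediate, and $\beta\circ f^\sett{1}_t=f\circ\beta$ is the one line $\beta\bigl(f(x),f^{[1]}(x,v,t)\bigr)=f(x)+t f^{[1]}(x,v,t)=f(x+tv)=f\bigl(\beta(x,v)\bigr)$. The only clause with any content is compatibility with products: for composable arrows, i.e.\ $(x',v')\ast(x,v)=(x,v'+v)$ with $x'=x+tv$, one needs $f^{[1]}(x,v'+v,t)=f^{[1]}(x+tv,v',t)+f^{[1]}(x,v,t)$, and this is exactly identity (\ref{eqn:add}) --- valid, as recorded in Example \ref{ex:lin}, for every invertible $t$ by direct computation.

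For the last assertion, I would evaluate both sides of $f^\sett{1}_\fin\circ\Phi_s=\Phi_s\circ f^\sett{1}_\fin$ on $(x,v,t)$: the left side is $\bigl(f(x),\,f^{[1]}(x,sv,ts^{-1}),\,ts^{-1}\bigr)$ and the right side is $\bigl(f(x),\,s\,f^{[1]}(x,v,t),\,ts^{-1}\bigr)$, so the statement reduces to the ``homogeneity'' identity $f^{[1]}(x,sv,ts^{-1})=s\,f^{[1]}(x,v,t)$, which is immediate from $x+(ts^{-1})(sv)=x+tv$ and $(ts^{-1})^{-1}=st^{-1}$. I do not expect any genuine obstacle: the whole theorem is a formal consequence of Theorem \ref{th:anchor} together with the two elementary difference-quotient identities (additivity and homogeneity), all holding for invertible scalars with no regularity hypothesis on $f$; the only point requiring care is to stay inside the finite part, so that $t$ and $ts^{-1}$ are units, the $t=0$ fibre and its link with the classes $C^n$ being left to later sections.
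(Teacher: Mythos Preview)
Your proposal is correct and follows essentially the same approach as the paper: both argue via the anchor isomorphism $\Upsilon$ of Theorem \ref{th:anchor}, reducing $f^\sett{1}_t$ to the pair-groupoid functor $f\times f$ (your $\pG(f)$), and both note that the direct verification of the product axiom is precisely identity (\ref{eqn:add}). The only cosmetic difference is in the commutation with $\Phi_s$: the paper transports $\Phi_s$ through the anchor to $(\id_U\times\id_U\times s^{-1}\id_\K,\,\id_U\times s^{-1}\id_\K)$ and observes that this obviously commutes with $f\times f\times\id_{\K^\times}$, whereas you compute directly and reduce to the homogeneity identity $f^{[1]}(x,sv,ts^{-1})=s\,f^{[1]}(x,v,t)$ --- same content, different packaging.
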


\begin{proof}
Once more, we invite the reader to check by direct computation that
 properties (1), (2) from  \ref{ssec:functors} hold (see \cite{Be15} for  detailed computations). 
E.g., 
\begin{align*}
\beta \circ f_t^\sett{1} (x,v)& = f(x) + t \, \frac{f(x+tv)-f(x)}{t} = f(x+tv) = f\circ \beta(x,v),
\end{align*}
and property (2) is directly proved from (\ref{eqn:add}).
More conceptually, these computations may be interpreted as follows: for invertible $t$, the
anchor isomorphism $\Upsilon$ from Theorem \ref{th:anchor} intertwines $f_t^\sett{1}$ and $f \times f$,
$$
\Upsilon  \circ f_t^\sett{1} (x,v) = \bigl(\alpha ( f_t^\sett{1}(x,v)), \beta (f_t^\sett{1}(x,v)) \bigr) = (f(x),f(x+tv)) =
(f \times f) \circ \Upsilon(x,v).
$$
Now,
it is easily checked that $(f \times f,f)$ is a morphism $\pG(U) \to \pG(U')$, hence, via
$\Upsilon$,  $f_t^\sett{1}$ is also groupoid morphism. 
On the level of finite parts, via $\Upsilon$, the morphism corresponds to
$(f \times f \times \id_{\K^\times}, f \times \id_{\K^\times})$. 
In the same way, $\Phi_s$ corresponds to
$(\id_U \times \id_U \times s^{-1} \id_\K, \id_U \times s^{-1} \id_\K)$,
which obviously commutes with the morphism given by the preceding formulas.
\end{proof}

\nin
A map $f$ extends to a functor of tangent groupoids if, and only if, it is $C^1$:

\begin{theorem}[Topological calculus]\label{th:topcal}
Assume that $\K$ is a good topological ring, $V,V'$ topological $\K$-modules and
$U \subset V,U \subset V'$ open, and $f:U \to U'$. Then the following are equivalent:
\begin{enumerate}
\item
$f$ is of class $C^1$ over $\K$,
\item
the finite part $f^\sett{1}_{\rm fin}$ from the preceding theorem extends to a continuous functor
 $f^\sett{1}:U^\sett{1} \to (U')^\sett{1}$. 
\end{enumerate}
If this is the case, $f^\sett{1}$ commutes with the $\K^\times$-action, as in the preceding theorem, and, 
for $t=0$, the {\em tangent map} $Tf:=f_0:TU \to TU'$ is {\em linear} in fibers:
$$
\forall x \in U, v,v' \in V, s \in \K: \qquad 
\begin{matrix}
Tf(x,v+v') &= & Tf(x,v) + Tf(x,v'), \\ Tf(x,sv) &= &s.Tf(x,v) .
\end{matrix}
$$
\end{theorem}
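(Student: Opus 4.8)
The plan is to prove the equivalence (1)$\Leftrightarrow$(2) first, then derive the remaining assertions (commutation with $\K^\times$ and fiberwise linearity of $Tf$) as consequences. For (2)$\Rightarrow$(1): if $f^\sett{1}_{\rm fin}$ extends to a continuous functor $f^\sett{1}:U^\sett{1}\to (U')^\sett{1}$, then in particular its total-space component is a continuous map $U^{[1]}\to (U')^{[1]}$ extending $(x,v,t)\mapsto(f(x),f^{[1]}(x,v,t),t)$; composing with the projection to the $W$-coordinate gives a continuous extension of the difference quotient map $f^{[1]}:U^{[1]}\to W$, which is exactly Definition \ref{def:C1K} of class $C^1_\K$. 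For (1)$\Rightarrow$(2): assume $f$ is $C^1$, so that by definition a continuous $f^{[1]}:U^{[1]}\to W$ extending the difference quotient exists. Define $f^\sett{1}$ on objects by $f\times\id_\K$ and on the total set by $(x,v,t)\mapsto(f(x),f^{[1]}(x,v,t),t)$; this is continuous because $f$ and $f^{[1]}$ are. It remains to check it is a functor, i.e.\ properties (1) and (2) of \ref{ssec:functors}. The point is that on the finite part $U^\sett{1}_{\rm fin}$ (where $t\in\K^\times$) these identities hold by Theorem \ref{th:tangentmap}, and the finite part is dense in $U^\sett{1}$ (density of $\K^\times$ in $\K$, which holds since $\K$ is a good topological ring); so by continuity (Lemma \ref{la:density}, prolongation of identities) they extend to all of $U^\sett{1}$. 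Concretely: compatibility with $\alpha,\beta,1$ are continuous identities between the two sides, verified on the dense finite part; and the multiplicativity $f^\sett{1}(a\ast b)=f^\sett{1}(a)\ast f^\sett{1}(b)$ — whose content is precisely $f^{[1]}(x,v+v',t)=f^{[1]}(x,v,t)+f^{[1]}(x+tv,v',t)$, equation (\ref{eqn:add}) — holds for invertible $t$ by direct computation and hence, by continuity and density, for all $t$.

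Given (2), the commutation $f^\sett{1}\circ\Phi_s=\Phi_s\circ f^\sett{1}$ follows the same way: Theorem \ref{th:tangentmap} gives this identity on the finite part, both sides are continuous in the variable $(x,v,t)\in U^{[1]}$, and the finite part is dense, so the identity prolongs to all of $U^\sett{1}$. Alternatively one observes $\Phi_s$ leaves $U^\sett{1}$ invariant and acts on the $t$-coordinate by $t\mapsto ts^{-1}$, while $f^\sett{1}$ fixes the $t$-coordinate and acts on $(x,v)$ through $(f(x),f^{[1]}(x,v,t))$; since $f^{[1]}(x,sv,ts^{-1})=s\,f^{[1]}(x,v,t)$ holds for $t\in\K^\times$ by homogeneity on the finite part, it holds for all $t$ by prolongation.

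For the statement about $Tf=f_0$: specializing $f^\sett{1}$ to $t=0$ gives a map $TU\to TU'$, $(x,v)\mapsto(f(x),f^{[1]}(x,v,0))=(f(x),df(x)v)$. Additivity $df(x)(v+v')=df(x)v+df(x)v'$ is obtained by setting $t=0$ in (\ref{eqn:add}), exactly as in Example \ref{ex:lin}: the identity $f^{[1]}(x,v+v',t)=f^{[1]}(x,v,t)+f^{[1]}(x+tv,v',t)$ holds for invertible $t$, hence by continuity at $t=0$, and at $t=0$ the shifted argument $x+tv$ becomes $x$, yielding additivity. Homogeneity $df(x)(sv)=s\,df(x)v$ comes the same way from the homogeneity identity $f^{[1]}(x,sv,ts^{-1})=s\,f^{[1]}(x,v,t)$ evaluated at $t=0$ (equivalently, from $f^{[1]}(x,sv,t)=s\,f^{[1]}(x,v,st)$, valid for invertible $t,s$, letting $t\to0$); see \cite{BGN04}.

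The main obstacle is purely the bookkeeping of the density argument: one must make sure every functoriality identity to be prolonged is genuinely an equality of \emph{continuous} functions of $(x,v,t)$ (or of a pair of such triples on the fiber product $U^{[1]}\times_{\alpha,\beta}U^{[1]}$), and that the subset where $t$ is invertible is dense in the relevant domain — this uses that $U$ is open and $\K^\times$ is dense in $\K$, so that $\{(x,v,t):t\in\K^\times\}$ is dense in $U^{[1]}$ and similarly the invertible-$t$ part is dense in the fiber product. Once that topological point is granted, everything reduces to the already-proven algebraic identities of Theorems \ref{th:tangentgroupoid} and \ref{th:tangentmap}, plus equation (\ref{eqn:add}), and no genuinely new computation is needed.
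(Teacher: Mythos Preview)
Your proof is correct and follows essentially the same approach as the paper: the equivalence is reduced to the existence of a continuous extension of $f^{[1]}$, and the functoriality, scalar-commutation, and fiberwise linearity are all obtained from the corresponding identities on the finite part via the density principle (Lemma \ref{la:density}), exactly as the paper sketches with reference to Theorem \ref{th:tangentmap} and Example \ref{ex:lin}. You have simply made explicit the bookkeeping (density of the invertible-$t$ locus in $U^{[1]}$ and in the fiber product) that the paper leaves to \cite{Be15}.
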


\begin{proof}
The proof is spelled out in full detail in \cite{Be15}:
(1) is equivalent to saying that the difference quotient map $f^{[1]}$ extends, which in turn is equivalent to saying that
$f^\sett{1}(x,v,t)=(f(x),f^{[1]}(x,v,t),t)$ extends to a continuous map on $U^{[1]}$. 
We have to prove that this extended map still is a functor commuting with the scalar action.
But this follows from the ``density principle'' (Lemma \ref{la:density}) and
 the fact that the finite part is a functor. 
(This is essentially the argument from Example \ref{ex:lin}.)
\end{proof}

\subsection{Chain rule: the ``derivation functor''}
Most of the basic results of calculus carry over to topological calculus, and the proofs are very simple: prove the
claim by direct computation for invertible scalars $t$, then by continuity and density the result carries over to $t=0$.
Here an example:

\begin{theorem}[Chain rule]
Let $U,U',U''$ be open in topological  $\K$-modules $V,V',V''$, respectively, and  $g:U' \to U''$ and $f:U \to U'$. 
Then, if $g$ and $g$ are $C^1$, then so is $g \circ f$, and we have the chain rule
$$
(g \circ f)^\sett{1}=g^\sett{1} \circ f^\sett{1} ,
$$
or, equivalently, $\forall t \in \K$:
$(g \circ f)_t^\sett{1}=g_t^\sett{1} \circ f_t^\sett{1}$. In particular,
$T(g\circ f)=Tf \circ Tg$.
\end{theorem}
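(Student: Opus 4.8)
The plan is to reduce the chain rule to a pointwise identity at the level of difference quotients, which holds trivially for invertible $t$, and then invoke the density principle (Lemma \ref{la:density}) together with Theorem \ref{th:topcal} to pass to $t=0$. First I would note that the composition $g \circ f$ is again $C^1$: this is precisely what Theorem \ref{th:topcal} buys us once we know that $f^{[1]}$ and $g^{[1]}$ extend continuously, since the formula below exhibits $(g\circ f)^{[1]}$ as a continuous expression in these extended maps. So the content is really the identity of functors.

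The key computation is at the level of the finite parts, where everything is explicit. For $t \in \K^\times$ and $(x,v,t)\in U^{[1]}$, one has by direct substitution
\begin{equation*}
(g\circ f)^{[1]}(x,v,t) = \frac{g(f(x+tv)) - g(f(x))}{t} = \frac{g\bigl(f(x) + t\,f^{[1]}(x,v,t)\bigr) - g(f(x))}{t} = g^{[1]}\bigl(f(x),\, f^{[1]}(x,v,t),\, t\bigr),
\end{equation*}
where the middle equality just uses $f(x+tv) = f(x) + t\,f^{[1]}(x,v,t)$. Unpacking the definitions of $g^\sett{1}_\fin$ and $f^\sett{1}_\fin$ from Section \ref{sec:calculus}, this says exactly that $(g\circ f)^\sett{1}_\fin = g^\sett{1}_\fin \circ f^\sett{1}_\fin$ on $U^\sett{1}_\fin$, and the same computation with $t$ fixed gives $(g\circ f)^\sett{1}_t = g^\sett{1}_t \circ f^\sett{1}_t$. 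Both sides are functors by Theorem \ref{th:tangentmap}, so we have an equality of functors on the finite part.

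To finish, I would argue that both $(g\circ f)^\sett{1}$ and $g^\sett{1}\circ f^\sett{1}$ are continuous functors on all of $U^\sett{1}$ extending the common map on the dense subset $U^\sett{1}_\fin$ (density of $U^\sett{1}_\fin$ in $U^\sett{1}$ being the content of ``$\K$ good'', i.e. $\K^\times$ dense in $\K$). A continuous map on $U^{[1]}$ is determined by its restriction to the dense subset where $t$ is invertible, so the two extensions coincide; specializing to $t=0$ gives $T(g\circ f) = Tf \circ Tg$ (note the order-reversal is purely a matter of the convention $\Upsilon \circ f_t^\sett{1}$ corresponding to $f\times f$, already used in the proof of Theorem \ref{th:tangentmap}, combined with the contravariant-looking placement of composition in the functor notation $g^\sett{1}\circ f^\sett{1}$). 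The only point requiring a little care — the ``main obstacle'', such as it is — is making sure the extended composite $g^\sett{1}\circ f^\sett{1}$ is genuinely defined and continuous on all of $U^\sett{1}$, which amounts to checking that $f^\sett{1}$ maps $U^\sett{1}$ into $(U')^\sett{1}$ (immediate from $\alpha,\beta$ compatibility and openness of $U'$) so that the composition makes sense before taking limits; once that is in place, continuity of a composition of continuous maps and the density principle do the rest.
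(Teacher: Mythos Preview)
Your proof is correct and follows the same strategy as the paper: verify the identity for invertible $t$, then invoke the density principle (Lemma \ref{la:density}) to extend to all $t\in\K$. The only difference is presentational: where you carry out the direct computation $(g\circ f)^{[1]}(x,v,t)=g^{[1]}\bigl(f(x),f^{[1]}(x,v,t),t\bigr)$, the paper phrases the same check conceptually via the anchor isomorphism $\Upsilon$ (so that, for $t\in\K^\times$, the chain rule becomes the triviality $(g\circ f)\times(g\circ f)=(g\times g)\circ(f\times f)$ on pair groupoids); the paper in fact remarks that the direct computation you give is the one spelled out in \cite{Be15}. One small remark: your attempt to justify the order in $T(g\circ f)=Tf\circ Tg$ as a ``convention'' is unnecessary --- this is simply a typographical slip in the statement, and the correct conclusion from $(g\circ f)^\sett{1}=g^\sett{1}\circ f^\sett{1}$ at $t=0$ is of course $T(g\circ f)=Tg\circ Tf$.
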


\begin{proof}
A proof by direct computation is given in \cite{Be15}. In a conceptual way, that proof may be presented as follows:
for $t \in \K^\times$, as in the proof of th.\ \ref{th:tangentmap}, via the anchor
 isomorphism $\Upsilon$,  the chain rule translates to
$(g \circ f )  \times (g \circ f) = (g \times g) \circ (f \times f)$, which clearly is true. 
By the Density Lemma \ref{la:density}, equality holds for all $t \in \K$, and hence in particular for $t=0$, whence the usual
chain rule.
\end{proof}

\nin
The ``derivation symbol''  $\{ 1 \}$ is thus a functor from the  category of (open) subsets of topological $\K$-modules, with 
$C^1$-maps as morphisms, to the category of (topological) groupoids with their (continuous) morphisms.
Topological differential calculus is the theory of this functor. 
Of course, now we must talk about {\em second and higher order calculus}: what happens if we apply this functor {\em several times}?
The first thing we have to do is to ``copy and save''   our functor:

\begin{definition}\label{def:copies}
For every $n \in \N$, we denote by $\{ n \}$, $\{ n \}_t$, $U^\sett{n}$, $f^\sett{n}$, etc., a copy, called {\em of $n$-th
generation},  of the objects defined above for $n=1$.
\end{definition}

\nin
Before explaining what to do with these copies, let's pause for a more classical intermezzo:

\section{Intermezzo on manifolds}\label{sec:manifolds} 

\subsection{Manifolds}
By general principles, the derivation functor $\sett{1}$ extends to the category of smooth manifolds and smooth maps:

\begin{theorem}\label{th:mfd}
For every Hausdorff manifold $M$, there is a groupoid $M^\sett{1} = (M^{[1]},M \times \K)$, agreeing with the groupoid
$U^\sett{1}$ from Theorem \ref{th:tangentgroupoid} when $M=U$ is open in a topological $\K$-module.
Smooth maps between manifolds correspond precisely to continuous functors between these groupoids.
For any fixed $t\in \K$, the groupoid $M^\sett{1}$ gives rise to a  groupoid $M^\sett{1}_t = (M_t,M)$ which is
isomorphic to $\pG(M)$ for $t \in \K^\times$, and to the tangent bundle $TM$ for $t=0$. 
There is a canonical $\K^\times$-action on $M^\sett{1}$, commuting with all functors $f^\sett{1}$.
\end{theorem}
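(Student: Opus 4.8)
The plan is to run the usual ``glue along an atlas'' construction, with the chain rule $(g\circ f)^\sett{1}=g^\sett{1}\circ f^\sett{1}$ playing the role of the cocycle identity. Fix a smooth atlas $(\phi_i,U_i)_{i\in I}$ of $M$, with $\phi_i:U_i\xrightarrow{\sim}V_i:=\phi_i(U_i)$ open in a topological $\K$-module and transition maps $\phi_{ij}:=\phi_i\circ\phi_j^{-1}$ on $\phi_j(U_i\cap U_j)$. Each $\phi_{ij}$ is $C^1$, so by Theorem \ref{th:topcal} it prolongs to a continuous functor $\phi_{ij}^\sett{1}$ between the corresponding open subgroupoids of $V_j^\sett{1}$ and $V_i^\sett{1}$; the chain rule gives $\phi_{ii}^\sett{1}=\id$ and $\phi_{ij}^\sett{1}\circ\phi_{jk}^\sett{1}=\phi_{ik}^\sett{1}$ on overlaps, so each $\phi_{ij}^\sett{1}$ is a homeomorphism onto an open set with inverse $\phi_{ji}^\sett{1}$. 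I would then define $M^{[1]}$ as the quotient of $\bigsqcup_{i}V_i^{[1]}$ by the relation identifying $\xi\in(\phi_j(U_i\cap U_j))^{[1]}$ with $\phi_{ij}^\sett{1}(\xi)$ --- an equivalence relation by the cocycle identity --- equipped with the quotient topology, for which every $V_i^{[1]}\hookrightarrow M^{[1]}$ is an open embedding. The glued object space is of course just $M\times\K$.

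Next I would transport the groupoid data of Theorem \ref{th:tangentgroupoid}. Chart by chart, the maps $\alpha,\beta:V_i^{[1]}\to V_i\times\K$, the unit section $1$, the partial product $\ast$ and the inversion $i$ are intertwined by the functors $\phi_{ij}^\sett{1}$ --- that is exactly what it means for $\phi_{ij}^\sett{1}$ to be a functor --- so they descend to globally defined $\alpha,\beta:M^{[1]}\to M\times\K$, $1:M\times\K\to M^{[1]}$, $\ast$ on $M^{[1]}\times_{\alpha,\beta}M^{[1]}$ and $i:M^{[1]}\to M^{[1]}$; the axioms of Definition \ref{def:groupoid} are local and hold in each chart, hence globally. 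This is the groupoid $M^\sett{1}=(M^{[1]},M\times\K)$, and the one-chart atlas $\phi=\id_U$ recovers $U^\sett{1}$ on the nose. Freezing $t$ gives the subgroupoid $M^\sett{1}_t=(M_t,M)$ with $M_t:=\alpha^{-1}(M\times\{t\})$; for $t\in\K^\times$ the anchor isomorphisms $\Upsilon:(V_i)_t\xrightarrow{\sim}\pG(V_i)$ of Theorem \ref{th:anchor} are natural (they intertwine $\phi_{ij}^\sett{1}$ frozen at $t$ with $\phi_{ij}\times\phi_{ij}$, as in the proof of Theorem \ref{th:tangentmap}), so they glue to an isomorphism $M_t\xrightarrow{\sim}\pG(M)$; for $t=0$ one gets the group bundle glued in charts by $(x,v)\mapsto(\phi_{ij}(x),d\phi_{ij}(x)v)$, i.e.\ the tangent bundle $TM$.

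For the morphism statement, a smooth $f:M\to N$ has chart representatives $f_{ji}:=\psi_j\circ f\circ\phi_i^{-1}$ relative to an atlas $(\psi_j,W_j)$ of $N$, each $C^1$; by Theorem \ref{th:topcal} these prolong to continuous functors $f_{ji}^\sett{1}$, and the chain rule yields the overlap compatibility $\psi_{j'j}^\sett{1}\circ f_{ji}^\sett{1}=f_{j'i'}^\sett{1}\circ\phi_{i'i}^\sett{1}$, so they glue to a continuous functor $f^\sett{1}:M^\sett{1}\to N^\sett{1}$ over $f\times\id_\K$, independent of the atlases. Conversely, a continuous functor $M^\sett{1}\to N^\sett{1}$ over a base map $g\times\id_\K$ restricts in charts to continuous functors, which by Theorem \ref{th:topcal} forces each chart representative of $g$ to be $C^1$, i.e.\ $g$ smooth, and the functor then equals $g^\sett{1}$. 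Functoriality $(g\circ f)^\sett{1}=g^\sett{1}\circ f^\sett{1}$ and the fibrewise linearity of $Tf:=f_0$ are inherited chart by chart from the module case. Finally, the rescaling automorphisms $\Phi_s$ of Theorem \ref{th:scalars} commute with every tangent functor, in particular with the $\phi_{ij}^\sett{1}$, hence glue to a $\K^\times$-action on $M^\sett{1}$ stabilising each $M_t$; and since $\Phi_s$ commutes with each $f_{ji}^\sett{1}$ (Theorem \ref{th:tangentmap}) it commutes with $f^\sett{1}$.

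Apart from this bookkeeping, the one step needing a genuine argument is that $M^{[1]}$ is Hausdorff when $M$ is (assuming, as elsewhere in the paper, that $V$ and $\K$ are Hausdorff): the projection $\alpha:M^{[1]}\to M\times\K$ is continuous with Hausdorff fibers --- in a chart the fiber over $(x,t)$ is the open set $\{v\mid x+tv\in V_i\}$ --- so points with distinct $\alpha$-images are separated by preimages of disjoint neighbourhoods in $M\times\K$, while points with equal $\alpha$-image lie in a common chart domain $V_i^{[1]}$, which is open in $V\times V\times\K$ and hence Hausdorff. This is exactly the argument showing $TM$ is Hausdorff, and I expect it to be the only non-formal point; everything else merely repeats the module-level theorems one chart at a time.
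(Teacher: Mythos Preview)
Your gluing of the \emph{sets} $M^{[1]}$, $M\times\K$ and of the structure maps $\alpha,\beta,1,i$ is fine, and so is the treatment of $f^\sett{1}$, of $M^\sett{1}_t$, and of the $\K^\times$-action. The genuine gap is in the definition of the global product $\ast$. You claim that the chart-level products on $V_i^{[1]}\times_{\alpha,\beta}V_i^{[1]}$ ``descend'' to $M^{[1]}\times_{\alpha,\beta}M^{[1]}$, but for this the sets $V_i^{[1]}\times_{\alpha,\beta}V_i^{[1]}$ would have to \emph{cover} $M^{[1]}\times_{\alpha,\beta}M^{[1]}$, and in general they do not: a composable pair $(a,b)$ corresponds, via the anchor, to a triple of points $(r,q,p)$ in $M$ (target of $a$, common source/target, source of $b$), and there is no reason all three lie in one connected chart domain --- already for $t=1$ this is the obvious fact that three arbitrary points of $M$ need not sit in a single chart. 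So your argument, as written, only produces a \emph{local} groupoid.

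This is precisely the place where the paper invokes the Hausdorff hypothesis, and not where you put it. The paper's point (cf.\ the discussion following the statement, and Lemma~D.3 of \cite{Be15}) is that when $a$ and $b$ are not close, Hausdorffness lets one separate the relevant base points by pairwise disjoint chart domains and then work in the resulting \emph{disconnected} chart, in which the formula for $\ast$ from Theorem~\ref{th:tangentgroupoid} still makes sense. Without this step one gets only a local groupoid. Incidentally, your final paragraph has the same defect: two elements with the same $\alpha$-image $(x,t)$ need \emph{not} lie in a common $V_i^{[1]}$, since their $\beta$-images $x+tv$ and $x+tw$ may land in different charts; so ``equal $\alpha$-image $\Rightarrow$ common chart'' fails for exactly the reason above. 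In short: the Hausdorff assumption is doing real work in the \emph{construction} of $\ast$, not merely in a separation argument for $M^{[1]}$.
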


The proof (\cite{Be15}) is quite straightforward, but in order to spell it out properly, we have to give a 
formal and precise definition of what we mean by ``manifold over general base fields or rings'': charts, atlasses, and
all that.
This is carried out in \cite{Be16}: it turns out that, formally, {\em a manifold structure (an atlas) is an ordered groupoid}.
For the purposes of the present work, it is not really necessary to go into the details; let us just mention that
the partial order structure comes from the natural inclusion of charts, and the groupoid structure reflects 
equivalence of charts if they have same chart domain. 
Using this language, we can describe the local procedure of
gluing together the sets $U_i^\sett{1}$ from chart domains $U_i$, using the chain rule, to a set $M^\sett{1}$.
In the same way, the groupoid law on $M^\sett{1}$ is defined locally, near the unit section.
However,  in order to define it
globally, we need the Hausdorff assumption from the theorem (cf.\ Lemma D.3 of \cite{Be15}:  to define 
$a \ast b$, if $a,b$ are sufficiently close to each other, we can work in one connected local chart, but else
we have to use possibly non-connected chart domains obtained from two disjoint chart domains which exist
due to the Hausdorff assumption. Without that assumption we would only get {\em local groupoids}, which suffices
for many purposes.
If $\K$ is a field, the gluing procedure can be avoided by presenting the tangent groupoid ``\`a la Connes'' (cf.\
def.\ \ref{def:tangentgroupoid} and footnote there), and thus 
 this item seems not to be related to questions involving non-Hausdorff groupoids studied, e.g., in
Non-commutative Geometry.)

\subsection{Lie groups and Lie groupoids}

\begin{definition}
A {\em Lie group} is a group $(G,e,\cdot)$ together with a manifold structure such that the group law
$\cdot$ and inversion are differentiable.
A {\em Lie groupoid} is a groupoid $G = (G_1,G_0,\alpha,\beta,1,\ast,i)$ together with manifold structures on
$G_1,G_0$ and on $G_1 \times_{\alpha,\beta} G_0$ such that all structure maps $\alpha,\beta,1,\ast,i$ are 
differentiable.\footnote{We follow here the pattern of the general definition given in the $n$-lab,
\url{https://ncatlab.org/nlab/show/Lie+groupoid}. 
Of course, under suitable assumptions some conditions may be weakened, e.g., 
in \cite{Ma05}, def.\ 1.1.3, it is required that $\alpha,\beta$ be submersions, which in the real finite dimensional
case implies that 
that $G_1 \times_{\alpha,\beta} G_0$ is a manifold. 
In our setting, this implication does in general not hold.}
\end{definition}

\begin{theorem}\label{th:Liegroupoid1}
Let $U$ open in $V$ and $t\in \K$. Then $U^\sett{1}$ and $U^\sett{1}_t$ are Lie groupoids. 
Likewise, if $M$ is a Hausdorff manifold,  $M^\sett{1}$ and $M^\sett{1}_t$ are Lie groupoids.
\end{theorem}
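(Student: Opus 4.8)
The plan is to show, in each of the four cases, that all structure maps of the groupoid in question are smooth ($C^n_\K$ for every $n$) with respect to the obvious manifold structures; the groupoid axioms themselves are already granted by Theorems \ref{th:tangentgroupoid} and \ref{th:mfd}. I would start with $U^\sett{1}$. Since $U$ is open in $V$, the set $U^{[1]}$ of (\ref{1}) is open in $V\times V\times\K$, hence a manifold, and so is $U\times\K$. The source, target and unit maps $\alpha(x,v,t)=(x,t)$, $\beta(x,v,t)=(x+tv,t)$, $1_{(x,t)}=(x,0,t)$, together with the inversion $i$, all given by the explicit formulas of Theorem \ref{th:tangentgroupoid}, are composed out of $\K$-linear maps, the bilinear scalar action $\K\times V\to V$, and addition of vectors; each of these is $C^n_\K$ for all $n$ by the elementary rules of topological differential calculus (see \cite{BGN04, Be08}), and compositions of such maps are again smooth, so $\alpha,\beta,1,i$ are smooth. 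For $U^\sett{1}_t$ the same formulas apply with $t$ held fixed, and are a fortiori smooth.

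The only genuine choice is the manifold structure on the set of composable pairs $G_1\times_{\alpha,\beta}G_1$; in this generality one cannot extract it from a submersion hypothesis, as the footnote to the definition of Lie groupoid points out. I would supply it by hand: the map $(x,v,v',t)\mapsto\bigl((x+tv,v',t),(x,v,t)\bigr)$ is a bijection from
\[
\widetilde U:=\{\,(x,v,v',t)\in V\times V\times V\times\K \mid x\in U,\ x+tv\in U,\ x+tv+tv'\in U\,\}
\]
onto $U^{[1]}\times_{\alpha,\beta}U^{[1]}$, and $\widetilde U$ is open in $V\times V\times V\times\K$; this is the declared manifold structure. In these coordinates the product $\ast$ becomes $(x,v,v',t)\mapsto(x,v+v',t)$, which is affine, hence smooth; for $U^\sett{1}_t$ one fixes $t$. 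This establishes that $U^\sett{1}$ and $U^\sett{1}_t$ are Lie groupoids.

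For a Hausdorff manifold $M$, Theorem \ref{th:mfd} already gives the groupoids $M^\sett{1}$ and $M^\sett{1}_t$, glued from the local models $U_i^\sett{1}$ (resp.\ $(U_i)^\sett{1}_t$) along the maps $\varphi_{ij}^\sett{1}$, where the $\varphi_{ij}$ are the transition maps of a chosen atlas of $M$. By the chain rule these form an atlas, and since $\varphi$ being $C^n_\K$ for all $n$ forces $\varphi^{[1]}$ (hence $\varphi^\sett{1}$) to be as well — this is the inductive definition of the classes $C^n_\K$, Definition \ref{def:CnK} — the transition maps of $M^\sett{1}$ are smooth, so $M^\sett{1}$ and $M^\sett{1}_t$ are manifolds; likewise the composable pairs are glued from the $\widetilde U_i$ and form a manifold. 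In each chart $\alpha,\beta,1,i$ are given by the local formulas above, hence are smooth. The product $\ast$ is smooth near the unit section by the local formula, and for an arbitrary composable pair $(a,b)$ one invokes the Hausdorff hypothesis to enclose $a$ and $b$ in a single (possibly disconnected) chart domain, where the local formula again applies; thus $\ast$ is smooth and $M^\sett{1}$, $M^\sett{1}_t$ are Lie groupoids.

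The main obstacle is this last global step: it is the one place where the Hausdorff assumption is essential (as in Lemma D.3 of \cite{Be15}), and without it one gets only a \emph{local} Lie groupoid. A secondary, bookkeeping obstacle is verifying compatibility of the charts $\widetilde U_i$ on the composable pairs across the atlas, which follows from the chain rule applied ``in two variables''. Everything else reduces to the standing fact that polynomial maps between $\K$-modules are smooth, which is part of the basic toolkit of topological differential calculus.
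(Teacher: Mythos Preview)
Your proof is correct and follows essentially the same route as the paper: identify $U^{[1]}$ and $U^{[1]}\times_{\alpha,\beta}U^{[1]}$ with open subsets of $V^2\times\K$ and $V^3\times\K$ respectively, observe that all structure maps are given by polynomial formulas in vector addition and scalar multiplication and hence are smooth, and then pass to the manifold level by the gluing principles already established. Your version is in fact more detailed than the paper's, particularly in spelling out the role of the Hausdorff hypothesis for the global product and the compatibility of the charts on the composable pairs.
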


\begin{proof}
Since $U$ is open in $V$, $U^{[1]} = \{ (x,v,t) \mid x+tv \in U \}$ is open in $V \times V \times \K$, and the
set $U^{[1]} \times_{\alpha,\beta} U^{[1]}= \{ (x',v',t';x,v,t) \mid t=t', x'=x+tv \in U ,x'+tv' \in U \}$ is naturally identified with
$$
\{ (x,v,v',t) \mid x \in U, \, v,v' \in V, \, t \in \K :  x+tv \in U, \, x+t(v+v') \in U \}
$$
which is open in $V^3 \times \K$. Thus these three sets are smooth manifolds (with atlas a single chart induced by the
ambiant linear space), and  all structure maps are smooth since they are all given by explicit formulas
involving only scalar multiplication and vector addition, which are continuous, whence differentiable. 
Again, by the principles explained above, the result carries over to the manifold level.
\end{proof}

\nin
What we have seen so far implies that  a Lie group, or a Lie groupoid, carries $3$ groupoid structures, that are compatible
with each other:
first, it is a group (resp.\ groupoid) in its own right;
second, as said above, its manifold structure is an (ordered) groupoid;
third, by Theorem \ref{th:mfd}, $G^\sett{1}$ carries the tangent groupoid structure.
It is time to explain what it means to say that ``one groupoid structure is compatible with another''.
Even if we neglect the ordered groupoid structure corresponding  to the atlas, there remains a
{\em double groupoid structure}.
And we have not even started to develop {\em higher order calculus}, where similar considerations
lead to  {\em $n$-fold groupoids}.

\section{Double and higher groupoids}\label{sec:doublegroupoids} 

Higher order calculus arises by {\em iterating} the operation of ``differentiation'', giving rise to things like
$f'', f'''$, or $\partial_u \partial_v f$, or
 $d(df)$, or $T(Tf)$... Such iteration procedures may look harmless, but can lead to complicated objects.
 For instance, let's compute  the {\em second order slope} $f^{[2]} =(f^{[1]})^{[1]}$: it is given by
$ f^{[2]} \bigl( ( v_0,v_1,t_1), (v_2,v_{12},t_{12}) , t_2 \bigr) = $ 
\begin{align}\label{eqn:4.1}
\quad \qquad  &=  \frac{1}{t_2} \Bigl(    f^{[1]} \bigl((v_0,v_1,t_1) + t_2 
(v_2,v_{12},t_{12})\bigr) - f^{[1]}(v_0,v_1,t_1)  \Bigr)
\cr
\quad \qquad  & =
\frac{
f\bigl(v_0 + t_2 v_2 + (t_1+ t_2 t_{12}) (v_1+ t_2 v_{12}) \bigr) - f(v_0 + t_2 v_2)}{t_2(t_1 + t_2 t_{12}) }
-
\frac{f(v_0+t_1 v_1) - f(v_0)}{t_2 t_1}
\end{align}
and it extends, if $f$ is $C^2$, to a map $f^{[2]}$ defined on
  the set $U^{[2]} = (U^{[1]})^{[1]}$  given by 
\begin{equation*}
\Bigsetof{ (v_0,v_1,v_2,v_{12},t_1,t_2,t_{12}) \in V^4 \times \K^3 } {\begin{array}{c}
v_0 \in U \\  v_0 + t_1 v_1 \in U,  \\
  v_0 + t_2 v_2 \in U \\ v_0 + t_2 v_2 +  (t_1+t_2 t_{12} )(v_1+  t_2 v_{12}) \in U \end{array}} .
\end{equation*} 
Clearly, it is hopeless to try to understand $f^{[n]}$ for $n\geq 3$ by writing out an ``explicit formula'' like
(\ref{eqn:4.1}) -- we need a more conceptual approach. The notion of {\em $n$-fold groupoid} provides such a
conceptual framework. In the setting described above,
 we apply the ``derivation symbol'' $\sett{ 1}$ several times:
 first, it gives a groupoid $U^\sett{1}$, and next a {\em double groupoid} $(U^\sett{1})^\sett{1}$, and so on.
Moreover,  we shall see that the outcome of this iteration
depends on the order in which things are performed, hence our {\em notation} has to take account of that:
 we will apply first the operator
$\sett{1}$, then its copy $\sett{2}$, and write
$U^\sett{1,2}:=(U^\sett{1})^\sett{2}$, and so on (see eqn. (\ref{3})).

\subsection{Ehresmann's definition}
Following Charles Ehresmann, one can define double and higher groupoids in a very short way (reproduced, e.g., on
the $n$-lab):

\begin{definition} A $0$-fold groupoid is just a set. 
A {\em (strict) $n$-fold groupoid} is a groupoid internal to the category of (strict)  $(n-1)$-fold groupoids.
\end{definition}

\nin
The drawback of this short definition is that it is not very explicit, and moreover that it uses the vocabulary of ``big'' categories
in order to define something ``small'', that is, an object of usual algebra. Let us give definitions avoiding these drawbacks.
Since all our structures will be ``strict'', we suppress this term in the sequel. First of all, we spell out Ehresmann's definition in
more detail:

\begin{definition}
An {\em $n$-fold groupoid} for $n=0$ is just a set without structure,  morphisms being ordinary maps,
 and for $n=1$, it is a pair of sets
$G=(G_0,G_1)$ with structure maps $\alpha,\beta,1,i,\ast$ as in Def.\ \ref{def:groupoid}, and morphisms are functors $f=(f_0,f_1)$ as
defined in \ref{ssec:functors}. 
For $n\geq 1$, it is a groupoid $G = (G_0,G_1,\alpha,\beta,1,i,\ast)$, such that:
\begin{enumerate}
\item
$G_0$ and $G_1$ carry each the structure of an $(n-1)$-fold groupoid,
\item
$G_1 \times_{\alpha,\beta} G_1$ is a sub-$(n-1)$-fold groupoid of $G_1 \times G_1$,
\item
the structure maps $\alpha,\beta,1,i,\ast$ are morphisms of $(n-1)$-fold groupoids.
\end{enumerate}
A {\em morphism of $n$-fold groupoids} is a groupoid morphism $f=(f_0,f_1)$ such that both
$f_0$ and $f_1$ are morphisms of $(n-1)$-fold groupoids.
\end{definition} 

 \subsection{The Brown-Spencer definition of double groupoids}
 In \cite{BrSp76}, Brown and Spencer give a ``purely algebraic'' definition of double groupoids, in terms of 
 structure maps and defining algebraic identities. This is obtained by writing out, for
  $n=2$, the preceding definition in full detail:
$G_1 = (G_{11},G_{10})$ and $G_0 = (G_{01},G_{00})$ are groupoids,
$\alpha = (\alpha_1:G_{11} \to G_{01},\alpha_0:G_{01} \to G_{00})$, and likewise $\beta$, are groupoid morphisms,
and so are the unit sections; that is, we have $4$ sets and diagrams of mappings between them:
\begin{equation}\label{eqn:doublediagram}
\begin{matrix}
G_{11}  &    \rightrightarrows & G_{01}  \cr
 \downdownarrows & &  \downdownarrows 
 \cr
G_{10} &     \rightrightarrows & G_{00}
 \end{matrix} , \qquad \qquad
\begin{matrix}
G_{11}  &   \stackrel { }  \leftarrow & G_{01}  \cr
 \uparrow & & \uparrow 
 \cr
G_{10} &   \stackrel { }  \leftarrow & G_{00} 
 \end{matrix}  ,
\end{equation}
as well as products $\ast$ on $G_{11}$ and $G_{01}$ and $\bullet$ on $G_{11}$ and $G_{10}$,
such that 
\begin{enumerate}
\item
each of the four edges of these diagrams with its structure maps is a groupoid,
\item
each pair of corresponding projections (like $(\alpha_{1}:G_{11}\to G_{10},\alpha_0:G_{01}\to G_{00})$) 
and each pair of unit sections is a morphism of groupoids,
\item
the product $\ast$ is a morphism from $(G_{11} \times_{G_{10}} G_{11},\bullet \times \bullet)$ to $(G_{01},\bullet)$
(and likewise for $\bullet$ and $\ast$ exchaged).
\end{enumerate}

\nin
Whereas it is straightforward to write (1) and  (2)  in equational form (like, e.g.,
$\alpha_1 (b \ast a ) =\alpha_1(b) \ast \alpha_1(a)$, cf.\ \cite{Be15}), this is slightly less obvious for (3):
 the map $A:=\ast :G_{11} \times_{G_{10}} G_{11} \to G_{11}$, $(a,b) \mapsto a \ast b$ is  a morphism for $\bullet$ iff
$$
A  ((a,b) \bullet  (c,d)) = A (a,b) \bullet A(c,d) ,
$$
that is, iff the  following {\em interchange law} holds:
\begin{equation}\label{eqn:interchange}
(a \bullet c) \ast  (b \bullet d) = (a \ast b) \bullet (c \ast d) ;
\end{equation}
Summing up, a double groupoid is given by four sets $(G_{11},G_{10},G_{01},G_{00})$ and certain structure maps
satisfying algebraic conditions expressing (1) -- (3), like (\ref{eqn:interchange}).
We shall often indicate double groupoids by diagrams of the form (\ref{eqn:doublediagram}).

\begin{remark}
It follows from (1), (2), (3) that inversion of $\ast$ is an automorphism of $\bullet$ -- which may look surprising since
it is an {\em anti}automorphism for $\ast$. So, in the particular case where $\ast = \bullet$, both must be commutative (cf.\
example \ref{ex:doublegroup} below).
\end{remark}

\begin{example}[The pair groupoid of a groupoid]\label{ex:PGL} 
Let $L=(L_1,L_0)$ be a groupoid.
Then the pair groupoid $\pG(L)$ of $L$ is a double groupoid:  
$$
\begin{matrix}
L_1 \times L_1  &   \rightrightarrows & L_{1}  \cr
 \downdownarrows  & &   \downdownarrows 
 \cr
L_{0} \times L_0 &    \rightrightarrows & L_{0}
 \end{matrix} 
 $$
 The horizontal groupoid laws are pair groupoids of $L_1$, resp.\ $L_0$, and the vertical ones come from the given
 one on $L$. A conceptual explanation is given by the fact that the symbol $\pG$ is a {\em product preserving 
 functor}, taking values in groupoids (cf.\ next chapter). 
In particular, taking $L = \pG(M)$, the pair groupoid of a set $M$, we get the {\em double pair groupoid}
$\pG^\sftwo(M)= \pG (\pG(M))$ of $M$:
$$
\begin{matrix}
M^4  &   \rightrightarrows & M^2  \cr
 \downdownarrows  & &   \downdownarrows 
 \cr
M^2  &    \rightrightarrows & M
 \end{matrix} 
 $$
\end{example}

\begin{example}[Double groups]\label{ex:doublegroup}  A {\em double group} is  a double groupoid of the form 
$$
\begin{matrix}
G_{11}  &    \rightrightarrows & 1   \cr
  \downdownarrows  & &   \downdownarrows 
 \cr
1  &    \rightrightarrows & 1 
 \end{matrix} 
$$
that is, a set $G=G_{11}$ with a single unit $1$  and two group laws $\ast$ and $\bullet$ satisfying the interchange law.
We infer
$a\ast b = (a \bullet 1) \ast (1 \bullet b)= (a \ast 1) \bullet (1 \ast b)= a \bullet b$, whence $\ast = \bullet$, and now the interchange
law implies that the group must be {\em commutative}.
Conversely, every commutative group does indeed define a double group. 
This apparenty trivial observation  explains why {\em abelian} groups lie at the bottom of so many
mathematical structures: they ``are'' precisely the double groups.
\end{example}

\subsection{Notation, hypercubes, and small characterization}
It should be obvious now that a $3$-fold groupoid will consist of $8$ sets, each corresponding to the vertex of a cube,
 and so on:
an $n$-fold groupoid is given by $2^n$ sets that correspond to the vertices of an {\em $n$-hypercube}. It is now time to
improve our  {\em notation}: 

\begin{definition}
Let $N \subset \N$ be a finite subset, for instance, the {\em standard subset} $\sfn$ given by (\ref{eqn:sfn}). 
The {\em $N$-hypercube} has {\em vertex set} $\cP(N)$ (power set of $N$), and {\em edges} $(B,A)$, where 
$B \subset A \subset N$, and
$A$ has one element more than $B$.
We denote such an edge by $\ol{BA}$.
 A {\em face} is given by four vertices
$(D,C,B,A)$ such that $\ol{DC},\ol{DB},\ol{BA},\ol{CA}$
are edges.
\end{definition}

\begin{theorem}[Small characterization of $n$-fold groupoids]\label{th:smallchar}
An $n$-fold groupoid is given by $2^n$ sets $(G^A)_{A \in \cP(\sfn)}$, indexed by the natural hypercube $\cP(\sfn)$, 
and structure maps, satisfying:
\begin{enumerate}
\item
for each {\em edge} $(B,A)$, we have projections $\alpha^{A,B},\beta^{A,B}$, unit sections $1^{A,B}$,  inversions
$i^{A,B}$ and products $\ast^{A,B}$ turning  $(G^A,G^B)$ into a {\em groupoid},
\item
for each {\em face} $(D,C,B,A)$ we have a {\em double groupoid} (as defined algebraically in the preceding subsection)
$$
\begin{matrix}
G^A  &    \rightrightarrows & G^C  \cr
\downdownarrows  & & \downdownarrows 
 \cr
G^B  &    \rightrightarrows & G^D .
 \end{matrix} 
$$
\end{enumerate}
\end{theorem}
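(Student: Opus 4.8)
The plan is to prove the characterization by induction on $n$, identifying the ``cubical data'' of the statement with the iterated internal-groupoid data of the definition of an $n$-fold groupoid. The base case $n=0$ is trivial, the hypercube $\cP(\emptyset)$ being a single vertex; the induction step will moreover recover Definition \ref{def:groupoid} when $n=1$ (one edge, no face) and the Brown--Spencer definition of double groupoid --- the notion used in clause (2) --- when $n=2$ (one square, in either orientation). So suppose $n\geq 1$ and that the statement holds for $n-1$.

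The key observation is the splitting
$$
\cP(\{1,\ldots,n\}) \;=\; \cP(\{1,\ldots,n-1\}) \;\sqcup\; \bigl\{\, A\cup\{n\} \mid A\subset\{1,\ldots,n-1\}\,\bigr\},
$$
which displays the $n$-cube as two parallel $(n-1)$-cubes joined by the $2^{n-1}$ edges in ``direction $n$''. Given an $n$-fold groupoid $G=(G_1,G_0,\alpha,\beta,1,i,\ast)$, i.e.\ a groupoid internal to $(n-1)$-fold groupoids, I would set $G^A := (G_0)^A$ for $n\notin A$ and $G^{A\cup\{n\}} := (G_1)^A$ for $A\subset\{1,\ldots,n-1\}$, where the right-hand families are those furnished by the inductive hypothesis applied to the $(n-1)$-fold groupoids $G_0$ and $G_1$. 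Read backwards, these formulas turn any cubical family satisfying (1)--(2) into two families indexed by $\cP(\{1,\ldots,n-1\})$, the candidates for $G_0$ and $G_1$.

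Next I would match the structure maps direction by direction. Every edge $\ol{BA}$ of the $n$-cube is of exactly one of three kinds: $n\notin A$ (then $n\notin B$, and it is an edge of the cube underlying $G_0$); $n\in B$ (then $n\in A$, and after deleting $n$ it is an edge of the cube underlying $G_1$); or $A=B\cup\{n\}$, a direction-$n$ edge. Likewise every face either avoids direction $n$ --- in which case, according as its base vertex does or does not contain $n$, it is a face of the cube underlying $G_1$ or of the one underlying $G_0$ --- or involves direction $n$. By the inductive hypothesis, the first two kinds of edge together with the faces avoiding direction $n$ are exactly the data turning $G_0$ and $G_1$ into $(n-1)$-fold groupoids, so on that part the correspondence is the inductive hypothesis itself. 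A direction-$n$ edge $\ol{A,A\cup\{n\}}$ is to be equipped with the groupoid $\bigl((G_1)^A,(G_0)^A\bigr)$ whose structure maps are the $A$-components $\alpha^A,\beta^A,1^A,i^A,\ast^A$ of those of $G$; that this is a groupoid for every $A$ is precisely the statement that the (equational) groupoid axioms for $G$ hold as identities of $(n-1)$-fold-groupoid morphisms, i.e.\ componentwise. Finally, a face involving direction $n$ has directions $\{j,n\}$ with $j\in\{1,\ldots,n-1\}$ and a base vertex $D$ containing neither $j$ nor $n$; its vertices give the square
$$
\begin{matrix}
(G_1)^{D\cup\{j\}} & \rightrightarrows & (G_1)^D \cr
\downdownarrows & & \downdownarrows \cr
(G_0)^{D\cup\{j\}} & \rightrightarrows & (G_0)^D
\end{matrix}
$$
whose horizontal edges are the $\ol{D,D\cup\{j\}}$-edge-groupoids of $G_1$ and $G_0$ and whose vertical edges are direction-$n$ groupoids as above. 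I would then check that ``this square is a double groupoid'' is equivalent to the conjunction: its four edges are groupoids (already available); the vertical projections and unit sections $\alpha^{D\cup\{j\}},\alpha^D,\beta^{D\cup\{j\}},\beta^D,1^{D\cup\{j\}},1^D$ intertwine the horizontal products --- which is exactly ``$\alpha,\beta,1$ are morphisms of $(n-1)$-fold groupoids'', restricted to the edge $\ol{D,D\cup\{j\}}$; and the horizontal projections intertwine the vertical products together with the interchange law (\ref{eqn:interchange}) --- which together unwind ``$G_1\times_{\alpha,\beta}G_1$ is a sub-$(n-1)$-fold groupoid of $G_1\times G_1$ and $\ast$ is a morphism of $(n-1)$-fold groupoids'', restricted to that edge (commuting with source and target gives the first clause, commuting with the product gives the interchange law).

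Assembling these correspondences yields, in both directions, a bijection between $n$-fold-groupoid structures on a pair $(G_1,G_0)$ and cubical families satisfying (1)--(2); it is compatible with morphisms, since a morphism of $n$-fold groupoids is a pair of morphisms of $(n-1)$-fold groupoids, which by induction amounts to a family $(f^A)$ of maps respecting all edge-groupoid structures. The point requiring real care is the bookkeeping: one must track scrupulously which sub-hypercubes play the roles of $G_0$ and $G_1$ and which orientation in each face is ``horizontal'', and --- this is the reason \emph{edges and faces already suffice}, with no higher coherence conditions --- one must observe that in a strict structure every defining identity of a groupoid internal to $(n-1)$-fold groupoids involves at most two of the $n$ directions at once (a groupoid axiom lives in a single direction; a structure map of direction $n$ being a morphism with respect to a product of direction $j$ lives in the two directions $\{j,n\}$), and that the inductive hypothesis genuinely accounts for \emph{all} edges and faces not involving direction $n$. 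I expect this combinatorial verification to be the main obstacle; once it is set up correctly, all the algebraic identities are immediate rewritings.
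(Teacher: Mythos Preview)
Your proposal is correct and follows essentially the same approach as the paper: an induction on $n$ based on splitting the $n$-hypercube into two parallel $(n-1)$-hypercubes (the paper calls them ``old'' and ``new'' vertices, according as they avoid or contain the last index), identifying them with $G_0$ and $G_1$, and matching the remaining edges and faces with the internal-groupoid data in the new direction. Your write-up is in fact considerably more detailed than the paper's, which merely sketches the induction step and refers to \cite{Be15b} for the full argument.
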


\begin{remark}
{\em Small $n$-fold categories} are defined and characterized in the same way, just by forgetting the inversion maps.
\end{remark}

\begin{remark}
Here, the {\em total set} of the hypercube is $\sfn$. But one may define in the same way $n$-fold groupoids with any total
set $N\subset \N$ such that $\vert N \vert = n$, and then use the {\em notation}
$G^{A;N}$ for the {\em vertex sets} and $\alpha^{B,A;N}$ etc. for the {\em edge projections}.
\end{remark}

\nin
The proof  of the theorem, by induction, is straightforward (see \cite{Be15b}, Th.\ B.2).
To illustrate, say, the induction step from $n=3$ to $n+1=4$,  consider Figure 3 showing a 
 {\em tesseract} ($4$-cube).
In the figure, vertices are labelled by $ij$, to abbreviate  $\{i,j\}$, etc. 
Let us call a vertex $A$
\begin{enumerate}
\item[\emph{old}] 
if $n+1 \notin A$,
\item[\emph{new}]
if $n+1 \in A$; then $A = B \cup \{ n+1 \}$, where $B$ is an ``old'' vertex.
\end{enumerate}
The old vertices form a $3$-cube (on the left), and so do the new vertices (right). 
Now,
the proof of the theorem consists, essentially, in contemplating this figure.
The result is likely to be folklore among specialists
in higher category theory. However, \cite{FP10}  is the only reference I was able to find. 
\begin{figure}[h]
\caption{A tesseract by assembling two cubes}
\begin{center}
\psset{xunit=0.8cm,yunit=0.5cm,algebraic=true,dotstyle=o,dotsize=3pt 0,linewidth=0.8pt,arrowsize=3pt 2,arrowinset=0.25}
\begin{pspicture*}(-0.22,-3.99)(19.84,4.06)
\psline[linecolor=lightgray](14.42,3.48)(3.36,1.54)
\psline(3.36,1.54)(4.94,0.86)
\psline(4.94,0.86)(3.1,-0.04)
\psline(3.1,-0.04)(3.14,-3.22)
\psline(3.36,1.54)(1.52,0.64)
\psline(1.52,0.64)(3.1,-0.04)
\psline[linecolor=lightgray](1.52,0.64)(12.58,2.58)
\psline(14.42,3.48)(12.58,2.58)
\psline(1.52,0.64)(1.56,-2.54)
\psline(1.56,-2.54)(3.14,-3.22)
\psline(14.42,3.48)(16,2.8)
\psline(16,2.8)(14.16,1.9)
\psline[linecolor=lightgray](3.1,-0.04)(14.16,1.9)
\psline(12.58,2.58)(14.16,1.9)
\psline[linecolor=lightgray](16,2.8)(4.94,0.86)
\psline(12.58,2.58)(12.62,-0.6)
\psline(14.16,1.9)(14.2,-1.28)
\psline(12.62,-0.6)(14.2,-1.28)
\psline[linecolor=lightgray](12.62,-0.6)(1.56,-2.54)
\psline[linecolor=lightgray](3.14,-3.22)(14.2,-1.28)
\psline(14.42,3.48)(14.46,0.3)
\psline(14.46,0.3)(12.62,-0.6)
\psline(16,2.8)(16.04,-0.38)
\psline(16.04,-0.38)(14.2,-1.28)
\psline(16.04,-0.38)(14.46,0.3)
\psline[linecolor=lightgray](3.4,-1.64)(14.46,0.3)
\psline(3.4,-1.64)(4.98,-2.32)
\psline[linecolor=lightgray](4.98,-2.32)(16.04,-0.38)
\psline(4.98,-2.32)(3.14,-3.22)
\psline(3.4,-1.64)(1.56,-2.54)
\psline(4.94,0.86)(4.98,-2.32)
\psline(3.36,1.54)(3.4,-1.64)
\rput[tl](3.08,-3.32){$ \emptyset $}
\rput[tl](0.92,-2.4){$ 1 $}
\rput[tl](5.28,-2.34){2}
\rput[tl](2.66,-0.08){3}
\rput[tl](0.84,0.86){13}
\rput[tl](5.06,1.3){23}
\rput[tl](3.08,2.14){123}
\rput[tl](13.76,-1.5){4}
\rput[tl](16.4,-0.44){24}
\rput[tl](16.32,2.96){234}
\rput[tl](14.56,4.06){1234}
\rput[tl](11.88,-0.62){14}
\rput[tl](3.66,-1.26){12}
\rput[tl](14.7,0.7){124}
\rput[tl](11.58,2.46){134}
\rput[tl](13.44,1.78){34}
\begin{scriptsize}
\psdots[dotstyle=*,linecolor=blue](14.42,3.48)
\psdots[dotstyle=*,linecolor=blue](3.36,1.54)
\psdots[dotstyle=*,linecolor=blue](4.94,0.86)
\psdots[dotstyle=*,linecolor=blue](3.1,-0.04)
\psdots[dotstyle=*,linecolor=blue](3.14,-3.22)
\psdots[dotstyle=*,linecolor=darkgray](1.52,0.64)
\psdots[dotstyle=*,linecolor=darkgray](12.58,2.58)
\psdots[dotstyle=*,linecolor=darkgray](1.56,-2.54)
\psdots[dotstyle=*,linecolor=darkgray](14.16,1.9)
\psdots[dotstyle=*,linecolor=darkgray](3.36,1.54)
\psdots[dotstyle=*,linecolor=darkgray](16,2.8)
\psdots[dotstyle=*,linecolor=darkgray](12.62,-0.6)
\psdots[dotstyle=*,linecolor=darkgray](14.2,-1.28)
\psdots[dotstyle=*,linecolor=darkgray](14.46,0.3)
\psdots[dotstyle=*,linecolor=darkgray](16.04,-0.38)
\psdots[dotstyle=*,linecolor=darkgray](3.4,-1.64)
\psdots[dotstyle=*,linecolor=darkgray](4.98,-2.32)
\end{scriptsize}
\end{pspicture*}
\end{center}
 \end{figure}

\section{First order Lie calculus}\label{sec:firstLie}

\subsection{General principles}
The approach to Lie theory pursued in \cite{Be08}, strongly motivated by the theory of {\em product preserving
functors} from \cite{KMS93}, starts by the classical remark that, if $(L,m,i,1)$ is a Lie group, then so is its
tangent bundle $(TL,Tm,Ti,T1)$, with group laws the tangent maps of the group laws $m,i$ of  $L$ and unit
$T1=0_1$, the zero vector in the tangent space $T_1 L$. More generally:

\begin{lemma}\label{la:principle} 
Assume $F$ is a {\em product preserving functor}, i.e., a functor commuting with cartesian products
in the sense that always $F (A \times B) = F(A) \times F(B)$. 
Then, if $(G,m,1)$ is a group, so is $(FG,Fm,F1)$, and if
$(\K,a,m,0,1)$ is a unital ring (with addition map $a$ and multiplication map $m$), then so is
$(F\K,Fa,Fm,F0,F1)$.
\end{lemma}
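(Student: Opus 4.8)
The plan is to unwind the definitions and observe that all the axioms of a group (respectively, a unital ring) are \emph{equational}, and that product-preserving functors transport diagrammatic equations. First I would recall that a group $(G,m,1)$ can be described purely diagrammatically: the multiplication is a map $m:G\times G\to G$, the unit is a map $1:\{*\}\to G$ from the terminal object, inversion is a map $i:G\to G$, and the axioms (associativity, unit laws, inverse laws) are commutative diagrams built from $m$, $1$, $i$, the diagonal $\Delta:G\to G\times G$, the unique map $G\to\{*\}$, and the canonical associativity/unit isomorphisms for $\times$. Since $F$ is product preserving, $F(G\times G)=FG\times FG$, and moreover $F$ applied to the terminal object is terminal (a product-preserving functor preserves the empty product, i.e.\ $F(\{*\})=\{*\}$; if one does not want to assume this, one simply takes as unit the composite with $F$ of the chosen global point $1:\{*\}\to G$, which still lands in $FG$). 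Hence $Fm:FG\times FG\to FG$, $F1:\{*\}\to FG$, and $Fi:FG\to FG$ are maps of the right shape.

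Next I would check that each defining diagram is preserved. Applying $F$ to the associativity square
\[
\begin{matrix}
G\times G\times G & \longrightarrow & G\times G\\
\downarrow & & \downarrow\\
G\times G & \longrightarrow & G
\end{matrix}
\]
and using $F(G\times G\times G)=FG\times FG\times FG$ together with functoriality (which preserves commutativity of any diagram) yields the associativity square for $(FG,Fm)$. The same argument applies verbatim to the unit laws (using that $F$ sends the canonical maps $G\cong G\times\{*\}$ to the corresponding canonical maps for $FG$, which holds because $F$ is product preserving) and to the inverse laws (here one uses $F\Delta=\Delta_{FG}$ under the identification $F(G\times G)=FG\times FG$, which again is part of being product preserving). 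Therefore $(FG,Fm,F1)$, with inversion $Fi$, is a group.

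For the ring statement I would argue identically: a unital ring $(\K,a,m,0,1)$ is the data of two maps $a,m:\K\times\K\to\K$, two global points $0,1:\{*\}\to\K$, together with the equational axioms — $(\K,a,0)$ an abelian group, $(\K,m,1)$ a monoid, and the two distributivity laws, each of which is a commutative diagram built from $a$, $m$, $\Delta$, projections, and the canonical product isomorphisms. Since $F$ preserves products and composition and sends commutative diagrams to commutative diagrams, $(F\K,Fa,Fm,F0,F1)$ satisfies all the same axioms, hence is a unital ring. (Commutativity of addition is itself equational, $a\circ\tau = a$ where $\tau$ is the swap, and $F\tau$ is the swap on $F\K\times F\K$, so this passes through as well.)

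The only real subtlety — the step I expect to be the main obstacle to a fully rigorous write-up — is handling the \emph{nullary} products: one must be sure that $F$ preserves the terminal object, so that global points $1:\{*\}\to G$ have images that are again global points $\{*\}\to FG$, and likewise that $F$ respects the coherence isomorphisms $G\times\{*\}\cong G$ etc. In the setting of this paper the functors in question (like $T$, $\pG$, and later $\sett{1}$) manifestly send a one-point set to a one-point set and respect these canonical identifications on the nose, so no coherence bookkeeping is needed; but if one wants the lemma in full generality one should either add ``$F$ preserves the terminal object'' to the hypothesis or note that $F(\{*\})$ is automatically terminal whenever $F$ preserves binary products in the strict sense $F(A\times B)=FA\times FB$ used here. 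I would state this as a one-line remark and then let the diagrammatic argument above carry the rest.
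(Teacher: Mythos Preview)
Your proposal is correct and follows exactly the same approach as the paper: express the group (resp.\ ring) axioms as commutative diagrams built from structure maps, cartesian products, and diagonal imbeddings, then apply $F$ and use product-preservation to obtain diagrams of the same form. The paper's proof is a two-sentence sketch of precisely this argument (referring to \cite{Be08, KMS93} for the explicit diagrams); your version is simply a more detailed write-up, and your remark about preservation of the terminal object is a legitimate technical point that the paper leaves implicit.
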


\begin{proof}
Write the defining properties of a group, resp.\ of a ring, as commutative diagrams, involving structure maps,
cartesian products and diagonal imbeddings. Applying $F$ to such a diagram yields a diagram of the same form,
and hence a structure of the same kind. (Cf.\ \cite{Be08, KMS93} for explicit forms of such diagrams and for 
more examples of such functors, besides the tangent functor $T$.)
\end{proof}

\subsection{From groupoids to double groupoids}
The preceding lemma also applies to groupoids, taking for $F$ a functor  $\{ 1 \}_t$ which is product preserving. 
Now, the new feature is  that each functor $\sett{1}_t$ takes itself values in groupoids (and not only in sets without 
specified structure), which implies that $\sett{1}_t$, applied to a groupoid, gives us a {\em double groupoid}:

\begin{theorem}\label{th:double}
Let $L = (L_1,L_0)$ be a Lie groupoid.
Then, applying the derivation symbol $\sett{1}$, 
resp.\ $\sett{1}_t$ for fixed $t \in \K$,
we get a double groupoid
$$
\begin{matrix}
L_1^\sett{1}  &   \rightrightarrows & L_0^\sett{1}  \cr
 \downdownarrows & &  \downdownarrows 
 \cr
L_1 \times \K  &    \rightrightarrows & L_0 \times \K ,
 \end{matrix} 
\qquad \mbox{ resp. }  \qquad
\begin{matrix}
(L_1)_t   &   \rightrightarrows & (L_0)_t   \cr
 \downdownarrows  & &  \downdownarrows 
 \cr
L_1    &    \rightrightarrows & L_0 .
 \end{matrix} 
$$
\end{theorem}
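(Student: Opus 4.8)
The plan is to invoke the general principle of Lemma~\ref{la:principle} in the groupoid-valued setting, using the fact established in Theorem~\ref{th:Liegroupoid1} that $U^\sett{1}$ and $M^\sett{1}$ are themselves Lie groupoids, together with the observation that the derivation functor $\sett{1}$ (resp.\ $\sett{1}_t$) is product preserving. First I would recall why $\sett{1}_t$ is product preserving: for open sets $U\subset V$, $U'\subset V'$ one has $(U\times U')^{[1]}_t=\{(x,x',v,v')\mid x+tv\in U,\ x'+tv'\in U'\}=U^{[1]}_t\times (U')^{[1]}_t$, and the groupoid structure maps (source, target, units, product, inversion) on the left are, componentwise, exactly those on the right; the same holds for the full $\sett{1}$ if one is careful to identify the shared $\K$-factor (i.e.\ work with $U^\sett{1}\times_\K (U')^\sett{1}$ as the appropriate fibered product, since the scalar $t$ is common). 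By the chain rule $(g\circ f)^\sett{1}=g^\sett{1}\circ f^\sett{1}$ and functoriality of $\sett{1}$ on $C^1$-maps (Theorem~\ref{th:mfd}), this extends from open sets in modules to manifolds, so $\sett{1}$ is a product-preserving functor from the category of manifolds (and $C^1$-maps) to the category of Lie groupoids (and continuous functors).

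Next I would run the argument of Lemma~\ref{la:principle} with $F=\sett{1}$ (or $F=\sett{1}_t$) applied to the Lie groupoid $L=(L_1,L_0,\alpha,\beta,1,i,\ast)$. Writing out the groupoid axioms from Definition~\ref{def:groupoid} as commutative diagrams of $C^1$-maps between manifolds $L_0$, $L_1$, $L_1\times_{\alpha,\beta}L_1$ (all of which are manifolds since $L$ is a Lie groupoid, using the Hausdorff hypothesis implicitly built into ``Lie groupoid''), and applying $F$, I obtain: manifolds (hence $1$-fold groupoids via $\sett{1}$) $FL_0=L_0^\sett{1}$ and $FL_1=L_1^\sett{1}$; groupoid morphisms $F\alpha,F\beta:L_1^\sett{1}\to L_0^\sett{1}$ and $F1:L_0^\sett{1}\to L_1^\sett{1}$, $Fi$; and a product $F\ast$. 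Because $F$ preserves products, $F(L_1\times_{\alpha,\beta}L_1)$ is the corresponding fibered product $L_1^\sett{1}\times_{F\alpha,F\beta}L_1^\sett{1}$, so $F\ast$ is defined on the right set, and every groupoid identity, being the image under $F$ of a commuting diagram, still commutes. Thus $(L_1^\sett{1},L_0^\sett{1})$ is a groupoid whose structure maps are morphisms of the ``vertical'' groupoids $(L_1^\sett{1},L_1\times\K)$ and $(L_0^\sett{1},L_0\times\K)$ supplied by Theorem~\ref{th:tangentgroupoid} (resp.\ $(L_1)_t\rightrightarrows L_1$ and $(L_0)_t\rightrightarrows L_0$ for fixed $t$) — which is exactly conditions (1)--(3) in the Brown--Spencer description of a double groupoid, i.e.\ the interchange law~(\ref{eqn:interchange}) and the compatibility of the two sets of structure maps. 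Concretely, the ``horizontal'' product is $F\ast$ (the derived groupoid law of $L$) and the ``vertical'' product is the tangent-groupoid law $\ast$ of Theorem~\ref{th:tangentgroupoid} applied to $L_1$ and $L_0$.

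For the $\sett{1}_t$ variant the reasoning is identical, using that $\sett{1}_t$ is product preserving for each fixed $t\in\K$ and that $(L_1)_t,(L_0)_t$ are Lie groupoids by Theorem~\ref{th:Liegroupoid1}; one gets the double groupoid displayed on the right, with horizontal law $(F\ast)_t$ and vertical laws the groupoid $L$ itself on $L_1\rightrightarrows L_0$. I expect the only genuinely delicate point to be the bookkeeping of the \emph{shared scalar factor} in the non-fixed-$t$ case: the naive ``$F(A\times B)=FA\times FB$'' has to be replaced by a fibered product over $\K$ (the symbol $\K$ being a functor-fixed object, since $\K^\sett{1}$ carries its own structure), so one must check that the relevant diagrams still commute after applying $\sett{1}$ with this fibered interpretation — but this is exactly the content of Theorem~\ref{th:topcal} (the tangent functor commuting with products over the base ring), and once it is granted the verification is the same diagram-chase as in Lemma~\ref{la:principle}. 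A secondary routine point is checking that the two products do not literally coincide (so that ``double groupoid'' is not vacuous), but by the Remark following the interchange law this is automatic unless everything is commutative; here $L$ need not be, so the structure is genuinely a double groupoid.
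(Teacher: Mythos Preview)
Your proof is correct and follows essentially the same approach as the paper: verify that $\sett{1}_t$ is product preserving (with the same explicit computation), invoke Lemma~\ref{la:principle} to get the upper horizontal groupoid, and obtain the interchange law from the fact that $\ast^\sett{1}$ is a morphism for the vertical product $\bullet$; you also correctly anticipate the paper's Remark that for the non-fixed-$t$ functor one must use the fibered product over $\K$ rather than a plain cartesian product. One small quibble: your closing aside about checking that the two products ``do not literally coincide'' is unnecessary and slightly off --- a double groupoid in which the two laws happen to coincide is still a perfectly good (if degenerate) double groupoid, and the Eckmann--Hilton-style collapse you allude to applies to double \emph{groups} (Example~\ref{ex:doublegroup}), not to double groupoids with nontrivial object sets.
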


\begin{proof}
In both diagrams, the vertical double arrows stand for the groupoid structures  given by Theorem
\ref{th:tangentgroupoid} (let us denote by $\bullet$ its groupoid product), 
and the upper level horizontal double arrows come from applying our functor $\sett{1}$, resp.\
$\sett{1}_t$, to the structure maps of $L$ appearing in the corresponding place of the lower level horizontal arrows. 
According to Theorem \ref{th:tangentmap}, such horizontal pairs are morphisms of the vertical groupoids. 
The lower horizontal edges are groupoids since $L$ is, by assumption, a groupoid.
Let us prove that the upper horizontal edges also describe groupoids:
as explained in Lemma \ref{la:principle}, for each fixed $t \in \K$, it suffices to show that $\{ 1 \}_t$ is a
 {\em product preserving functor}: indeed,
\begin{align*}
(U \times U')_t & = \big\{ (x,x',v,v') \in (U \times U') \times (V \times V') \mid \, (x,x')+t(v,v') \in U \times U'  \big\}
\\
& = \big\{ (x,x',v,v') \in U \times U' \times V \times V'  \mid \,  x+tv \in U, x'+tv' \in U'  \big\}
\\
& \cong  \big\{ (x,v) \in U \times V  \mid \,  x+tv \in U  \big\} \times \big\{ (x',v') \in U' \times V ' \mid \,  x'+tv' \in U ' \big\}
\\
&  = U_t \times U_t'
\end{align*}
Thus, by the lemma, on the top line we have a groupoid with product $\ast^\sett{1}$, source projection $\alpha^\sett{1}$, etc.  
Moreover, for any map $f$,
the vertical projections intertwine $f^\sett{1}$ and $f\times \id_\K$, 
which means that vertical pairs of projections are groupoid morphisms.
Finally, taking $\ast$ for $f$, from $f^\sett{1} (a \bullet b) = f^\sett{1}(a) \bullet f^\sett{1}(b)$, we get 
that $\ast^\sett{1}$ is a morphism for $\bullet$, i.e., the interchange law holds. 
\end{proof}

\begin{remark}
Please note that the functor $\sett{1}_t$ is product preserving only for fixed $t$ (which is all we need to prove the
preceding theorem). The functor $\sett{1}$ is {\em not} product preserving, but satisfies the rule
$(A \times_C B)^\sett{1} = A^\sett{1} \times_{C^\sett{1}} B^\sett{1}$, which is the good one to generalize Lemma 
\ref{la:principle} to groupoids
 (cf.\ \cite{Be15}).
\end{remark}

\begin{remark}
When $t$ is invertible, Theorem  \ref{th:anchor} implies that $L_t$ is isomorphic to the double groupoid $\pG(L)$
(see Example \ref{ex:PGL}).
\end{remark}

\begin{example}
If $L$ is a Lie group, that is, $L_0 = 1$, $L_1 = L$, we get double groupoids
$$
\begin{matrix}
L^\sett{1}  &   \rightarrow &  \K   \cr
 \downdownarrows & &  {}_\id   \downarrow  \phantom{\pi} 
 \cr
L  \times \K  &  \rightarrow &  \K , 
 \end{matrix} 
\qquad \mbox{ resp. } \qquad
\begin{matrix}
L_t   &   \rightarrow &  1   \cr
 \downdownarrows & &  {}_\id \downarrow  \phantom{\pi} 
 \cr
L    &     \rightarrow &  1 . 
 \end{matrix} 
$$
Indeed,
this is a degenerate case:  $L_0 = 1$, and $1^\sett{1} = \K$ is a trivial groupoid.
\end{example}

\subsection{From $n$-fold groupoids to $(n+1)$-fold groupoids} By the same principles:

\begin{definition}
An {\em  $n$-fold Lie groupoid} is an $n$-fold groupoid $(L^A)_{A\in \cP(\sfn)}$ such that, for each
edge $(B,A)$ of the natural hypercube, the edge groupoid $(L^A , L^B)$
 carries a structure of Lie groupoid.
\end{definition}

\begin{theorem}\label{th:doubledouble}
Assume $L = (L^A)_{A\in \cP(\sfn)}$ is an $n$-fold Lie groupoid. 
Then, applying the derivation symbol $\sett{n+1}$, 
resp.\ $\sett{n+1}_t$ for fixed $t \in \K$,
we get an $(n+1)$-fold groupoid $G=(G^A)_{A \in \cP(\mathsf{n+1})}$
given by the families of vertex sets:
$$
G^A = \Big\{ \begin{matrix} (L^A)^\sett{n+1} & \mbox{ if }  A \subset \sfn, \\ 
L^B \times \K & \mbox{ if }   A = B \cup \{ n+1 \} ,
\end{matrix}
\, \mbox{ resp. } \, 
G^A = \Big\{ \begin{matrix} (L^A)_t  & \mbox{ if }  A \subset \sfn, \\ 
L^B  & \mbox{ if }   A = B \cup \{ n+1 \} .
\end{matrix}
$$
\end{theorem}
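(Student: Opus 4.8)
The plan is to mimic the proof of Theorem \ref{th:double}, which handled the case $n=1$, and simply check that the inductive bookkeeping goes through. The key structural fact is the small characterization of $n$-fold groupoids (Theorem \ref{th:smallchar}): it suffices to produce, for each edge $(B,A)$ of the hypercube $\cP(\mathsf{n+1})$, a groupoid structure on $(G^A,G^B)$, and for each face $(D,C,B,A)$ a double groupoid. So first I would sort the edges of the $(n+1)$-cube into three types: (i) \emph{old} edges, lying entirely inside the sub-cube $\cP(\sfn)$ (i.e.\ $A \subset \sfn$); (ii) \emph{new} edges, obtained by translating an old edge by $\{n+1\}$; and (iii) \emph{bridge} edges $\ol{BA}$ with $A = B \cup \{n+1\}$ and $B \subset \sfn$.

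For old edges, $G^A = (L^A)^\sett{n+1}$ and $G^B = (L^B)^\sett{n+1}$, and the groupoid structure is the one produced by applying the derivation functor $\sett{n+1}$ (resp.\ $\sett{n+1}_t$) to the edge groupoid $(L^A,L^B)$ of $L$ — this is exactly Theorem \ref{th:tangentgroupoid}, applied to the Lie groupoid $(L^A,L^B)$ whose total space is open in a $\K$-module (or, on the manifold level, to a Lie groupoid, using Theorem \ref{th:mfd}). For new edges, $G^A = L^{A'} \times \K$ and $G^B = L^{B'} \times \K$ where $A' = A \setminus \{n+1\}$, $B' = B \setminus \{n+1\}$, $B' \subset A' \subset \sfn$; the groupoid structure is just the edge groupoid $(L^{A'},L^{B'})$ of $L$, crossed with the trivial (discrete, one-object-per-point) groupoid on $\K$ — for the $\sett{n+1}_t$ version one drops the $\times \K$ entirely and uses $(L^{A'},L^{B'})$ as is. For bridge edges $\ol{BA}$ with $A = B\cup\{n+1\}$, the pair is $\bigl((L^B)^\sett{n+1}, L^B \times \K\bigr)$ (resp.\ $\bigl((L^B)_t, L^B\bigr)$), and this is precisely the tangent groupoid of $L^B$ from Theorem \ref{th:tangentgroupoid}: source/target/units/product/inverse all as in that theorem.

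Then I would check the faces. A face $(D,C,B,A)$ of $\cP(\mathsf{n+1})$ is either entirely old, entirely new, or a ``bridge face'' containing two old vertices and their two translates by $\{n+1\}$. Old faces: apply $\sett{n+1}$ (resp.\ $\sett{n+1}_t$) to the corresponding double groupoid of $L$ (which exists by Theorem \ref{th:smallchar} applied to $L$); since $\sett{1}_t$ is product preserving (shown in the proof of Theorem \ref{th:double}, via $(U\times U')_t \cong U_t \times U_t'$) and since $\sett{1}$ satisfies $(A\times_C B)^\sett{1} = A^\sett{1}\times_{C^\sett{1}} B^\sett{1}$ (the remark after Theorem \ref{th:double}), the image diagram is again a double groupoid — the interchange law is preserved because it is the assertion that a product map is a morphism, and functors preserve morphisms. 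New faces: these are just old faces of $L$ crossed with the trivial groupoid on $\K$ (or nothing, in the $t$-version), which is trivially a double groupoid. Bridge faces: the vertices are $(L^D)^\sett{n+1} \rightrightarrows (L^C)^\sett{n+1}$ over $L^D\times\K \rightrightarrows L^C\times\K$ — wait, more precisely a bridge face has the form
\[
\begin{matrix}
(L^B)^\sett{n+1} & \rightrightarrows & (L^D)^\sett{n+1} \\
\downdownarrows & & \downdownarrows \\
L^B\times\K & \rightrightarrows & L^D\times\K
\end{matrix}
\]
with $D\subset B\subset\sfn$, where the vertical arrows are tangent groupoids and the horizontal arrows come from applying $\sett{n+1}$ to the edge groupoid $(L^B,L^D)$ of $L$. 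That this is a double groupoid is exactly the content of Theorem \ref{th:double} applied to the Lie groupoid $(L^B,L^D)$ — the horizontal structure maps are $\sett{1}$-images of groupoid structure maps, hence morphisms of the vertical tangent groupoids by Theorem \ref{th:tangentmap}, and the interchange law holds because $\ast^\sett{1}$ is a morphism for $\bullet$. The main obstacle, and the only place real care is needed, is the \emph{consistency} at bridge faces: one must verify that the two ways of reading such a face — ``tangent groupoid of an edge groupoid'' vertically versus ``$\sett{1}$-image applied to the vertical tangent structure'' horizontally — produce compatible structure maps, i.e.\ that all the fiber products $G^A \times_{\alpha,\beta} G^A$ are genuinely sub-$(n$-fold$)$-groupoids and the various $\alpha,\beta,1,i$ are simultaneously morphisms in both directions. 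This is a finite, mechanical check once the edges and faces are enumerated, and it reduces entirely to Theorems \ref{th:tangentgroupoid}, \ref{th:tangentmap}, \ref{th:double} together with the product-preservation properties of $\sett{1}$ and $\sett{1}_t$; I would present it as ``contemplating the tesseract'' in the spirit of the proof of Theorem \ref{th:smallchar}, rather than writing out all $\binom{n+1}{2}2^{n-1}$ faces.
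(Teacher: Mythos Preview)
Your approach is essentially identical to the paper's: invoke the small characterization (Theorem \ref{th:smallchar}), sort the vertices, edges and faces of the $(n{+}1)$-cube into old, new, and bridge types, and reduce each case to Theorems \ref{th:tangentgroupoid} and \ref{th:double} via the product-preservation of $\sett{1}_t$. Your write-up is considerably more detailed than the paper's four-line sketch; note only that the paper's proof assigns the \emph{copy} of $L$ to the old sub-cube and the derived version to the new one --- the reverse of the labeling in the theorem statement, which you followed --- but this is a convention, not a mathematical discrepancy.
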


\begin{proof}
One uses language from the proof of Theorem  \ref{th:smallchar} and
arguments as in the proof of Theorem \ \ref{th:double}:
the ``old'' vertices and their edges form an $n$-fold groupoid, a copy of the one we started with, $L$. 
The ``new'' vertices and their edges form another $n$-fold groupoid, obtained from the old one by applying the
functor $\sett{n+1}$, resp.\ the product-preserving functor $\sett{n+1}_t$.
Each edge joining an old vertex $B$ and a new vertex $A = B \cup \{ n+1\}$ defines a groupoid of the form
given by th.\ \ref{th:tangentgroupoid}. Each face defines a double groupoid, by the arguments given in the proof of
Theorem  \ref{th:double}.
\end{proof}

\begin{definition}
The $(n+1)$-fold groupoid $G$ obtained from an $n$-fold Lie groupoid $L$ as in the theorem, will be called the
{\em derived higher groupoid} and denoted by
$L^\sett{n+1}$, resp.\ by $L_t^\sett{n+1}$.
\end{definition}

\begin{remark}[Why the order matters]
In the same way, we could ``derive'' an $n$-fold Lie groupoid
$L = (L^A)_{A\in \cP(N)}$ with $N \subset \N$, to get an $(n+1)$-fold Lie groupoid 
$G= (G^A)_{A\in \cP(N')}$, where $N' = N \cup \{ k \}$ with
$k > j$ for all $j \in N$. (Without this last condition the procedure would depend on the choice of $k$ in an essential
way, and hence would not be well-defined!)
\end{remark}

\section{Higher order calculus}\label{sec:higherorder}

Now we are ready to iterate $n$-times the two functors $\sett{1}$ and $\sett{1}_t$ (for fixed $t$) from first order calculus. 
Both iterations give us, by the general principles developed so far, $n$-fold groupoids, denoted by $M^\sfn$ (``first construction'': full cubic),
resp.\  $M^\sfn_\ttt$ for $\ttt \in \K^n$ fixed (``second construction'': symmetric cubic).
Although the general principles are the same for both constructions, it turns out that understanding the structure of the full cubic $M^\sfn$
is far more difficult than understanding the structure of the symmetric cubic $M^\sfn_\ttt$. 
In the latter case, $M^\sfn_\ttt$ can be understood as {\em scalar extension of $M$} from $\K$ to the ring
$\K^\sfn_\ttt$, whose structure is fairly transparent, and quite close to the {\em higher order tangent rings} $T^n \K$ used in
\cite{Be08}.

\subsection{Full cubic {\sl versus} symmetric cubic}\label{ssec:full-vs-symmetric}
Recall from def.\ \ref{def:CnK} the setting of topological calculus, the
 definition of the class $C^n_\K$ and of the 
higher order slopes $f^{[n]}$ defined on the domain $U^{[n]}$. 
 Note that, if $U$ is open in $V$, then $U^{[1]}$ is open in $V^2 \times \K$, whence by induction, $U^{[n]}$ is open in
$V^{2^n}\times \K^{2^n - 1}$.
More conceptually, this kind of definition gives us
 the double groupoids $U^\sett{1,2}=(U^\sett{1})^\sett{2}$, etc.\ (recall notation from Def.\
\ref{def:copies}).
The following result is purely algebraic; no topology is used:

\begin{theorem}\label{th:Usfn}
Assume $U$ is a non-empty subset of the $\K$-module $V$.
\begin{enumerate}
\item By induction, the following defines $n$-fold groupoids:
\begin{align*}
U^\sfn & = U^\sett{1,\ldots,n}:= ((U^\sett{1})^\sett{2}\ldots )^\sett{n} , \\
U^\sfn_\fin & = U^\sett{1,\ldots,n}_\fin:= ((U^\sett{1}_\fin)^\sett{2}_\fin\ldots )^\sett{n}_\fin , 
\end{align*}
\item
for each $\ttt = (t_1,\ldots,t_n) \in \K^n$, the following defines an $n$-fold groupoid:
$$
U^\sfn_\ttt := (U^\sett{1}_{t_1})^\sett{2}_{t_2} \ldots )_{t_n}^\sett{n} .
$$
\end{enumerate}
The top vertex set of $U^\sfn$  agrees with the $n$-th order extended domain $U^{[n]}$:
\begin{equation*}
U^{\sfn;\sfn} = U^{[n]} .
\end{equation*}
Every map $f:U \to U'$ induces morphisms of $n$-fold groupoids
\begin{align*}
f^\sfn_\fin & := ((f^\sett{1}_\fin)^\sett{2}_\fin\ldots )^\sett{n}_\fin : 
U^\sfn_\fin \to (U') ^\sfn_\fin  , \\
f_\ttt^\sfn & := ((f_{t_1})_{t_2} \ldots )_{t_n} : U_\ttt^\sfn \to (U')_\ttt^\sfn  ,
\end{align*}
the latter under the condition that $\forall i = 1,\ldots,n$: $t_i \in \K^\times$.
\end{theorem}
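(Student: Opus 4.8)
The plan is to prove everything by induction on $n$, treating the three constructions ($U^\sfn$, $U^\sfn_\fin$, $U^\sfn_\ttt$) in parallel since the inductive steps are formally identical. The base case $n=1$ is exactly Theorem \ref{th:tangentgroupoid} (together with Definition \ref{def:tangentgroupoid} for the finite part and $U^\sett{1}_t$), and the statement about tangent maps for $n=1$ is Theorem \ref{th:tangentmap}. So the real content is the induction step.

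For the induction step, suppose $L := U^{\mathsf{n}}$ (resp.\ its finite or $\ttt$-version) is already known to be an $n$-fold groupoid. Observe that each edge groupoid of $L$ is of the form $U'^\sett{i}$ (or $U'^\sett{i}_t$) for some subset $U'$ of a $\K$-module built out of $V$ — indeed every vertex set is an (open, in the topological case) subset of a suitable finite power $V^{2^k}\times\K^{2^k-1}$, and every edge groupoid has precisely the shape of Theorem \ref{th:tangentgroupoid}. In particular each edge groupoid is a Lie groupoid by Theorem \ref{th:Liegroupoid1}, so $L$ is an $n$-fold Lie groupoid in the sense of the definition preceding Theorem \ref{th:doubledouble}. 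Now apply the derivation symbol $\sett{n+1}$ (resp.\ $\sett{n+1}_t$, for $t\in\K^\times$ in the $\ttt$-case, and $\sett{n+1}_\fin$ for the finite part): Theorem \ref{th:doubledouble} hands us directly an $(n+1)$-fold groupoid whose vertex sets are exactly $(L^A)^\sett{n+1}$ for old vertices and $L^B\times\K$ for new ones — which, unwinding the notation, is precisely $U^{\mathsf{n+1}}=((U^\sett{1})^\sett{2}\cdots)^\sett{n+1}$. This closes the induction for parts (1) and (2). For the identity $U^{\mathsf{n};\mathsf{n}}=U^{[n]}$, note that the top vertex $A=\mathsf{n}$ is always a ``new'' vertex relative to the last derivation, so $U^{\mathsf{n};\mathsf{n}} = (U^{\mathsf{n-1};\mathsf{n-1}})^{[1]}$; by the inductive hypothesis this is $(U^{[n-1]})^{[1]} = U^{[n]}$ by Definition \ref{def:CnK}.

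For the functoriality claims, I would again induct. The base case is Theorem \ref{th:tangentmap} (finite part) and the remark that $f^\sett{1}_t$ is a functor for invertible $t$. For the step: given $f:U\to U'$, the inductively-constructed morphism $f^{\mathsf{n}}_\fin : U^{\mathsf{n}}_\fin \to (U')^{\mathsf{n}}_\fin$ is a morphism of $n$-fold Lie groupoids; applying the product-preserving functor $\sett{n+1}_\fin$ (it is product preserving in the sense of the proof of Theorem \ref{th:double}, at least fibrewise over $\K$) and invoking the mechanism of the proof of Theorem \ref{th:doubledouble} — namely that $\sett{n+1}$ carries groupoid morphisms to morphisms of the derived $(n+1)$-fold groupoids, with the interchange laws coming from $f^\sett{n+1}(a\bullet b)=f^\sett{n+1}(a)\bullet f^\sett{n+1}(b)$ applied to the structure maps of $L$ — yields that $f^{\mathsf{n+1}}_\fin$ is a morphism of $(n+1)$-fold groupoids. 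The same argument works verbatim for $f^\sfn_\ttt$ once one imposes $t_i\in\K^\times$ for all $i$, because then each $\sett{i}_{t_i}$ is a genuine automorphism-admitting, product-preserving functor and the anchor isomorphism of Theorem \ref{th:anchor} is available at each stage.

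The main obstacle, such as it is, is purely bookkeeping: one must be careful that after several iterations the ambient objects are still ``open subsets of $\K$-modules'' of the expected shape, so that Theorems \ref{th:tangentgroupoid}, \ref{th:tangentmap}, \ref{th:Liegroupoid1} and \ref{th:doubledouble} genuinely apply at each edge and each face of the growing hypercube — this is where one uses that $U^{[1]}$ is (open) in $V^2\times\K$, hence by induction $U^{[n]}$ is (open) in $V^{2^n}\times\K^{2^n-1}$, and that every vertex set of $U^\sfn$ arises the same way. No single step is genuinely hard; the only subtlety worth flagging explicitly is the order-dependence (the Remark ``Why the order matters'' after Theorem \ref{th:doubledouble}), which forces us to always derive with a fresh symbol $\sett{n+1}$ larger than all previously used indices — this is exactly why the nested notation $((U^\sett{1})^\sett{2}\cdots)^\sett{n}$ is unambiguous and the induction goes through cleanly. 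Full computational details are in \cite{Be15, Be15b}.
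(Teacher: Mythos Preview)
Your proposal is correct and follows essentially the same approach as the paper's proof, which is a two-line sketch: induct on $n$ using the arguments of Theorems \ref{th:double} and \ref{th:doubledouble}, and track the top vertex set via $U^{\sfn;\sfn}=(U^{[n-1]})^{[1]}=U^{[n]}$. You have supplied the details the paper leaves implicit (the base case, the Lie groupoid structure on each edge so that Theorem \ref{th:doubledouble} applies, the parallel treatment of the functoriality claims, and the bookkeeping remark about ambient $\K$-modules), but the skeleton is identical.
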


\begin{proof}
Proceeding by induction, one uses exactly the same arguments as in the proof of theorems
\ref{th:double} and \ref{th:doubledouble}. 
To describe the top vertex set by induction, note that
$U^\sett{1}=(U^{[1]},U \times \K)$ has $U^{[1]}$ as top vertex set, so
$U^\sett{2}$ has $(U^{[1]})^{[1]}=U^{[2]}$ as top vertex set, and so on.
(Recall that the explicit formulae for these things may be quite complicated: {\it cf.}  eqn.\  (\ref{eqn:4.1}).)
\end{proof}

\begin{theorem}[Full cubic $C^n$]\label{th:tangent-n}
Let $\K$ be a good topological ring, $V,W$ topological $\K$-modules, $U \subset V$ open and
$f:U \to W$ a map. Then the following are equivalent:
\begin{enumerate}
\item
$f$ is of class $C^n_\K$,
\item
the morphism $f^\sfn_\fin$ extends to a continuous morphism
$f^\sfn: U^\sfn \to W^\sfn$.
\end{enumerate}
For every Hausdorff manifold $M$ of class $C^n_\K$, there is an $n$-fold groupoid $M^\sfn$
 such that,
when $M=U$ is open in $V$, $M^\sfn$ is the $n$-fold groupoid from Theorem \ref{th:Usfn}.
\end{theorem}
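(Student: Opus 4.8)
The plan is to reduce Theorem \ref{th:tangent-n} to the iteration of the first-order result, Theorem \ref{th:topcal}, combined with the density principle (Lemma \ref{la:density}) and the $n$-fold groupoid machinery established in Theorems \ref{th:Usfn}, \ref{th:double} and \ref{th:doubledouble}. First I would observe that the equivalence of (1) and (2) is essentially a tautological unwinding of the inductive definitions once one knows the single-step version. Indeed, by Definition \ref{def:CnK}, $f$ is of class $C^n_\K$ if and only if $f$ is $C^1_\K$ and $f^{[1]}:U^{[1]}\to W$ is of class $C^{n-1}_\K$; and by Theorem \ref{th:topcal}, $f$ being $C^1_\K$ is equivalent to the finite part $f^\sett{1}_\fin$ extending to a continuous functor $f^\sett{1}:U^\sett{1}\to W^\sett{1}$. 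Since the top vertex set of $U^\sett{1}$ is $U^{[1]}$ and $f^\sett{1}$ restricts on that vertex to $f^{[1]}$ (together with the identity in the $\K$-slot), the class of $f^{[1]}$ is governed by the same criterion one dimension down. So the proof is an induction on $n$: the base case $n=1$ is exactly Theorem \ref{th:topcal}, and in the inductive step one applies the hypothesis to the map $f^{[1]}:U^{[1]}\to W$ on the open set $U^{[1]}\subset V^2\times\K$.

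The key steps, in order, are as follows. First, set up the induction and record that $U^\sfn = (U^\sett{1,\ldots,n-1})^\sett{n}$ and that the same recursive identity holds for $f^\sfn$ and for the finite parts; this is already contained in Theorem \ref{th:Usfn}. Second, assuming (1): $f\in C^n_\K$ means $f\in C^1_\K$ and $f^{[1]}\in C^{n-1}_\K$; apply Theorem \ref{th:topcal} to get the continuous functor $f^\sett{1}$, and apply the inductive hypothesis to $f^{[1]}$ to get that $(f^{[1]})^{\sett{1,\ldots,n-1}}_\fin$ extends to a continuous morphism of $(n-1)$-fold groupoids on $(U^{[1]})^{\sett{1,\ldots,n-1}}$. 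Relabelling the generations (shifting $\sett{k}$ to $\sett{k+1}$) and assembling with the outermost $\sett{1}$ via the argument of Theorem \ref{th:double}, this is precisely a continuous morphism $f^\sfn:U^\sfn\to W^\sfn$ of $n$-fold groupoids extending $f^\sfn_\fin$. Third, for the converse, assuming (2), restrict $f^\sfn$ to the appropriate vertex sets: the top vertex gives a continuous extension of $f^{[n]}$, and more importantly, restricting to a sub-hypercube exhibits $(f^{[1]})^{\sett{1,\ldots,n-1}}$ as continuously extended, so by the inductive hypothesis $f^{[1]}\in C^{n-1}_\K$; combined with $f\in C^1_\K$ (from the $n=1$ sub-structure), Definition \ref{def:CnK} gives $f\in C^n_\K$. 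Fourth, the manifold statement: one glues the local $n$-fold groupoids $U_i^\sfn$ coming from chart domains $U_i$ using the chain rule $(g\circ f)^\sfn = g^\sfn\circ f^\sfn$ — transition maps are $C^n$, hence induce $n$-fold groupoid isomorphisms on overlaps with cocycle condition inherited from functoriality — exactly as in the $n=1$ case treated in Theorem \ref{th:mfd}, invoking the Hausdorff hypothesis to define the products globally (not merely locally).

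I expect the main obstacle to be bookkeeping rather than conceptual depth: one must be careful that the ``density principle'' (Lemma \ref{la:density}) is legitimately applicable at each generation. Concretely, after the continuous extension exists as a map, one needs to know it is still a morphism of $n$-fold groupoids, i.e. that all the interchange laws and functoriality identities survive the limit $t_i\to 0$ in each of the $n$ scalar variables. The clean way is to note that on the finite part (all $t_i$ invertible) every structure map is given by an explicit rational expression in the $v$'s and the $t_i$'s which is continuous on the finite part, that the identities hold there by Theorem \ref{th:Usfn}, and that $U^\sfn_\fin$ is dense in $U^\sfn$ (each $\K^\times$ being dense in $\K$, as $\K$ is a good topological ring, and finite intersections/products of dense sets behaving well enough here because the defining conditions are open); hence every identity valid on the finite part extends by continuity. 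The only genuinely delicate point in the manifold part is the global definition of the groupoid products on $M^\sfn$ when charts are disconnected, which is handled exactly as in Lemma D.3 of \cite{Be15} using the Hausdorff property, and I would simply cite that argument rather than reproduce it. Everything else is a routine, if lengthy, induction that faithfully mirrors the first-order proofs already given.
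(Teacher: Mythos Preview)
Your proposal is correct and follows essentially the same approach as the paper: the paper's own proof is a single sentence stating that the equivalence of (1) and (2) follows by induction from Theorem~\ref{th:topcal} and that existence of $M^\sfn$ follows by the same principles from Theorem~\ref{th:Usfn}. You have simply spelled out in detail what that induction and that gluing argument look like, including the density-principle justification that the extended map remains a morphism; nothing you wrote departs from or adds to the paper's strategy.
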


\begin{proof}
Equivalence of (1) and (2) follows by induction from Theorem \ref{th:topcal}, and existence of $M^\sfn$ follows, by
the same principles, from Theorem  \ref{th:Usfn}.
\end{proof}

\begin{definition}\label{def:n-fold}
For any smooth Hausdorff manifold over $\K$, we call the $n$-fold groupoid
$M^\sfn = (M^{A;\sfn})_{A \in \cP(\sfn)}$ the {\em $n$-fold tangent groupoid of $M$}, or the
{\em $n$-fold magnification of $M$}. Note that each vertex set $M^{A;\sfn}$ is again a smooth manifold.
\end{definition}

\begin{theorem}[Symmetric cubic $C^n$]
Retain assumptions from the preceding theorem, and fix $\ttt \in \K^n$. 
Then for every Hausdorff manifold of class $C^n$ 
 there is an $n$-fold groupoid $M^\sfn_\ttt$ over $M$  such that,
\begin{itemize}
\item
when $M=U$ is open in $V$, $M^\sfn_\ttt$ is the $n$-fold groupoid from Theorem \ref{th:Usfn},
\item
when $\ttt = (0,\ldots,0)$, $M^\sfn_\ttt$ agrees with the $n$-fold tangent bundle $T^n M$, 
\item when
$\ttt \in (\K^\times)^n$, then $M^\sfn_\ttt$ is isomorphic to the $n$-fold pair groupoid $\pG^\sfn(M)$.
\end{itemize}
Every  $C^n$-map $f:M\to N$ induces a morphism of $n$-fold groupoids $f^\sfn_\ttt :M_\ttt^\sfn \to N_\ttt^\sfn$.
\end{theorem}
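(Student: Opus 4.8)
The plan is to derive the Symmetric Cubic $C^n$ theorem directly from the already-established machinery, treating it as the ``fixed-$\ttt$'' companion to Theorem \ref{th:tangent-n}. First I would observe that for open $U \subset V$ the $n$-fold groupoid $U^\sfn_\ttt$ is already furnished by Theorem \ref{th:Usfn}(2) as a purely algebraic object, and the induced morphism $f^\sfn_\ttt : U^\sfn_\ttt \to (U')^\sfn_\ttt$ (for $\ttt \in (\K^\times)^n$) likewise comes from that theorem. The only genuinely new content for open sets is: (a) extending $f^\sfn_\ttt$ from invertible $\ttt$ to arbitrary $\ttt \in \K^n$ when $f$ is $C^n$, and (b) identifying the two boundary cases $\ttt = (0,\ldots,0)$ and $\ttt \in (\K^\times)^n$. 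For (a) I would argue by induction on $n$: by Theorem \ref{th:topcal}, $C^1$-ness of $f$ gives the continuous extension $f^\sett{1}_{t}$ for every $t \in \K$ (not just invertible $t$), commuting with the $\K^\times$-action; applying the inductive hypothesis to the $C^{n-1}$-map $f^{[1]}$ on $U^{[1]}$ and reassembling via the ``second construction'' $(U^\sett{1}_{t_1})^\sett{2}_{t_2}\cdots)^\sett{n}_{t_n}$ yields the extended morphism, and the fact that it is a morphism of $n$-fold groupoids follows, as in the proofs of Theorems \ref{th:double} and \ref{th:doubledouble}, from the Density Lemma \ref{la:density} together with product-preservation of each $\sett{k}_{t_k}$.

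Next I would handle the two special values of $\ttt$. For $\ttt \in (\K^\times)^n$, each factor $\sett{k}_{t_k}$ is, by Theorem \ref{th:anchor} (applied one variable at a time), anchor-isomorphic to a pair-groupoid construction; iterating this isomorphism through the $n$-fold tower gives $U^\sfn_\ttt \cong \pG^\sfn(U)$, the $n$-fold pair groupoid of Example \ref{ex:PGL}, and under these isomorphisms $f^\sfn_\ttt$ corresponds to the iterated map $f \times f \times \cdots \times f$ exactly as in the proof of Theorem \ref{th:tangentmap}. For $\ttt = (0,\ldots,0)$, Theorem \ref{th:anchor} identifies each $U^\sett{k}_0$ with a tangent bundle, so by induction $U^\sfn_{(0,\ldots,0)}$ is the $n$-fold iterated tangent bundle $T^n U$ (with its standard $n$-fold vector-bundle, hence $n$-fold groupoid, structure), and $f^\sfn_{(0,\ldots,0)} = T^n f$; this is precisely the ``second construction'' contracted all the way down.

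Finally I would pass from open sets to Hausdorff manifolds. Here the argument is the same as for Theorem \ref{th:mfd} and the manifold part of Theorem \ref{th:tangent-n}: a $C^n$-atlas on $M$ has chart domains $U_i$, on each of which $U_i{}^\sfn_\ttt$ is defined; the chain rule $(g \circ f)^\sfn_\ttt = g^\sfn_\ttt \circ f^\sfn_\ttt$ (the fixed-$\ttt$ specialization of the chain rule used in Theorem \ref{th:tangent-n}) guarantees that the transition maps glue the local pieces into a global $n$-fold groupoid $M^\sfn_\ttt$; the Hausdorff hypothesis is invoked exactly where it is in Section \ref{sec:manifolds}, to define the groupoid products globally rather than only locally. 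Naturality of all constructions then makes $f^\sfn_\ttt : M^\sfn_\ttt \to N^\sfn_\ttt$ a morphism of $n$-fold groupoids, and the three bulleted compatibilities descend from the open-set case chart by chart.

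The main obstacle I expect is step (a) — checking that, after passing to non-invertible $\ttt$, the extended map $f^\sfn_\ttt$ really satisfies \emph{all} the $n$-fold groupoid morphism axioms (the compatibilities with every edge projection, every unit section, and every interchange law across the $2^n$-vertex hypercube), uniformly in $\ttt$. The Density Lemma reduces each individual identity to its already-known invertible-$\ttt$ instance, but organizing this over the full hypercube without drowning in indices is the delicate point; the cleanest route is to phrase it inductively so that at stage $n$ one only adds the single new direction $\sett{n}_{t_n}$ and quotes product-preservation of that one functor, exactly mirroring the induction in Theorems \ref{th:double}, \ref{th:doubledouble}, and \ref{th:Usfn}.
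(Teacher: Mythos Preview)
Your proposal is correct and follows essentially the same approach as the paper. The paper's own proof is a single line (``As above, by induction, using Theorem \ref{th:anchor}''), and what you have written is precisely the detailed unpacking of that line: induction along the $n$-fold tower as in Theorems \ref{th:double}, \ref{th:doubledouble}, \ref{th:Usfn}, the anchor theorem for the two extremal values of $\ttt$, and the gluing argument from Section \ref{sec:manifolds} for the passage to manifolds.
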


\begin{proof}
As above,
by induction, using Theorem  \ref{th:anchor}.
\end{proof}

\nin
A major difference between full cubic and symmetric cubic is that, in the latter case, we have the following
result (which fails in the full cubic case!) 

\begin{theorem}[The generalized Schwarz Theorem]\label{th:Schwarz}
For every permutation $\sigma \in {\mathfrak S}_n$, there is a natural isomorphism of
$n$-fold groupoids
$$
U_{(t_1,\ldots,t_n)}^\sfn \to U_{(t_{\sigma(1)},\ldots,t_{\sigma(n)} )}^\sfn ,
$$
inducing, for every Hausdorff $\K$-manifold $M$, a natural isomorphism 
$$
\tilde \sigma: M_{(t_1,\ldots,t_n)}^\sfn \to M _{(t_{\sigma(1)},\ldots,t_{\sigma(n)} ) }^\sfn \,  .
$$
In particular, when $\ttt = (t,\ldots,t)$ with $t\in \K$, the symmetric group ${\mathfrak S}_n$ acts
by automorphisms on $M^\sfn_\ttt$
(by definition, this means that $M^\sfn_\ttt$ is {\em edge-symmetric}).
For $t=0$, this action induces the natural action of ${\mathfrak S}_n$ on $T^n M$, as considered in \cite{Be08}, and corresponding to
the classical {\em Schwarz's theorem}.
\end{theorem}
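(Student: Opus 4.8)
The plan is to reduce everything to the case of an adjacent transposition $\sigma = (i\ \ i{+}1)$, since these generate $\mathfrak{S}_n$ and a composite of natural isomorphisms is again a natural isomorphism of $n$-fold groupoids. For such a transposition, only the $i$-th and $(i{+}1)$-th ``derivation slots'' are involved, so by the inductive construction of $U^\sfn_\ttt$ as $(\ldots)^\sett{n}_{t_n}$ it suffices to treat $n=2$: I want a natural isomorphism $\Sigma: U^{\sett{1,2}}_{(t_1,t_2)} \to U^{\sett{2,1}}_{(t_2,t_1)}$ of double groupoids, and then apply the derivation functors $\sett{i-1}_{t_{i-1}}, \ldots$ on the ``inner'' side and $\sett{i+2}_{t_{i+2}}, \ldots$ on the ``outer'' side to propagate it to general $n$; functoriality of $\sett{k}_{t_k}$ guarantees the propagated map is again a morphism of $(n)$-fold groupoids, and naturality in $M$ is inherited because each $\sett{k}_{t_k}$ is a functor.

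For the core $n=2$ case, I would write down $\Sigma$ explicitly on the top vertex set $U^{[2]}$ and check it commutes with all four edge structure maps. Concretely, $U^{\sett{1,2}}_{(t_1,t_2)}$ has top vertex (using the notation of Eqn.~(\ref{eqn:4.1})) given by tuples $(v_0,v_1,v_2,v_{12})$ subject to the four membership conditions of the $U^{[2]}$ domain; the candidate map should send this to the tuple with the roles of the two ``first-order'' directions swapped, i.e.\ roughly $(v_0, v_2, v_1, v_{12})$, together with the bookkeeping of the mixed coefficient that makes the fourth domain condition symmetric in $(t_1,v_1)\leftrightarrow(t_2,v_2)$. (Getting the precise formula for the $v_{12}$-component right — so that the condition $v_0+t_2v_2+(t_1+t_2t_{12})(v_1+t_2v_{12})\in U$ maps exactly to its $1\leftrightarrow 2$ swap — is the delicate point.) On each lower vertex set and each edge, $\Sigma$ restricts to the analogous (simpler) swap, and one checks directly that source, target, units, inversion and both products $\ast,\bullet$ are intertwined, and that $\Sigma$ is bijective with inverse the corresponding reverse swap. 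Since $\Sigma$ is defined by a polynomial formula in the $v$'s and $t$'s, it is automatically compatible with the extension from finite parts to all scalars, and hence descends to manifolds via the chain rule / atlas argument used throughout the paper, giving $\tilde\sigma$.

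The remaining assertions are then corollaries. When $\ttt=(t,\ldots,t)$ all the target tuples $(t_{\sigma(1)},\ldots,t_{\sigma(n)})$ coincide with $\ttt$ itself, so the family $\{\tilde\sigma\}_{\sigma\in\mathfrak S_n}$ consists of automorphisms of the single object $M^\sfn_\ttt$; that it is a genuine group action (not merely a family of isomorphisms) follows because reducing a product $\sigma\tau$ to adjacent transpositions and composing the corresponding $\Sigma$'s is compatible with the same reduction of $\sigma$ and of $\tau$ separately — this is exactly the braid-type coherence one verifies once at $n=3$ (the relation $\Sigma_i\Sigma_{i+1}\Sigma_i = \Sigma_{i+1}\Sigma_i\Sigma_{i+1}$ and the disjoint-commutation $\Sigma_i\Sigma_j=\Sigma_j\Sigma_i$ for $|i-j|\ge 2$), and then propagates. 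Finally, setting $t=0$ collapses $U^\sfn_\ttt$ to $T^nM$ (by Theorem~\ref{th:anchor}, the $t=0$ fibre is the $n$-fold tangent bundle), and the induced $\mathfrak S_n$-action is, by inspection of the formulas, the usual permutation of tangent-vector slots — which is the classical content of Schwarz's theorem, as in \cite{Be08}.

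\textbf{Main obstacle.} The one genuinely non-routine step is pinning down the formula for $\Sigma$ on the full domain $U^{[2]}$ — in particular the transformation law of the ``mixed'' coordinate $v_{12}$ and the mixed scalar $t_{12}$ — so that the fourth (asymmetric-looking) membership condition of $U^{[2]}$ is carried exactly onto its $1\leftrightarrow 2$ interchange; once that formula is correct, verifying that it is a double-groupoid isomorphism and then bootstrapping to general $n$ and to manifolds is mechanical given the machinery already established (Theorems~\ref{th:Usfn}, \ref{th:double}, \ref{th:doubledouble}, \ref{th:smallchar}).
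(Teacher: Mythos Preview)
Your overall strategy --- reduce to adjacent transpositions, handle $n=2$ explicitly, then propagate by functoriality of the $\sett{k}_{t_k}$'s --- is exactly the paper's approach. The problem is that you have misidentified the domain you are working on, and this misidentification is precisely what manufactures your ``main obstacle''.

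The object $U^\sfn_\ttt$ is built by iterating the functors $\sett{k}_{t_k}$ for \emph{fixed} scalars $t_k$, not the full functors $\sett{k}$. Concretely, $U^\sett{1}_{t_1} = \{(x,v)\in V^2 \mid x\in U,\ x+t_1 v\in U\}$ sits in $V^2$, not in $V^2\times\K$; applying $\sett{2}_{t_2}$ to it lands in $V^4$, and no parameter $t_{12}$ ever appears. The asymmetric membership condition you quote, $v_0+t_2v_2+(t_1+t_2t_{12})(v_1+t_2v_{12})\in U$, belongs to the \emph{full} double extension $U^{[2]}=(U^{[1]})^{[1]}$ --- and the paper remarks immediately after this proof that the full $U^\sftwo$ is \emph{not} edge-symmetric, so looking for a swap isomorphism there would actually fail.

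The paper's point is that the symmetric iteration is obtained from (\ref{eqn:4.1}) by setting $t_{12}=0$. The top vertex set then reads
\[
U^\sftwo_{(t_1,t_2)} = \Bigl\{(v_0,v_1,v_2,v_{12})\in V^4 \ \Big|\
\begin{array}{l} v_0\in U,\quad v_0+t_1v_1\in U,\quad v_0+t_2v_2\in U,\\ v_0+t_1v_1+t_2v_2+t_1t_2\, v_{12}\in U\end{array}\Bigr\},
\]
and the second slope becomes
\[
f^{[2]}(\vvv,t_1,t_2)=\frac{f(v_0+t_1v_1+t_2v_2+t_1t_2 v_{12})-f(v_0+t_1v_1)-f(v_0+t_2v_2)+f(v_0)}{t_1 t_2},
\]
both visibly symmetric under $(v_1,t_1)\leftrightarrow(v_2,t_2)$. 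So the flip is simply $\Sigma(v_0,v_1,v_2,v_{12})=(v_0,v_2,v_1,v_{12})$ with \emph{no} correction on $v_{12}$; your ``delicate point'' dissolves. From here everything proceeds as you outlined: density (Lemma~\ref{la:density}) carries the identity to all scalars, the chain rule pushes it to manifolds, and straightforward induction gives general $n$. The coherence check you mention for the $\mathfrak S_n$-action is then also immediate, since on the explicit model the $\Sigma_i$ are literal coordinate permutations.
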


\begin{proof}
For $n=2$, the symmetric iteration procedure is related to the ``full'' iteration procedure by letting $t_{12}=0$ in
equation  (\ref{eqn:4.1}). Thus we get
\begin{align*}
U^{[2]}_{(t_1,t_2)} &= \Bigl\{ 
(v_0,v_1,v_2,v_{12})\in V^4 \, \mid \,  \begin{matrix} v_0 \in U, \,\,  v_0 + t_1 v_1 \in U , \, \,v_0 + t_2 v_2 \in U, \\
v_0 + t_1 v_1 + t_2 v_2 + t_1 t_2 v_{12} \in U \end{matrix} \Bigr\} ,
\\
f^{[2]}(\vvv,t_1,t_2) & =\frac{ f(v_0 + t_1 v_1 + t_2 v_2 + t_1 t_2 v_{12} ) - f(v_0 + t_1 v_1) 
-f(v_0 +t_2 v_2) + f(v_0)}{t_1 t_2} .
\end{align*}
(In the latter formula, we assume that $t_1$ and $t_2$ are invertible scalars;
see \cite{Be15b} for a similar formula for $f^{[n]}(\vvv,\ttt)$ with general $n\in \N$).
From these formulae, it is immediately read off that the {\em flip} induced by the transposition $(12)$
is an automorphism from 
$U^\sftwo_{(t_1,t_2)}$ onto $U^\sftwo_{(t_2,t_1)}$ commuting with $f^\sftwo$.
By the ``density principe'' \ref{la:density}, this still holds for all $t_1,t_2 \in \K$, and by the chain rule, it
carryies over to the manifold level.
For general $n$, the claim now follows by straightforward induction.
Finally, note that the above proof is nothing but the proof of Schwarz's Theorem from \cite{BGN04}, in disguise.
\end{proof} 

\nin
Comparing with the ``full'' formula (\ref{eqn:4.1}), one sees that the full double groupoid
$U^\sett{2}$ is {\em not} edge symmetric, and that its explicit description may become quite messy. 
In the sequel, we will have a closer look at symmetric cubic calculus.

\subsection{The scalar extension viewpoint.}
 For understanding the structure of symmetric cubic calculus, it is extremely useful to view $M^\sfn_\ttt$
as the {\em scalar extension of $M$ from $\K$ to $\K_\ttt$}.
Again, the starting point is Lemma \ref{la:principle}:

\begin{lemma}\label{la:firstderivedring}
Applying the functor $\{1\}_t$ to the ring $(\K,+,\cdot,0,1)$, we get a commutative unital ring
$\K^\sfone_t$, together with two ring morphisms onto $\K$.
 This ring is isomorphic to the truncated polynomial ring $\K[X]/(X^2 - tX)$ with its two natural projections
 onto $\K = \K[X]/(X)$ and $\K=\K[X]/(X-t)$. 
\end{lemma}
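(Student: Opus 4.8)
The plan is to spell out what ``applying the functor $\sett{1}_t$ to the ring $\K$'' means, and then reduce the whole statement to a one-line polynomial identity. First I would recall that, for fixed $t$, the functor $\sett{1}_t$ is product-preserving --- this is exactly what was checked in the proof of Theorem~\ref{th:double} --- so Lemma~\ref{la:principle} applies: writing $\K = (\K, a, m, 0, 1)$ with addition map $a$ and multiplication map $m$, the tuple $\K^\sfone_t := (\K_t, a_t, m_t, 0_t, 1_t)$ obtained by applying $\sett{1}_t$ to these structure maps is again a commutative unital ring, since every ring axiom is a commutative diagram built from $a, m, 0, 1$, cartesian products and diagonals, and such diagrams are preserved. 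Since here $U = V = \K$, the underlying set is simply $\K_t = \{(x,v) \in \K\times\K \mid x + tv \in \K\} = \K \times \K$.

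Next I would compute the transported structure maps explicitly. Identifying $(\K\times\K)_t \cong \K_t \times \K_t$ via $((x,y),(v,w)) \mapsto ((x,v),(y,w))$ (product-preservation, as in the proof of Theorem~\ref{th:double}) and using $f^{[1]}(x,v,t) = (f(x+tv)-f(x))/t$, one gets $a^{[1]} = v+w$ for $a(x,y)=x+y$ and $m^{[1]} = xw+yv+tvw$ for $m(x,y)=xy$, while $0_t = (0,0)$ and $1_t = (1,0)$. Hence addition on $\K_t$ is componentwise and
\begin{equation*}
(x,v)\cdot(y,w) = (xy,\ xw+yv+tvw), \qquad 1_{\K_t} = (1,0).
\end{equation*}
No topology or invertibility of $t$ intervenes here: the maps $a,m,0,1$ are polynomial, so their difference quotients are everywhere defined.

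Finally I would exhibit the isomorphism. The modulus $X^2 - tX$ being monic of degree $2$, every class in $\K[X]/(X^2-tX)$ has a unique representative $a + bX$ with $\deg \le 1$, and $\varphi : a + bX \mapsto (a,b)$ is an additive bijection with $\varphi(1) = (1,0)$; it is multiplicative because, using $X^2 = tX$, one has $(a+bX)(c+dX) = ac + (ad+bc+tbd)X$, which matches the formula for $m_t$ just found. So $\varphi$ is a ring isomorphism. As for the two projections: the source $\alpha\colon (x,v)\mapsto x$ and target $\beta\colon (x,v)\mapsto x+tv$ of the groupoid $\K^\sett{1}_t$ are ring morphisms onto $\K$ --- either by the immediate direct check, or conceptually because $\alpha$ and $\beta$ are (product-compatible) natural transformations from $\sett{1}_t$ to the identity functor, hence carry the ring-structure maps of $\K$ to those of $\K$ --- and under $\varphi$ they correspond precisely to the canonical quotient maps $\K[X]/(X^2-tX) \twoheadrightarrow \K[X]/(X) = \K$ (i.e.\ $X\mapsto 0$) and $\K[X]/(X^2-tX) \twoheadrightarrow \K[X]/(X-t)\cong\K$ (i.e.\ $X\mapsto t$), associated to the factorization $X^2 - tX = X(X-t)$. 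There is no genuinely hard step; the only point deserving care is the bookkeeping in the identification $(\K\times\K)_t \cong \K_t\times\K_t$, so that $a_t,m_t$ are correctly read as binary operations on $\K_t$, together with the observation that one must fix $t$, since the full functor $\sett{1}$ is not product-preserving.
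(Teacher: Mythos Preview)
Your proof is correct and follows essentially the same route as the paper: invoke product-preservation of $\sett{1}_t$ and Lemma~\ref{la:principle} to get a ring, compute $a_t$ and $m_t$ explicitly, and read off the isomorphism with $\K[X]/(X^2-tX)$ together with the two projections $\alpha,\beta$. Your version is slightly more explicit (writing out the isomorphism $\varphi$ and the correspondence with the quotient maps $X\mapsto 0$, $X\mapsto t$), but the argument is the same.
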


\begin{proof}
The first statement follows from Lemma \ref{la:principle}.
To get the ``model'', denote
 multiplication by $m:\K \times \K \to \K$, $(x,y) \mapsto xy$, and let's compute $m^\sett{1}$ explicitly:
\begin{equation}
m^{[1]}((x,y),(u,v),t) = \frac{ (x+tu)(y+tv) - xy}{t} = uy + xv + tuv ,
\end{equation}
and for the addition map: $a^{[1]}((x,y),(u,v),t) = \frac{ (x+tu) + (y+tv) - (x+y)}{t} = u + v $, whence
$\K_t = \K^2$ with multiplication and addition given by 
$(x,u) \cdot (y,v) = (xy,xv + uy + tuv)$ and
$(x,u)+(y,v) = (x+y,u+v)$.
Put differently,
\begin{equation}
\K_t = \K^2 =
 \K 1 \oplus \K e, \,  e^2 = t e, \quad \mbox{ whence } \quad
 \K_t \cong \K[X]/ (X^2 - tX) .
 \end{equation}
 By general argments, or
by direct computation, it may be proved that the source $\alpha(u + ev)=u$ and the target
$\beta(u+ev)=u+tv$ and the unit map $1(x)=x+0e$
are indeed ring homomorphisms.
\end{proof}

\nin
Note also that, as rings,  in the special cases $t=0$ and $t=1$, we get
\begin{align*}
\K_0^\sfone &\cong \K[X]/(X^2) \mbox{ (``dual numbers'') }, \\
\K_1^\sfone  & \cong \K[X]/(X^2 - X) \cong \K[X]/(X) \times \K[X]/(X-1) = \K \times \K .
\end{align*}
Again, we can iterate constructions by induction. 
The elements $t$ and $e$ from above will be denoted $t_1$ and $e_1$, and next we adjoin another element $e_2$ such that
$e_2^2 = t_2 e_2$. This gives us  a square of rings and (pairs of) ring homomorphisms
\begin{equation}\label{eqn:square}
\xymatrix{
    \K \oplus \K e_{1}\oplus \K e_2 \oplus \K e_{12}   \ar@<-0.5ex>[d]     \ar@<0.5ex>[d]  
    \ar@<-0.5ex>[rr]     \ar@<0.5ex>[rr]  & &    \K \oplus \K e_1    \ar@<-0.5ex>[d]
     \ar@<0.5ex>[d]     \\
   \K \oplus \K e_2    \ar@<-0.5ex>[rr]
     \ar@<0.5ex>[rr]    &  &     \K   } 
\end{equation}
with relations $e_1^2 = t_1 e_1$, $e_2^2 = t_2 e_2$, $e_{12}= e_1 e_2$, whence
$e_{12}^2 = t_1 t_2 e_{12}$.
In terms of truncated polynomial rings, the preceding diagram is isomorphic to
$$
\xymatrix{
    \K[X_1,X_2]/ (X_1^2 - t_1 X_1, X_2^2 - t_2 X_2)  \ar@<-0.5ex>[d]     \ar@<0.5ex>[d]  
    \ar@<-0.5ex>[rr]     \ar@<0.5ex>[rr]  & &    \K[X_1]/(X_1^2 - t_1 X_1)   \ar@<-0.5ex>[d]
     \ar@<0.5ex>[d]     \\
   \K [X_2]/(X_2^2 - t_2 X_2  )  \ar@<-0.5ex>[rr]
     \ar@<0.5ex>[rr]    &  &     \K   } 
$$
with its natural projections and injections.
Note that there is a natural ring isomorpism, the {\em flip}, exchanging $X_1$ and $X_2$ and $t_1$ and $t_2$ (as predicted by
Th.\  \ \ref{th:Schwarz})
$$
\tau : \K \oplus \K e_{1}\oplus \K e_2 \oplus \K e_{12} \to  \K \oplus \K e_{1}\oplus \K e_2 \oplus \K e_{12}, \quad
\tau(e_1) = e_2, \tau(e_2)= e_1
$$
For general $n$, we get a hypercube of rings and ring homomorphisms that
can be described by a  $\K$-basis $(e_A)_{A \in \cP(\sfn)}$, and relations as follows:
 for each $\ttt \in \K^n$ and  $A \in \cP(\sfn)$, let
$$
t_A := \prod_{i\in A} t_i , \quad t_\emptyset := 1 . 
$$
For a vertex $C$ of the natural hypercube $\cP(\sfn)$, we define
$\K_{\ttt}^{C;\sfn}$ to be  the  free $\K$-module of rank
$2^{\vert C \vert}$, with $\K$-basis $(e_A)_{A \in \cP(C)}$, and ring structure defined by relations 
\begin{equation*}
\K_{\ttt}^{C;\sfn} = \bigoplus_{A \in \cP(C)} \K e_A, \qquad
e_A \cdot e_B = t_{A\cap B}  \cdot e_{A \cup B} 
\end{equation*}
(in particular, $e_A \cdot e_B = e_{A \cup B}$ if $A \cup B = \emptyset$).
Source and target maps corresponding to an edge $B\subset C$ with $C = B \cup \{ k \}$ are defined by
$\alpha , \beta : \K_{\ttt}^{C;\sfn} \to \K_{\ttt}^{B;\sfn} $,  where
\begin{align*}
\alpha(\sum_{A \subset C} v_A e_A ) &= \sum_{A \subset B} v_A e_A,\\
\beta(\sum_{A \subset C}  v_A e_A ) &= \sum_{A \subset B} v_A e_A + t_k
\sum_{A \subset B} v_{A \cup \{ k \}} e_{A \cup \{ k \}} .
\end{align*}
Then the hypercube of rings $(\K_{\ttt}^{C;\sfn})_{C \in \cP(\sfn)}$ with its source and target morphisms arises by $n$-fold
iteration of the construction from Lemma \ref{la:firstderivedring}. 
There is also a hypercube of {\em natural inclusions} (the unit sections from the groupoid setting), since
an inclusion $C \subset D$ induces an inclusion $\cP(C) \subset \cP(D)$.
The following special cases deserve attention:
if $t_i=1$ for all $i$, we get the {\em idempotent ring} with relation $e_A \cdot e_B = e_{A \cup B}$ for all
$A,B \in \cP(C)$, which in fact is isomorphic to a direct product of $2^{\vert C \vert}$ copies of $\K$.
In the ``most degenerate'' case $t_i=0$ for all $i$, we get the {\em $n$-th order tangent ring} $T^n \K$ used extensively in \cite{Be08, BeS14},
with relation $e_A \cdot e_B = 0$ whenever $A \cap B \not= \emptyset$. 
This is a hypercube of  {\em Weil algebras} in the sense of
\cite{KMS93, BeS14} 
(the ideal, kernel of $\alpha$ or $\beta$, is nilpotent), whereas for invertible $t_i$ the algebras are never Weil algebras. 
Therefore we propose the following concept,
replacing  the notion of Weil algebra in our context: 

\begin{definition}
A {\em cubic ring} (of order $n$) $\bA$ is given by a family of rings and ring morphisms: 
 for each vertex $A$ of the hypercube $\cP(\sfn)$,  there is a (unital, commutative) ring (``vertex ring'') $\bA^A$,
and, for every edge $(B,A)$ of the hypercube,  two ring morphisms (``edge projections'') 
$\alpha^{B,A},\beta^{B,A}: \bA^A \rightrightarrows \bA^B$,
and a ring morphism section $1^{A,B}:\bA^B \to \bA^B$ of both of them, 
such that for each face of the hypercube, the obvious diagrams of morphisms commute.
\end{definition}

\nin
The preceding discussion is summarized by

\begin{theorem}
If $(\K,m,a)$ is a good topological ring and $\ttt \in \K^n$, then $\bA :=\K_\ttt := (\K^{A,\sfn}_{\ttt})_{A \in \cP(\sfn)}$
is a cubic ring.
Every vertex ring is again a good topological ring.
\end{theorem}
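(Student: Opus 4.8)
The plan is to assemble the result from the pieces already in place, treating it as a bookkeeping statement rather than a new construction. The hypercube of rings $(\K^{A,\sfn}_{\ttt})_{A\in\cP(\sfn)}$ was built in the preceding discussion by $n$-fold iteration of Lemma \ref{la:firstderivedring}; so the first step is simply to observe that the explicit relations $e_A\cdot e_B = t_{A\cap B}\, e_{A\cup B}$ together with the edge morphisms $\alpha,\beta$ and the unit inclusions give, by construction, precisely the data demanded in the definition of a cubic ring. The only thing to check is that the required diagrams commute on each face; since $\alpha$ and $\beta$ are the $n$-fold iterates of the single-step source and target maps, which were shown in Lemma \ref{la:firstderivedring} to be ring homomorphisms, the face-commutativity is exactly the statement that the double-groupoid (here: double-ring) compatibilities hold, and these are inherited step by step from the one-variable case. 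So I would phrase the first half of the proof as: ``This is a restatement of the construction preceding the theorem; by Lemma \ref{la:firstderivedring} and induction, each $\K^{A;\sfn}_\ttt$ is a unital commutative ring, the edge maps $\alpha^{B,A},\beta^{B,A}$ are ring morphisms, the unit sections are ring morphisms splitting them, and the face diagrams commute because they do so at each single step.''

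For the second assertion, that every vertex ring is again a good topological ring, I would argue as follows. A vertex ring $\K^{C;\sfn}_\ttt$ is, as a $\K$-module, the free module $\bigoplus_{A\subset C}\K e_A$ of finite rank $2^{|C|}$, so it carries the product topology, which makes addition, negation and multiplication continuous (multiplication is polynomial in the coordinates, with coefficients among the fixed scalars $t_i$, hence continuous). Thus $\K^{C;\sfn}_\ttt$ is a topological ring. It remains to see that its unit group is dense. Here I would proceed by induction on $|C|$, using Lemma \ref{la:firstderivedring}: the one-step extension is $\K^{C} \cong \K^{B}[X]/(X^2 - t_k X)$ for $C = B\cup\{k\}$, i.e. $\K^C = \K^B \oplus \K^B e_k$ with $e_k^2 = t_k e_k$. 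An element $u + v e_k$ is invertible in $\K^C$ as soon as $u$ and $u + t_k v$ are invertible in $\K^B$ (one checks this directly, or notes that $\alpha$ and $\beta$ together detect invertibility because the kernel ideal is contained in the Jacobson-type radical — more concretely, $X(X-t_k)$ splits the two factors). Given any $u + v e_k$, density of $(\K^B)^\times$ in $\K^B$ lets us approximate $u$ by some $u'$ with $u'$ invertible, and then approximate $v$ by some $v'$ with $u' + t_k v'$ also invertible (again using density in $\K^B$, keeping $u'$ fixed); then $u' + v' e_k$ is invertible and close to $u + v e_k$. This gives density of $(\K^C)^\times$ in $\K^C$, completing the induction, the base case $|C| = 0$ being the hypothesis on $\K$.

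I would then close by remarking that continuity of the structure maps $\alpha,\beta,1$ is immediate since they are $\K$-linear maps between finite free modules, so the cubic ring is in fact a ``topological cubic ring'' in the evident sense, which is what makes it usable as a scalar-extension target in Theorem \ref{th:scalarextension}.

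The main obstacle, and the only point that needs genuine (if short) care, is the density-of-units step: one must verify that invertibility in the extension $\K^B \oplus \K^B e_k$ is governed by invertibility of the two images under $\alpha$ and $\beta$, and that one can perturb the two coordinates \emph{independently and successively} to land in the unit group while staying close to a prescribed element. Everything else — the ring axioms, the homomorphism property of the edge maps, the commuting faces, the continuity of all maps — is either already proved in Lemma \ref{la:firstderivedring} or follows formally by the induction that defines the hypercube, and I would not spell those routine verifications out in detail.
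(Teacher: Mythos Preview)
The paper offers no proof here; the theorem is introduced by ``The preceding discussion is summarized by'' and left at that. Your treatment of the first assertion --- that the hypercube of rings with its edge morphisms and unit sections satisfies the cubic-ring axioms --- is exactly in this spirit and is correct: it is bookkeeping on top of Lemma~\ref{la:firstderivedring} and the inductive construction, and nothing more needs to be said.

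For the density of units you go well beyond the paper, and your inductive scheme and the invertibility criterion $u + v e_k \in (\K^C)^\times \Leftrightarrow u,\, u + t_k v \in (\K^B)^\times$ are both right. There is, however, a gap in the step ``approximate $v$ by some $v'$ with $u' + t_k v'$ invertible, using density in $\K^B$'': density of $(\K^B)^\times$ furnishes units arbitrarily close to the element $u' + t_k v$, but such a unit need not lie on the affine line $u' + t_k\,\K^B$, so you cannot in general solve back for a $v'$ close to $v$. Your argument is fine when $t_k \in \K^\times$ (solve $v' = t_k^{-1}(w - u')$) and when $t_k = 0$ (the second condition collapses to the first), but when $t_k$ is a non-zero non-unit the step is unjustified as written. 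Perturbing $u$ instead of $v$ does not obviously help either: keeping $v$ fixed, you would need $u$ in $(\K^B)^\times \cap \bigl((\K^B)^\times - t_k v\bigr)$ close to $u_0$, i.e., you would need finite intersections of translates of $(\K^B)^\times$ to remain dense --- which is strictly stronger than mere density of $(\K^B)^\times$. This follows at once if $(\K^B)^\times$ is also \emph{open} (as it is in essentially all the examples one has in mind), but that is not part of the paper's definition of ``good''. You should either add that hypothesis, supply a separate argument for it, or flag this as the point where the paper's terse ``summary'' glosses over real content.
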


\nin
One may say that the accent is shifted from an individual algebraic property (nilpotency of the ideal) to a ``social'' property of
algebras: algebras live in families structured by cubes; ideals live in families of two kinds (source and target kernels) and parametrized
by continuous parameters $\ttt$. 
Moreover, this family carries the structure of an $n$-fold groupoid, which is not mentioned in the definition of cubic ring.
The following ``main theorem'' says that this rich social structure encodes general structure of ``conceptual calculus on manifolds'':
the groupoids  $M^\sfn_\ttt$ can be interpreted as {\em scalar
extensions of $M$ from $\K$ to $\K_\ttt^\sfn$}.

\begin{theorem}[The scalar extension theorem]\label{th:scalarextension}
If $M$ is a smooth Hausdorff manifold over the good topological ring $\K$, then, for all $n \in \N$,
$\ttt \in \K^n$ and $A \in \cP(\sfn)$, the manifold
$M_{\ttt}^{A;\sfn}$ is {\em smooth over the ring $\K^{A;\sfn}_{\ttt}$}, and 
if $f:M \to N$ is smooth over $\K$, then
$f^{A;\sfn}_{\ttt}$ is {\em smooth over the ring $\K^{A;\sfn}_{\ttt}$}.
\end{theorem}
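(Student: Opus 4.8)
The plan is to prove this by induction on $n$, using as the engine the fact (already established) that $M^\sfn_\ttt$ is built by iterating the functor $\sett{1}_t$, together with Lemma \ref{la:principle} applied not to sets or groups but to \emph{modules over a ring}. The key conceptual point is that $\sett{1}_t$, being product preserving, sends the action of a ring $R$ on a module $P$ (a ring morphism $R \to \End(P)$, or equivalently the scalar multiplication map $R \times P \to P$ satisfying the module diagrams) to an action of $R^\sfone_t = R_t$ on $P^\sfone_t = P_t$; and because $\sett{1}_t$ is applied compatibly to $\K$ itself, the resulting ring is exactly $\K^\sfone_t$ from Lemma \ref{la:firstderivedring}. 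Concretely, for $U$ open in a $\K$-module $V$, one first checks that $U_t \subset V_t = V^\sfone_t$ is open for the natural topology, and that smoothness of $f:U \to U'$ over $\K$ gives smoothness of $f_t$ over $\K_t$: the difference quotient $(f_t)^{[1]}$ over $\K_t$ can be written in terms of $f^{[2]}_{(t,\cdot)}$, which is continuous since $f$ is $C^2$ over $\K$ — and more generally all orders are controlled because $f$ is $C^\infty$.

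First I would establish the base case $n=1$ carefully. One must say what ``smooth over $\K_t$'' means: $\K_t$ is again a good topological ring (stated in the excerpt), $V_t$ is a topological $\K_t$-module, $U_t$ is open in it, and one must exhibit the extended difference-quotient maps $(f_t)^{[k]}_{\K_t}$ as continuous for all $k$. The cleanest route is: for invertible $t$ the anchor isomorphism $\Upsilon$ of Theorem \ref{th:anchor} identifies everything with the pair-groupoid picture, so $f_t$ is conjugate to $f\times f$, and smoothness over $\K_t$ is immediate; then by the density principle (Lemma \ref{la:density}) the required continuous extensions persist as $t \to 0$, where one recovers the tangent bundle $TV$ with its $\K_0 = \K[X]/(X^2)$-module structure — which is exactly the classical statement that $TM$ is a manifold over the dual numbers (cf.\ \cite{Be08}). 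For general $\ttt$ and general vertex $A$, the module structure on $V^{A;\sfn}_\ttt$ over $\K^{A;\sfn}_\ttt$ comes from applying the chain of functors $\sett{1}_{t_1}, \ldots, \sett{n}_{t_n}$ to the scalar multiplication diagram of $V$ over $\K$; since each functor in the chain is product preserving (for fixed $t_i$), Lemma \ref{la:principle} propagates the module-over-ring relationship at each step, and one only has to track which vertex of the resulting hypercube one lands on, which is bookkeeping already done in Theorem \ref{th:doubledouble} and its proof.

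The inductive step is then formal: assuming the statement for $n$, apply $\sett{n+1}_{t_{n+1}}$. By the induction hypothesis $M^{A;\sfn}_{(t_1,\ldots,t_n)}$ is smooth over $R := \K^{A;\sfn}_{(t_1,\ldots,t_n)}$, and $R$ is again a good topological ring; by the $n=1$ case applied over $R$ in place of $\K$, $(M^{A;\sfn}_{(t_1,\ldots,t_n)})_{t_{n+1}}$ is smooth over $R_{t_{n+1}}$, and $R_{t_{n+1}} = \K^{A';\mathsf{n+1}}_\ttt$ by the iterated construction of Lemma \ref{la:firstderivedring} (this is precisely the hypercube-of-rings description in the paragraph preceding the definition of cubic ring). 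The statement about $f^{A;\sfn}_\ttt$ being smooth over $\K^{A;\sfn}_\ttt$ follows in parallel, since $f_t$ is a morphism over $\K_t$ at each stage (Theorem \ref{th:topcal}), and the chain rule $(g\circ f)^\sfn = g^\sfn \circ f^\sfn$ ensures the construction passes to the manifold level by gluing charts. The main obstacle, and the only place real work is needed, is the $n=1$ statement over a \emph{general} good topological ring (not just $\R$): one must verify that ``smooth over $\K$'' implies ``smooth over $\K_t$'' at \emph{every} order $k$, i.e.\ that $(f_t)^{[k]}_{\K_t}$ extends continuously — this is where one needs to unwind the relation between the $\K_t$-difference quotients of $f_t$ and the $\K$-difference quotients $f^{[j]}$ of $f$ (roughly, a $\K_t$-slope of order $k$ is a $\K$-slope of order up to $2k$ with some of the scalar increments set equal to $t$), and confirm that the density principle applies uniformly. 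Once that lemma is in hand, the rest is induction and bookkeeping.
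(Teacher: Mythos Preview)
Your proposal is correct and follows essentially the same route as the paper: induction on $n$ driven by Lemma~\ref{la:principle} (product-preservation of $\sett{1}_t$), with the $n=1$ step establishing smoothness over $\K_t$ and the inductive step applying $\sett{n+1}_{t_{n+1}}$ over the already-obtained ring $\K^{A;\sfn}_\ttt$. The paper's own proof is in fact just a one-line reference to \cite{Be08}, Theorems~6.2 and~7.2 (the case $\ttt=0$), so your write-up is considerably more explicit---in particular your identification of the ``real work'' (relating $\K_t$-slopes of $f_t$ to higher $\K$-slopes of $f$) and your use of the density principle are exactly the ingredients that the cited argument unpacks.
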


\begin{proof}
The arguments, again by induction
 based on Lemma \ref{la:principle}, are verbatim the same as those proving \cite{Be08}, Theorems
6.2 and 7.2 (which concern the case $\ttt = (0,\ldots,0)$ and $M^{\sfn;\sfn}_{0} = T^n M$, the $n$-th order tangent
bundle). 
\end{proof}

\subsection{Consequences}
The preceding theorem is a central result:
as said in the introduction to \cite{Be08}, that work arose from working out all consequences of Theorems
6.2 and 7.2 from loc.\ cit. 
In a similar way, the consequences of Theorem \ref{th:scalarextension} might also fill a whole book. 
Therefore I will stop here a description of the formal theory,  and try instead to give an overview over some topics that could be
part of the contents of that book.
The main strands of \cite{Be08}, approached via the scalar extension point of view, and interwoven with each other, are
{\em connection theory} and {\em Lie theory}.  
I will give some comments on these two topics, from the point of view of ``Lie calculus'' as advocated here.
Before doing so, I'd like to stress once again that the theory will cover both the infinitesimal and the local, or even global, description
differential geometric objects.  This is new even in the classical setting of real, finite-dimensional
manifolds: the object encoding infinitesimal geometry, the tangent bundle $TM$, and the one encoding local or global
information, the pair groupoid $\pG(M)$, are both classical, but -- apart from Connes' tangent groupoid (cf.\ comments
on def.\ \ref{def:tangentgroupoid}) -- there has been no theory putting them into a common framework.

\subsubsection{Lie Theory}\label{ssec:Lietheory}
The heart of Lie Theory is the {\em Lie group-Lie algebra correspondence}.
In \cite{Be08}, several independent definitions of the Lie bracket of a Lie group $G$ are given: one may start with the
{\em Lie bracket of vector fields}, and use it to define the Lie algebra $\g$ via left- or right invariant vector fields, or go the other
way round and define the Lie bracket via
a {\em group commutator} $[g,h]=ghg^{-1}h^{-1}$ in the {\em second} tangent group $TTG$. 
In both cases, the stage is set by {\em second order calculus}: at first order, we do not yet ``see'' the group structure of $G$, but
only its first approximation which is in fact given by the canonical groupoid law of the underlying space.
To prove the Jacobi identity, computations involve {\em third order} calculus.
In \cite{Be08}, this is pushed further to analyze the group structure
of all higher order tangent bundles $T^n G$ (see also \cite{V13} for the structure of the jet bundle $J^nG$).

\ssk
To a large extent, all  this perfectly carries over to the groups $T^n G$ replaced by $G^\sfn_\ttt$. 
One of the main ingredients from the infinitesimal theory,
the {\em vertical bundle} $VM$ sitting inside $TTM$ and forming a sequence (cf.\ \cite{Be08}, eqn. (7.8))
\begin{equation}\label{eqn:vertical}
\begin{matrix}
TM \cong VM & \to  & TTM & \to & (TM \times_M TM) \, ,
\end{matrix}
\end{equation}
is generalized and ``conceptualized'' by the {\em core structure}: the {\em core of a double groupoid} (cf.\ \cite{BrMa92}) has 
a higher dimensional analog which has a nice description in terms of our cubic rings $\K^\sfn_\ttt$:

\begin{definition}
For subsets $\emptyset \not= B \subset C \subset \sfn$, consider the $(\vert C \vert - \vert B \vert)$-hypercube
$$
\cP_B^C(\sfn):= \{ A \in\cP(\sfn) \mid \, B \subset A \subset C  \} 
$$
which corresponds to the hypercube of ideals in the  vertex algebra $\K^{C,\sfn}_\ttt$ given by
$$
I_B^C(\K^\sfn) :=\bigoplus_{A \in \cP_B^C(\sfn)} \K e_A .
$$
For fixed $B$,
the corresponding {\em $B$-core cube} is the  cubic ring $(\K \oplus I_B^C(\K^\sfn))_{C \in \cP_B^\sfn(\sfn)}$.
\end{definition}

\nin
The core cubes globalize to the manifold level, 
and thus define analogs of the sequence (\ref{eqn:vertical}), which can be used as ingredient to define
a version of the Lie bracket on the bundles $G^\sfn_\ttt$. 
Of course, it shall also be used to give a general and clean construction of the {\em Lie algebroid of a Lie
groupoid} in the present context (cf.\ \cite{SW15} for this item). 

\subsubsection{Connections}\label{ssec:connections}
Lie theory can be considered as part of {\em connection theory} -- but the converse could probably be justified as well, and
therefore I prefer to discuss these two topics independently of each other.
Indeed, there is a beautiful, but not very well known, approach to connections via {\em loop theory}, developed by
L.\ Sabinin in a long series of papers (cf.\ his monograph \cite{Sa99}).
This theory is algebraic in nature, and hence perfectly
 suited to be adapted to our framework. As Sabinin puts it (loc.\ cit., p. 5):
{\em Since we have reformulated the notion of an affine connection in a purely algebraic language, it is possible now to treat
such a construction over any field (finite if desired)... 
Naturally, the complete construction needs some non-ordinary calculus to be elaborated.}
I do think that the non-ordinary calculus he dreamt of exists now, and that
 nothing prevents us from following the plan outlined by this phrase.
Indeed,  I have been working on this topic for quite a while, and mainly for reasons of time the manuscript is not yet achieved.
To describe Sabinin's idea in a few words, adapted to the preceding notation:
when working with groupoids, one sometimes regrets that the product $\ast$ is not everywhere defined, and one 
 would like to work with some {\em everywhere defined} product.
 This is essentially what a
{\em connection on a groupoid} provides --  you just {\em have to give up associativity!} To be more precise, 
a connection on a groupoid $G$ corresponds to an everywhere defined ternary product $(a,b,c) \mapsto a \bullet_b c$ on 
$G$ extending, or ``integrating'', the not everywhere defined 
ternary groupoid product $a \ast b^{-1}\ast  c$, such that each binary product
$(a,c) \mapsto a\bullet_b c$ is a {\em loop}.
Indeed, when $M=U$ is open in a linear space $V$, then on $G = U^\sett{1}_t$ there
 is a natural ternary product of this kind, given by 
the  locally defined torsor structure
$(x,v) \bullet_{(x',v')} (x'',v'') = (x-x'+x'',v-v'+v'')$. It corresponds to the {\em canonical flat connection induced by $V$}.
This approach is very much in keeping with the one from Synthetic Differential Geometry (\cite{Ko10}), where 
{\em connections on groupoids} are defined in a similar way (retaining only the infinitesimal, not the local, information). 
For instance, if $G$ is a Lie group, then the globally defined torsor structure, and its opposite, on $G^2$ define two such  connections,
called the {\em canonical left and right connection of $G$}. 
Lie theory can be recast in this language: associativity corresponds to curvature freeness of these two connections, 
and so on. 
I believe that this algebraic approach not only is the most general possible, but also sheds new light on the {\em geometry of loops}
(in particular, their close link with {\em $3$-webs}, see \cite{AkS92, NS02, Sa99}).\footnote{ To add a personal note, 
I met Karl Strambach for the last time
 on the 50th Seminar Sophus Lie, when exposing these projects, and he was quite delighted
by the idea that these seemingly forgotten conceptions relating loops and differential geometry could be revived.}

\section{Perspectives}\label{sec:perspectives}

The preceding remarks on Lie and Connection Theory naturally lead to add some more comments on open problems and further
research topics. 

\subsection{Discrete {\sl versus} continuous}\label{ssec:discrete}
In the present text, basic definitions and results are given in the framework of topological calculus over
good topological rings (Def.\ \ref{def:CnK}), thus using topology and continuity,
whereas in \cite{Be15, Be15b}, I have put the accent on the possibility of developing the whole theory over {\em discrete} base rings,
that is, of developing a purely algebraic theory, applying, e.g., to $\K=\Z$, or even a {\em finite} ring.
Although I'm afraid the readability of these papers has suffered a bit under this extreme degree of generality, 
I do believe that in the long run this is an  important aspect: quantum theory suggests that the universe be discrete in nature, and
hence we would like to understand how calculus (one of our main tools when doing mathematical physics!) could be adapted to this
situation. The basic idea is very simple: just like, in algebra, a polynomial is a formal object, a ``space over $\K$'' will be a formal object, too,
not necessarily uniquely determined by its base set $M$, but rather by the whole bunch of information carried along by all its
``extensions'' $M^\sfn_\ttt$ for $n \in \N$, satisfying all the formal relations explained in this text. 
Likewise, a ``$\K$-smooth map'' between such objects is not necessarily determined by its underlying set-map $f:M \to M'$, but
by all its extensions $f^\sfn_\ttt$.
In topological differential calculus, the use of topology serves
  to store all this information in the base space $M$ and in the base map $f$ -- 
necessarily, we need an infinite ring (and an infinite unit group $\K^\times$) in order to extract this information, via the 
``density principle'' \ref{la:density}.
In the purely algebraic theory, this infinite information is explicitly given in an ``attached file'', allowing the base objects
$M$ and $f$ to be possibly finite.

\ssk
To a certain extent, this approach works very well, but of course it has its limits. These limits, in turn, may be starting points for new problems
and new challenges: for instance, we must first understand the formal properties of the local connections defined by Sabinin (see above,
\ref{ssec:connections}); geodesics and the {\em exponential jet} (\cite{Be08}, Chapter VI) cannot be defined by integrating 
differential equations, so we have to understand their formal structure; and it is quite a challenge to reformulate 
notions and results involving {\em volume}:  volume is a local or global property, which can make sense in a discrete space, but it
is not clear how this should be related to the infinitesimal theory.

\subsection{Full cubic calculus, positive characteristics, and the scaloid}
Understanding the relation between ``full cubic'' and ``symmetric cubic'' calculus (Section \ref{ssec:full-vs-symmetric}) 
becomes particularly important
in the case of {\em positive characteristic}, and for finite base rings. This can be seen by remembering that the classical
 {\em Taylor formula} involves terms $\frac{1}{k!}$, and hence does not carry over to the case of positive characteristic.
 However, the {\em general Taylor formula} from \cite{BGN04} does make sense over any base ring.
A closer inspection shows that this formula really belongs to {\em full cubic} calculus,  
and more precisely to the ``non-symmetric'' aspect of full calculus, which has been christianed in \cite{Be13}
{\em simplicial differential calculus}.
Thus, although symmetric cubic calculus can be defined over any base ring, it is sort of ``incomplete'' in certain cases (such as
finite rings). I believe that understanding what is going on here is important also for the general case. 

\ssk
Fortunately, all the specific difficulties of full cubic calculus concentrate in a single algebraic object, the
{\em scaloid} (cf.\ \cite{Be15b}): 
let us call {\em naked point} and denote by $0$ the zero-subspace of the zero-$\K$-module $\{ 0 \}$.
By definition, the {\em scaloid} is the family of $n$-fold groupoids $0^\sfn$, for $n\in \N$.
One should not think that $0^\sfn$ be trivial:
already $0^{[1]} = \K$ is not a trivial set, although 
$0^\sfone = (0^{[1]}, \K) = (\K,\K)$ is indeed trivial {\em as a groupoid}. 
But  $0^\sftwo$ is a non-trivial gropoid, and this argument shows that the theory of
$0^\sfn$ and of $\K^{\mathsf n-1}$ is essentially the same.
The abstract reason for the importance of $0^\sfn$ is that usual cartesian products should be seen as
fibered product over $0$, in formulas, $A \times B = A\times_0 B$, and our ``rule $\sfn$'' is compatible with
fibered products, rather than with plain cartesian products:
$(A \times_M B)^\sfn = A^\sfn \times_{M^\sfn} B^\sfn$, making it natural that $0^\sfn$ appears whenever we
work with cartesian products.
 Personally, I like to think of the scaloid as some kind of ``elementary particle'' that remained unobserved in the usual theories
-- such theories  are symmetric cubic in nature, and the symmetric cubic groupoid $0_\ttt^\sfn$ is indeed trivial as set
and as groupoid.


\subsection{General spaces, and relation with SDG}\label{ssec:SDG}
In \cite{MR91},  p. 1--3,  Moerdijk and Ryes
 give three main reasons for generalizing the ``ordinary'' theory by Synthetic Differential Geometry (SDG)
 (cf.\ also \cite{Be08}, Appendix G):
 \begin{enumerate}
\item  the category of smooth manifolds is not cartesian closed (spaces of mappings between manifolds are not always manifolds),
\item  the lack of finite inverse limits in the category of manifolds (in particular, manifolds can not have ``singularities''),
\item  the absence of a convenient language to deal explicitly and directly with structures in the ``infinitely small''.
\end{enumerate}
I claim that the theory started here allows to achieve the same goals by different means, and this in much greater generality
since models of SDG all use the real numbers in one way or another, whereas our theory does not use them. 
Indeed, a natural answer to
(3) is given by the scalar extension viewpoint explained above; as to (1) and (2), we have to
go beyond the framework of smooth manifolds. In our theory, there is a natural way to do this: 
kernels of morphisms of higher order groupoids $M^\sfn$, and quotients of them, are again higher order groupoids,
and hence one may single out some convenient (big) category of such higher order groupoids in order to describe
more general ``spaces''. 
Such a procedure remains in the framework of classical algebra and classical set-theory, whereas SDG tries to 
achieve these goals by very different methods (topos theory, using intuitionistic logic and avoiding the law of the excluded
third). 
However, it seems very well possible to combine the methods used here with those used in SDG in order to 
develop some kind of ``SDG over general base fields and -rings''.

\subsection{Non-commutative base rings, supersymmetry; left versus right}
It is intriguing to observe that the first order theory works perfectly well over arbitrary, possibly 
{\em non-commutative} base rings $\K$;
only at second and higher order level, commutativity of $\K$ is needed (cf.\ \cite{Be15}). 
So, what exactly is the obstruction for defining ``conceptual calculus over non-commutative base rings''? 
I don't know the answer, and very likely there is no theory admitting completely general non-commutative base rings.
However, I have the impression that {\em super-commutative} rings should be admissible: 
there should be a common framework including both
 ``conceptual super-calculus'' and ``conceptual calculus''.
However, in spite of several tries, I'm not yet sure about the form that such a theory should take.
My feeling is that super-calculus should arise from taking account of the fact that the definition of a groupoid is completely symmetric
in source $\alpha$ and target $\beta$: a groupoid and its opposite groupoid have, in principle, ``equal status''. 
To a certain extent, conceptual calculus is also symmetric in source $\alpha$ and target $\beta$.
And yet this symmetry must be broken at a certain point -- it is not quite clear when this point is reached, but 
it should be the bifurcation point  where ``usual'' and ``super'' calculus separate. 
Of course, our formulae somehow ``prefer'' the source $\alpha$ (having a very simple expression, 
whereas the one for $\beta$ 
in cubic calculus is extremely complicated; cf. \cite{Be15}), but that may be some accidental and not intrinsic feature.
It rather seems to me that this symmetry is not broken until we really use {\em mappings} as a tool, and work with the ``usual'' conventions
about them: they are binary relations having certain properties, and which their opposite relations do in general not have
(cf.\ example \ref{ex:bisections}). 
Thus the symmetry might possibly be restored by working with general binary relations, instead of
mappings: calculus and super-calculus might be different aspects of a single ``relational calculus''.
This may be less crazy than it sounds: it just would mean to take the groupoid point of view seriously.

\end{document}